\documentclass[10pt,reqno]{amsart}
\usepackage[backref=page]{hyperref}

\usepackage{graphicx}
\usepackage{cite}
\usepackage{amssymb}
\usepackage{amsfonts}
\usepackage[]{amsmath}
\usepackage[]{epsfig}
\usepackage{mathtools}
\usepackage[]{float}
\usepackage{setspace}
\usepackage{tikz}

\newtheorem{remark}{Remark}[section]

\renewcommand{\Im}{\operatorname{Im}}
\renewcommand{\Re}{\operatorname{Re}}
\numberwithin{equation}{section}
\newcommand{\R}{\mathbb R}

\usepackage[english, activeacute]{babel}
\usepackage{amsmath,amsthm,amsxtra}
\usepackage{epsfig}
\usepackage{amssymb}
\usepackage{latexsym}
\usepackage{amsfonts}
\usepackage{hyperref}
\usepackage{pdftricks}
\newtheorem{thm}{Theorem}[section]
\newtheorem{cor}[thm]{Corollary}
\newtheorem{lemma}[thm]{Lemma}
\newtheorem{lem}[thm]{Lemma}
\newtheorem{prop}[thm]{Proposition}

\theoremstyle{remark}

\numberwithin{equation}{section}

\newcommand{\la}{\lambda}

\newcommand{\al}{\alpha}

\def\bm{\left( \begin{array}{cc}}
\def\endm{\end{array}\right)}

\newcommand{\be}{\begin{equation}}
\newcommand{\ee}{\end{equation}}
\newcommand{\ba}{\left(\begin{array}{c}}
\newcommand{\ea}{\end{array}\right)}
\newcommand{\bea}{\begin{eqnarray}}
\newcommand{\eea}{\end{eqnarray}}
\newcommand{\bee}{\begin{eqnarray*}}
\newcommand{\eee}{\end{eqnarray*}}
\newcommand{\ben}{\begin{enumerate}}
\newcommand{\een}{\end{enumerate}}

\newcommand{\grada}{\langle \sqrt{\H_a} \rangle}

\newcommand{\ep}{\epsilon}

\newcommand{\qtq}[1]{\quad\text{#1}\quad}
\renewcommand{\H}{\mathcal{H}}

\begin{document}
\pagenumbering{arabic}	
\title[Threshold dynamics]{Threshold dynamics for the $3d$ radial cubic NLS with repulsive inverse-square potential}

\author[L. Baker]{Luke Baker}
\address{Department of Mathematics, University of Oregon}
\email{lukebake@uoregon.edu}

\author[L. Campos]{Luccas Campos}
\address{Department of Mathematics, Federal University of Minas Gerais}
\email{luccascampos@gmail.com}

\author[J. Murphy]{Jason Murphy}
\address{Department of Mathematics, University of Oregon}
\email{jamu@uoregon.edu}

\author[R. Scarpelli]{Renzo Scarpelli}
\address{Department of Mathematics, Federal University of Minas Gerais}
\email{renzoscb@ufmg.br}

\begin{abstract} We classify the dynamics of radial $H^1$ solutions at the (radial) ground state threshold for the $3d$ cubic NLS in the presence of a repulsive inverse-square potential. 
\end{abstract}

\maketitle

\section{Introduction}

We consider the three-dimensional cubic nonlinear Schr\"odinger equation in the presence of an inverse-square potential.  The equation takes the form
\begin{equation}\label{NLSa} 
\begin{cases}
i\partial_tu - \H_a u =- |u|^2u, \\
u|_{t=0}\in H^1(\R^3),
\end{cases}
\end{equation}
where  $\H_a := -\Delta+a|x|^{-2}$ and we impose $a>-\tfrac14$ in order to guarantee the positivity of this operator.  The Schr\"odinger operator $\H_a$ arises in a variety of physical settings and as an important scaling limit (see e.g. \cite{Burq2003, Kalf1975}). This model has become an important case study in recent years as a non-perturbative generalization of the standard cubic NLS.  It has the interesting feature of breaking the space-translation symmetry while preserving the scaling symmetry.  

Our contribution in this work is to characterize the dynamics of radial solutions at the radial ground state threshold in the setting of a \emph{repulsive} inverse-square potential (i.e. with $a>0$).  We state our main result as Theorem~\ref{T} below after some discussion of equation \eqref{NLSa} and several related results.

Equation \eqref{NLSa} is the Hamiltonian flow for the following \emph{energy}:
\[
E_a(u) = \tfrac12\| u\|_{\dot H_a^1}^2 - \tfrac14\|u\|_{L^4}^4,\quad \|u\|_{\dot H_a^1}:=\sqrt{\langle u,\H_a u\rangle}. 
\]
In particular, $E_a(u)$ is conserved along the flow for \eqref{NLSa}.  Additionally, the gauge-invariance of the nonlinearity guarantees the conservation of \emph{mass}, defined by
\[
M(u) = \|u\|_{L^2}^2. 
\]
We note that in the case $a=0$, equation \eqref{NLSa} reduces to the standard cubic NLS
\[
i\partial_t u + \Delta u = -|u|^2 u,\quad u|_{t=0}\in H^1(\R^3). 
\]

For $a\in(-\tfrac14,0]$, there exists a unique, nonnegative, radial solution (the \emph{ground state}) $Q_a$ to the nonlinear elliptic equation
\[
-Q_a - \mathcal{H}_a Q_a = - Q_a^3,
\]
which can be constructed as an optimizer for a Gagliardo--Nirenberg inequality adapted to $\mathcal{H}_a$ \cite{KVMZ, yang2026uniqueness}.  For $a>0$ optimizers do not exist in $H^1$, although one can restore compactness and construct an optimizer if one restricts to radial functions (cf. \cite{KVMZ}).  In this case we continue to denote the optimizer by $Q_a$. 

The ground state plays a central role in the classification of dynamics for solutions to \eqref{NLSa}.  For solutions below the mass-energy of the ground state, there is a \emph{scattering/blowup dichotomy}.  We recall that \emph{scattering} refers to asymptotically linear dynamics, which in the case of \eqref{NLSa} is equivalent to finiteness of the $L_{t,x}^5$-norm on $\R\times\R^3$.  Writing $x\wedge y$ for $\min\{x,y\}$, one has the following result: 

\begin{thm}[Dynamics below the threshold, \cite{HolmerRoudenko2008, DuyckaertsHolmerRoudenko2008,KVMZ}]\label{T:KMVZ} Let $a>-\tfrac14$ and define
\[
\mathcal{E}_a=M(Q_{a})E_{a}(Q_{a}) \qtq{and} \mathcal{K}_a = \|Q_{a}\|_{L^2}\|Q_{a}\|_{\dot H^1_{a}}.\footnote{The values $\mathcal{E}_a$ and $\mathcal{K}_a$ can be defined purely in terms of the sharp constant in the Gagliardo--Nirenberg inequality.  In particular, Theorem~\ref{T:KMVZ} does not depend on the uniqueness of the ground state. For the problem of classifying threshold dynamics, on the other hand, the uniqueness of the ground state plays an important role.}
\]
Let $u_0\in H^1(\R^3)$ satisfy $M(u_0)E_a(u_0)<\mathcal{E}_{a\wedge 0}$.
\begin{itemize}
\item[(i)] If $\|u_0\|_{L^2}\|u_0\|_{\dot H_a^1} < \mathcal{K}_{a\wedge 0}$, then the solution to \eqref{NLSa} with initial data $u_0$ is global and scatters.
\item[(ii)] If $\|u_0\|_{L^2}\|u_0\|_{\dot H_a^1} > \mathcal{K}_{a\wedge 0}$ and $u_0$ is radial or $xu_0\in L^2$, then the solution to \eqref{NLSa} with initial data $u_0$ blows up in finite time in both time directions.
\end{itemize}

For $a>0$, suppose $u_0\in H_{\text{rad}}^1$ satisfies $M(u_0)E_a(u_0)<\mathcal{E}_a$. Then the analogues of (i) and (ii) hold with $\mathcal{K}_{a\wedge 0}$ replaced with $\mathcal{K}_a$
 \end{thm}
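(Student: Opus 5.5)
The plan follows the concentration-compactness/rigidity scheme of Kenig--Merle, as adapted to the inverse-square setting in \cite{KVMZ}, together with the virial argument of \cite{HolmerRoudenko2008, DuyckaertsHolmerRoudenko2008} for blowup. The first step is a variational analysis. The sharp Gagliardo--Nirenberg inequality
\[
\|f\|_{L^4}^4\le C_a\|f\|_{L^2}\|f\|_{\dot H^1_a}^3,
\]
holds with $C_a = C_{a\wedge 0}$ for general $f$ when $a>0$ (optimizers escape to spatial infinity), and with $C_a$ attained by $Q_a$ when we restrict to radial $f$. Using the Pohozaev identities for $Q_a$, we express $\mathcal{E}_a$ and $\mathcal{K}_a$ through $C_a$.

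With this in hand, we study $g(y)=\tfrac12 y^2-\tfrac{C}{4}y^3$, applied to $y=\|u\|_{L^2}\|u\|_{\dot H^1_a}$. From $M(u)E_a(u)\ge g(\|u\|_{L^2}\|u\|_{\dot H^1_a})$, the condition $ME<\mathcal{E}$ and continuity of the flow, we obtain the usual trapping: the sign of $\|u\|_{L^2}\|u\|_{\dot H^1_a}-\mathcal{K}$ is preserved on the lifespan. Moreover, the gap is quantitative. There is $\delta>0$ such that
\[
\|u(t)\|_{L^2}\|u(t)\|_{\dot H^1_a}\le(1-\delta)\mathcal{K} \quad\text{or}\quad \ge(1+\delta)\mathcal{K},
\]
and the virial functional $\|u\|_{\dot H^1_a}^2-\tfrac34\|u\|_{L^4}^4$ is bounded below by $c\|u\|_{\dot H^1_a}^2$ in case (i) and is at most $-c<0$ in case (ii). Since $M$ and $E_a$ are conserved, this also gives a uniform $H^1$ bound in case (i), and hence global existence.

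For part (ii) we use the localized virial identity
\[
\tfrac{d^2}{dt^2}\int \phi|u|^2 = 8\Big(\|u\|_{\dot H^1_a}^2-\tfrac34\|u\|_{L^4}^4\Big)+\text{errors},
\]
which has the same form as for NLS because the potential $a|x|^{-2}$ is homogeneous of degree $-2$. If $xu_0\in L^2$, we take $\phi=|x|^2$; there are no errors, and the strictly negative bound forces finite-time blowup in both directions. In the radial case, we truncate $\phi$ at radius $R$ and control the tail via the radial Strauss inequality. Here the upper bound $-c$ must dominate the errors $O(R^{-2}+R^{-1}\|u\|_{\dot H^1}^2)$. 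We handle this as in \cite{HolmerRoudenko2008}, using the quantitative gap to absorb the $\|u\|_{\dot H^1}$ growth.

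Part (i) is the main obstacle, and we follow \cite{KVMZ}. We have local well-posedness and stability theory in $\dot H^1_a$ via Strichartz estimates for $e^{-it\H_a}$ (Burq--Planchon--Stalker--Tahvildar-Zadeh), and a linear profile decomposition adapted to $\H_a$. The key input is the convergence of operators $\H_a^{x_n}\to-\Delta$ when profiles have translation parameters $|x_n|/\lambda_n\to\infty$; these profiles are approximated by solutions of the standard cubic NLS. That is exactly why the threshold becomes $\mathcal{K}_{a\wedge 0}$: for $a>0$, escaping profiles see the free NLS, whose ground state lies below the radial $Q_a$. In the radial case no translations occur, so the threshold $\mathcal{K}_a$ suffices. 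Assuming scattering fails below the threshold, we extract a minimal non-scattering solution with precompact orbit modulo scaling (and modulo translation in the nonradial case with $a\le 0$). We then rule it out with a truncated virial argument, using the coercivity bound $c\|u\|_{\dot H^1_a}^2$ and compactness to control tails. When translations are present, we additionally need a momentum/Galilean argument to show that the translation parameter stays bounded. The hardest step is the nonlinear embedding of the escaping profiles; there we would rely on the convergence of the associated linear propagators in Strichartz norms established in \cite{KVMZ}.
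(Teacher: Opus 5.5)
Your outline is correct and matches the arguments of \cite{HolmerRoudenko2008, DuyckaertsHolmerRoudenko2008, KVMZ}, from which the paper quotes this theorem without proof. In the radial blowup argument, note that a constant upper bound $-c$ is not enough to absorb the error $R^{-1}\|u\|_{\dot H^1}^2$; you need the quadratic bound $8\|u\|_{\dot H^1_a}^2-6\|u\|_{L^4}^4=24E_a(u)-4\|u\|_{\dot H^1_a}^2\le -c\|u\|_{\dot H^1_a}^2$ (after normalizing the mass), which your quantitative gap supplies. Also, the $H^1$ profile decomposition has no scaling parameter, and for $a\neq 0$ the critical element is precompact in $H^1$ with no translation at all, because escaping profiles lie strictly below the free-NLS threshold and scatter via the embedding. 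So the momentum/Galilean step is needed only for $a=0$, where Galilean invariance is actually available.
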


This result was established first for the standard cubic NLS \cite{HolmerRoudenko2008, DuyckaertsHolmerRoudenko2008}; the case of the inverse-square potential was addressed later in the work \cite{KVMZ}.  Note that for $a\leq 0$, the ground state solution $e^{it}Q_a$ is an example of a global nonscattering solution.  Thus it is clear that $M(Q_a)E_a(Q_a)$ is the correct mass-energy threshold for obtaining a simple scattering/blowup dichotomy.  Theorem~\ref{T:KMVZ} further asserts that for $a>0$ the correct threshold for such a dichotomy (for general $H^1$ data) is $M(Q_0)E_0(Q_0)$\footnote{In fact, by considering data of the form $[1-\tfrac{1}{n}]Q_0(\cdot-x_n)$ with $|x_n|\to\infty$, one can produce (nonradial) solutions $u_n$ satisfying $M(u_n)E_a(u_n)\nearrow \mathcal{E}_0$ and $\|u_n(0)\|_{L^2}\|u_n(0)\|_{\dot H^1}\nearrow \mathcal{K}_0$ such that $\|u_n\|_{L_{t,x}^5}\to\infty$ (see \cite[Theorem~1.5]{KVMZ}).}\label{footnoteref}. On the other hand, if one restricts to \emph{radial} solutions, one obtains the larger mass-energy threshold $M(Q_a)E_a(Q_a)$. 

With Theorem~\ref{T:KMVZ} in place, it is natural to investigate the dynamics of solutions living exactly at the ground state threshold.  In this case, new dynamics are possible.  In particular, in addition to the ground state solution itself, there exist heteroclinic orbits connecting the ground state to scattering/blowup.  Using these special solutions, the possible solution behaviors were characterized in \cite{DR, yang2026uniqueness} in the regime $a\leq 0$:

\begin{thm}[Threshold dynamics, $a\leq 0$, \cite{DR, yang2026uniqueness}]\label{T:dynamics1} Let $a\in(-\tfrac14+\frac{1}{9},0]$.  

\textbf{Heteroclinic orbits.} There exist radial solutions $Q_a^\pm$ to \eqref{NLSa} defined on $I^\pm\supset[0,\infty)$ with $M(Q_a^\pm)E_a(Q_a^\pm)=\mathcal{E}_a$ that converge exponentially to $Q_a$ in $H^1$ as $t\to\infty$.  \begin{itemize}
\item $Q_a^-$  satisfies $\|Q_a^-(0)\|_{\dot H_a^1}<\|Q_a\|_{\dot H_a^1}$ and scatters backward in time.  
\item  $Q_a^+$ satisfies $\|Q_a^+(0)\|_{\dot H_a^1}>\|Q_a\|_{\dot H_a^1}$ and blows up in finite negative time. 
\end{itemize}

\textbf{Classification of dynamics.} Suppose $u_0\in H^1$ satisfies $M(u_0)E_a(u_0)=\mathcal{E}_a$, and let $u$ be the solution to \eqref{NLSa} with $u|_{t=0}=u_0$. 
\begin{itemize}
\item[(i)] If $\|u_0\|_{L^2}\|u_0\|_{\dot{H}^1_a} < \mathcal{K}_a$, then $u$ either scatters in both time directions or $u = Q_a^-$ up to symmetries\footnote{When we say two solutions $u$ and $v$ agree \emph{up to symmetries}, we mean that there exist $\theta\in\R$, $\lambda\in(0,\infty)$, and $t_0\in\R$ such that
\[
u(t,x) = e^{i\theta}\lambda v(\lambda^2(t-t_0),\lambda x)\qtq{or} u(t,x) = e^{i\theta}\lambda \bar v(\lambda^2(t_0-t),\lambda x).
\]
 In the case $a=0$, we additionally include a spatial translation parameter in the formulas above. 
}.
\item[(ii)] If $\|u_0\|_{L^2}\|u_0\|_{\dot{H}^1_a} = \mathcal{K}_a$, then $u(t) = e^{it}Q_a$ up to symmetries. 
\item[(iii)] If $\|u_0\|_{L^2}\|u_0\|_{\dot{H}^1_a} >\mathcal{K}_a$ and $u_0$ is radial or $xu_0 \in L^2(\mathbb{R}^3)$, then $u$ either blows up in finite time in both time directions or $u = Q_a^+$ up to symmetries.
\end{itemize}
\end{thm}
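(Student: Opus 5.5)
The plan follows the Duyckaerts--Merle / Duyckaerts--Roudenko scheme, adapted to $\H_a$ with $a\le 0$.

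\textbf{Variational input.} First I would record the variational facts at the threshold. Uniqueness of $Q_a$ up to phase and scaling, which is where \cite{yang2026uniqueness} enters for $a<0$, gives a modulational stability statement. If $M(u)=M(Q_a)$, $E_a(u)=E_a(Q_a)$ (after rescaling) and
\[
\delta(u):=\bigl|\|u\|_{\dot H^1_a}^2-\|Q_a\|_{\dot H^1_a}^2\bigr|
\]
is small, then there exist $\theta,\lambda$ with $\|e^{-i\theta}\lambda^{-1/2}u(\lambda^{-1}\cdot)-Q_a\|_{H^1}\lesssim\delta(u)$. This is proved by compactness plus uniqueness, followed by a linearization argument.

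\textbf{Spectral analysis and the special solutions.} Next I would study the linearized operator $\mathcal L$ around $e^{it}Q_a$. I need its coercivity on the orthogonal complement of the generalized kernel, and the existence of real eigenvalues $\pm e_0$ with eigenfunctions $\mathcal Y_\pm$. These are obtained via the standard argument: $L_+$ has exactly one negative eigenvalue and $L_-\ge0$. The nondegeneracy $\ker L_+=\mathrm{span}\{\Lambda Q_a\}$ in the radial class is the delicate input. I expect the restriction $a>-\frac14+\frac19$ to be exactly what is needed for the decay and regularity of $Q_a$ and of the eigenfunctions near the origin, so that the Strichartz and fixed-point estimates close. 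Given these, I would construct approximate solutions
\[
U_k=Q_a\pm e^{-e_0t}\mathcal Y_++\dots
\]
and obtain $Q_a^\pm$ by a contraction argument on $[t_k,\infty)$ in exponentially weighted Strichartz spaces. Scattering backward for $Q_a^-$ then follows from Theorem~\ref{T:KMVZ}(i) applied to the evolution away from $Q_a$, using the sign of $\|Q_a^-(0)\|_{\dot H^1_a}-\|Q_a\|_{\dot H^1_a}$. Finite-time blowup for $Q_a^+$ follows by a virial argument; I would first establish finite variance or use a localized radial virial.

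\textbf{Classification.} Case (ii) is immediate from the variational characterization, since equality in Gagliardo--Nirenberg forces $u_0$ to be an optimizer. For (i), suppose $u$ fails to scatter forward. The concentration-compactness argument of \cite{KVMZ} then yields a solution that is precompact modulo scaling; there is no translation parameter, since $\H_a$ breaks translations when $a\neq0$. A localized virial identity combined with the modulation estimate then gives
\[
\int_t^{\infty}\delta(u(s))\,ds\lesssim\delta(u(t)).
\]
From this I would deduce exponential convergence $\delta(u(t))\lesssim e^{-ct}$, and also that the scaling parameter converges. In case (iii) the radial or finite-variance assumption yields the same estimate directly through the virial identity, provided the solution does not blow up in finite time.

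\textbf{Uniqueness step, the main obstacle.} The remaining task is to show that any solution converging exponentially to $e^{it}Q_a$ equals $Q_a^\pm$ or $Q_a$ up to symmetries. I would write $u=e^{it}(Q_a+h)$ and iterate the linearized equation using coercivity. This shows $h=Ae^{-e_0t}\mathcal Y_++O(e^{-(e_0+)t})$, and then compares $u$ with $Q_a^{\mathrm{sgn}A}$ after a time shift. I expect this step, together with the precompactness of the non-scattering threshold solution in the radial setting, to be the main difficulty. Both the singular potential in Strichartz estimates near the origin and the spectral nondegeneracy for $\H_a$ must be handled there, and this is where I would expect the condition $a>-\frac14+\frac19$ to enter.
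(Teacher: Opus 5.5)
Your overall architecture matches the scheme the paper follows for its $a>0$ analogue (the $a\le 0$ statement itself is only cited): variational stability, spectral analysis of $\mathcal{L}$, construction of $U^A$ by contraction, compactness and virial estimates for threshold solutions, and uniqueness of exponentially converging solutions. However, two steps fail as written.

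\textbf{Behavior of $Q_a^\pm$ in negative time.} Backward scattering of $Q_a^-$ cannot be obtained from Theorem~\ref{T:KMVZ}(i). That theorem requires $M(u_0)E_a(u_0)<\mathcal{E}_a$ strictly, whereas $M(Q_a^-)E_a(Q_a^-)=\mathcal{E}_a$. There is also no a priori uniform gap $\sup_{t\le 0}\|Q_a^-(t)\|_{L^2}\|Q_a^-(t)\|_{\dot H^1_a}<\mathcal{K}_a$ that could substitute for it, and the sign of $\|Q_a^-(0)\|_{\dot H^1_a}-\|Q_a\|_{\dot H^1_a}$ only gives globality. You must run the threshold argument instead. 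Suppose $Q_a^-$ failed to scatter backward. Threshold compactness applied backward, together with forward convergence to $Q_a$, makes the whole orbit precompact. The virial bound $\int_{t_1}^{t_2} d(u(t))\,dt\lesssim d(u(t_1))+d(u(t_2))$ then holds for all $t_1<t_2$, with $d\to 0$ at $\pm\infty$. This forces $d(Q_a^-(t))\equiv 0$, contradicting $\|Q_a^-(t)\|_{\dot H^1_a}<\|Q_a\|_{\dot H^1_a}$.

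The same threshold issue affects $Q_a^+$. Since $\tfrac{d}{dt}P_\infty[u]=-4d(u(t))$ is not bounded away from zero, a Glassey-type convexity argument does not by itself give blowup. What works is the following. A forward-global unconstrained solution satisfies $P_R[u(t)]\ge 0$; otherwise the monotone decrease of $P_R$ would drive $\int |u|^2 w_R\,dx$ negative. It also has finite variance, by its convergence to $Q_a$. Hence $P_\infty[Q_a^+(0)]>0$. If $Q_a^+$ were also backward-global, applying the same reasoning to $\overline{Q_a^+(-t)}$ would give the opposite sign.

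\textbf{Spectral and modulation inputs.} These are taken from the energy-critical problem, not this one. Here $\Lambda Q_a=Q_a+x\cdot\nabla Q_a$ satisfies $L_+\Lambda Q_a=-2Q_a\neq 0$, so $\ker L_+=\mathrm{span}\{\Lambda Q_a\}$ is false. The nondegeneracy actually needed is $\ker L_-=\mathrm{span}\{Q_a\}$ together with $\ker L_+=\{0\}$ for $a<0$ (respectively $\mathrm{span}\{\partial_j Q_0\}$ for $a=0$). Because (i) concerns arbitrary $H^1$ data, this, and the fact that $L$ has a single negative direction, must be established on all of $H^1$, i.e. in every spherical-harmonic sector, not only in the radial class. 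The function $\Lambda Q_a$ lies instead in the generalized kernel of $\mathcal{L}$. It is the positivity $(L_+\Lambda Q_a,\Lambda Q_a)=\|Q_a\|_{L^2}^2>0$ that yields $\dim E^u=1$.

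Likewise, there is no scaling modulation. The map $u\mapsto \lambda^{-1/2}u(\lambda^{-1}\cdot)$ is not a symmetry of the cubic equation. Once $M(u)=M(Q_a)$ and $E_a(u)=E_a(Q_a)$, the scale is fixed, so only the phase $\theta(t)$ is modulated (plus a translation when $a=0$). Precompactness is therefore in $H^1$ itself, not modulo scaling.

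Finally, for nonradial data with $a<0$ the profile decomposition does carry translation parameters. They are excluded not because the potential breaks translations, but because profiles escaping to infinity evolve by the $a=0$ equation. Its threshold $\mathcal{E}_0$ exceeds $\mathcal{E}_a$, so those profiles scatter. For $a=0$ a translation parameter $x(t)$ genuinely persists and must be controlled.
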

This result was also established first for the standard cubic NLS \cite{DR}, with the case of an attractive inverse-square potential being treated quite recently in the work \cite{yang2026uniqueness}.

Note that Theorem~\ref{T:dynamics1} does {not} address the repulsive case $a>0$.  As discussed above, in the general (nonradial) setting there is no ground state for \eqref{NLSa}, and correspondingly there are no heteroclinic orbits at mass-energy $\mathcal{E}_0$. Instead, the scattering/blowup dichotomy persists at the threshold. Indeed, threshold scattering for solutions with constrained kinetic energy has been established in \cite{MiaoMurphyZheng2023}, while the corresponding blowup statement for unconstrained solutions follows from standard virial-type arguments. 

In this work, we study the dynamics for \emph{radial} solutions in the case $a>0$ at the larger \emph{radial} threshold $\mathcal{E}_a$.  In this setting, the result once again parallels the case of $a\leq 0$.  In particular, there exist radial heteroclinic orbits $Q_a^\pm$ that, together with the ground state solution, completely characterize the possible solution dynamics at the threshold.  We state our main result briefly as follows:
\begin{thm}[Radial threshold dynamics, $a>0$]\label{T} The results of Theorem~\ref{T:dynamics1} (i.e. the existence of heteroclinic orbits and classification of dynamics at mass-energy equal to $M(Q_a)E_a(Q_a)$) also hold for $a>0$, provided we restrict to radial solutions. 
\end{thm}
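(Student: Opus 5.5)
We would follow the Duyckaerts--Roudenko scheme \cite{DR}, as adapted in \cite{yang2026uniqueness} for the inverse-square potential, carrying out every step inside $H^1_{\text{rad}}$. The first ingredient is \emph{modulation theory} near $Q_a$. For radial $u$ with $M(u)E_a(u)=\mathcal{E}_a$ and
\[
\delta(u):=\big|\|u\|_{L^2}\|u\|_{\dot H^1_a}-\mathcal{K}_a\big|
\]
small, we will show that $u$ is close to $e^{i\theta}\lambda Q_a(\lambda\cdot)$ with error $O(\delta)$. Here the symmetry group has only phase and scaling, since radiality removes translations. The proof goes through a variational characterization: radial optimizers of the Gagliardo--Nirenberg inequality are exactly rescaled phase multiples of $Q_a$. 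Compactness of radial optimizing sequences follows from concentration-compactness for radial functions, where the potential term causes no loss. A quantitative version of this requires \emph{nondegeneracy} of the linearized operator $L=(L_+,L_-)$ about $Q_a$ on radial functions: $\ker L_+\cap L^2_{\text{rad}}=\mathrm{span}\{\Lambda Q_a\}$ and $\ker L_- = \mathrm{span}\{Q_a\}$. We also need uniqueness of the radial ground state for $a>0$. We expect to obtain both by ODE arguments on $(0,\infty)$ in the radial variable, as in \cite{yang2026uniqueness}. The singular term $a r^{-2}$ only changes the behavior at $r=0$ to $r^{\beta}$ with $\beta=-\tfrac12+\sqrt{\tfrac14+a}>0$, so the shooting/Sturm comparison arguments go through. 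We regard this spectral input as the main obstacle. What we would try first is to transfer the Kwong/Coffman ODE proof, using the fact that for $a>0$ the radial problem on $\R^3$ is equivalent to a problem in an effective (possibly noninteger) dimension.

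Next we treat the \emph{heteroclinic orbits}. The linearized operator $\mathcal{L}=\begin{pmatrix}0&L_-\\-L_+&0\end{pmatrix}$ restricted to radial functions has a pair of real eigenvalues $\pm e_0$ with eigenfunctions $\mathcal{Y}_\pm$. This follows from $L_+$ having exactly one negative direction, together with the Weinstein-type argument from \cite{DR}, which only uses self-adjointness and coercivity on the appropriate orthogonal complement. We then build approximate solutions $U_k^A=Q_a+Ae^{-e_0t}\mathcal{Y}_++\dots$ and solve the remaining equation by a fixed point near $t=+\infty$. The fixed point uses Strichartz estimates for $e^{-it\H_a}$, which are available for $a>-\tfrac14$ in the radial class. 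Taking $A=\pm1$ and using the scaling symmetry gives $Q_a^\pm$. To determine their behavior as $t$ decreases: $Q_a^-$ is below the threshold in kinetic energy, so by the compactness argument of Theorem~\ref{T:KMVZ} it scatters backward. For $Q_a^+$, finite-time blowup in negative time follows from the radial localized virial estimate.

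Finally we prove the \emph{classification}. Case (ii) is immediate from the variational characterization. In case (i), suppose $u$ does not scatter forward. Then the radial profile decomposition (from \cite{KVMZ}) gives precompactness modulo scaling. A localized virial argument, in which the repulsive term has a favorable sign, forces $\delta(u(t))\to0$ exponentially in an averaged sense, with the scaling parameter controlled via modulation. In case (iii), for finite-variance or radial data that are global forward, we obtain the same exponential convergence. In both cases we conclude by the uniqueness step: any radial solution converging exponentially to $Q_a$ must equal $e^{it}Q_a$ or $U^{A}$ for some $A$, which after rescaling means $Q_a^\pm$. This again rests on the spectral properties of $\mathcal{L}$ and on Strichartz bootstraps with the same structure as in \cite{DR}.
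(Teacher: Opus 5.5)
Your overall architecture matches the paper's: modulation, spectral theory of $\mathcal{L}$, construction of $U^A$, compactness/virial, rigidity. However, several load-bearing steps are wrong or missing.

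\textbf{The nondegeneracy statement.} The kernel claim you list is false: $\Lambda Q_a=Q_a+x\cdot\nabla Q_a$ is not in $\ker L_+$. Since $\H_a$ is homogeneous of degree $-2$, $Q_\lambda=\lambda Q_a(\lambda\cdot)$ solves $\H_aQ_\lambda+\lambda^2Q_\lambda-Q_\lambda^3=0$. Differentiating at $\lambda=1$ gives $L_+\Lambda Q_a=-2Q_a\neq0$. The frequency term breaks scale invariance, so, unlike for the energy-critical ground state, scaling produces a generalized-kernel element of $\mathcal{L}$, not a kernel element of $L_+$. What is actually needed, and what the paper's coercivity argument rests on, is $\ker L_+\cap H^1_{\text{rad}}=\{0\}$ together with $\ker L_-=\mathrm{span}\{Q_a\}$. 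Then $L_+$ is coercive under the single condition $(v,\H_aQ_a)=0$, and the modulation uses only a phase and a coefficient $\alpha$ along $Q_a$; the scale is fixed once $M(u)=M(Q_a)$. The direction $\Lambda Q_a$ enters instead through the eigenvalue count: $(L_+\Lambda Q_a,\Lambda Q_a)=-2(Q_a,\Lambda Q_a)=\|Q_a\|_{L^2}^2>0$ is what keeps the generalized kernel from absorbing the negative direction of $L_+$, and it forces $\dim E^u=1$. In your Weinstein-type formulation this reads $(L_+^{-1}Q_a,Q_a)=\tfrac14\|Q_a\|_{L^2}^2>0$, which presupposes that $L_+$ is invertible on radial functions, i.e.\ the opposite of your kernel claim.

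\textbf{Uniqueness of the radial ground state.} This is the genuinely new input, and your variational characterization, modulation lemma, and case (ii) all depend on it, but you leave it open and the proposed route rests on an inexact reduction. With $\gamma=-\tfrac12+\sqrt{\tfrac14+a}$ (your $\beta$) and $Q=r^{\gamma}v$, the profile equation becomes $v''+\tfrac{2+2\gamma}{r}v'-v+r^{2\gamma}v^3=0$. This has effective dimension $3+2\gamma$, but with a power weight $r^{2\gamma}$ on the cubic term, so Kwong/Coffman does not transfer as is. The paper instead verifies the hypotheses of the Shioji--Watanabe uniqueness theorem for $v''+\tfrac{f'}{f}v'+gv+hv^3=0$ with $f=r^{2+2\gamma}$, $g=-1$, $h=r^{2\gamma}$, using $Q\sim r^{\gamma}$ at $0$ and exponential decay at infinity. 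The substantive point there is the one-sign-change condition on the theorem's auxiliary function $G$.

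\textbf{Behaviour of the heteroclinic orbits.} This does not follow as you state. $Q_a^-$ lies exactly at $\mathcal{E}_a$, so Theorem~\ref{T:KMVZ} does not apply. One argues instead: if $Q_a^-$ fails to scatter backward, threshold compactness and the virial argument on $(-\infty,0]$ give $d(Q_a^-(t))\to0$ as $t\to-\infty$. Then $\int_{t_1}^{t_2}d(Q_a^-(t))\,dt\lesssim d(Q_a^-(t_1))+d(Q_a^-(t_2))$ with $t_1\to-\infty$, $t_2\to\infty$ forces $d\equiv0$, a contradiction. Similarly, at the threshold a localized virial yields only $\tfrac{d}{dt}P_R[u]\le -c\,d(u(t))$, which does not give blowup by itself. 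The paper shows, with the radial Gagliardo--Nirenberg tail bound replacing compactness, that every forward-global unconstrained threshold solution has $P_R\ge0$ and strictly decreasing. It then applies this to $Q_a^+$ and to its time reversal, which flips the sign of $P_R$. Finally, $U^A$ is reduced to $U^{\pm1}$ by time translation and phase, not by scaling, which would change the mass.
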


The classification of dynamics for nonlinear dispersive PDE has been an active topic of research for the last several decades.  In particular, there has been substantial interest in the classification of threshold dynamics in recent years.  In fact, the proof of Theorem~\ref{T} relies heavily on the general strategies introduced originally in \cite{DR, DM} and developed further in works such as \cite{yang2026uniqueness, CRITICAL, LiZhang2009, CFR, CP, CM2023, CamposFarahMurphy2026, CamposFarahMurphy2026_Critical}.  The key new ingredient in our setting is a proof of the uniqueness of the radial ground state for \eqref{NLSa}, which underpins the analysis that follows.  Once this fact has been established, we can capitalize on existing techniques quite effectively.  In fact, the overarching radial assumption even simplifies the analysis at several points.

In the rest of the introduction, we will outline the strategy of the proof of Theorem~\ref{T}.  In the main body of the paper, we will focus on presenting in detail the components of the proof that are new in our setting.  For the more standard parts of the argument, we will instead provide proof sketches and refer the reader to the literature for further details. 

\subsection{Outline of the paper}

In Section~\ref{S:notation}, we introduce notation and collect several useful results to be utilized throughout the paper.

We begin our analysis in Section~\ref{S:uniqueness} by establishing the uniqueness of the radial ground state for \eqref{NLSa} with $a>0$.  While previous work has established uniqueness in the case $a\leq 0$ \cite{UNIQ, yang2026uniqueness, kwong1989uniqueness, mcleod1987uniqueness}, the case $a>0$ has so far remained open.  To establish uniqueness, we observe that after a suitable change of variables, we can fit our problem into the general framework introduced in \cite{NK} and obtain uniqueness directly.  

In Section~\ref{S:Spectral}, we carry out a spectral analysis of the operator $\mathcal{L}$ that arises from linearization of \eqref{NLSa} around the ground state solution.  In particular, we identify the eigenfunctions of this operator, which later play a key role in constructing the heteroclinic orbits. We remark that the analysis is simplified here due to the radial assumption. 

In Section~\ref{S:Special}, we construct particular solutions $U^A$ parametrized by $A\in\R$ to \eqref{NLSa} that converge exponentially to the ground state solution forward in time and additionally establish uniqueness properties for such solutions.  In particular, this section constructs the heteroclinic orbits (corresponding to the parameters $A=\pm 1$).  The rest of the paper is primarily dedicated to proving the classification of threshold solutions, which is carried out in Sections~\ref{S:Modulation}--\ref{S:Proof}.

We first consider the case of \emph{constrained} solutions (case (i) in Theorem~\ref{T:dynamics1}). It suffices to show that if a constrained solution fails to scatter in both time directions, then $u=Q_a^-$ up to symmetries.  To this end, we show that if $u$ fails to scatter forward in time, then the orbit of $u$ is pre-compact in $H^1$. Using a virial argument, we can then prove that $u$ converges to the ground state solution exponentially as $t\to\infty$.  By the uniqueness results established in Section~\ref{S:Special}, this ultimately allows us to conclude that $u$ must coincide with the heteroclinic orbit $Q_a^-$.

The case of \emph{unconstrained} solutions (case (iii) in Theorem~\ref{T:dynamics1}) is similar: if $u$ does not blow up forward in time (say), then again we use virial arguments to prove convergence to the ground state solution as $t\to\infty$ and appeal to uniqueness to conclude that $u$ must coincide with $Q_a^+$.

Carrying out these arguments requires several key ingredients.  In Section~\ref{S:Modulation} we introduce the \emph{modulation analysis}, which provides a description of the solution to \eqref{NLSa} when it is close to the orbit of the ground state.  Next, in Section~\ref{S:Constrained}, we consider constrained solutions, proving the compactness property and using virial arguments to obtain exponential convergence to the ground state solution.  In Section~\ref{S:Unconstrained} we then address the case of unconstrained solutions, relying on an improved radial Gagliardo--Nirenberg inequality in place of compactness. 

Finally, in Section~\ref{S:Proof}, we put together all of the pieces and complete the description of the heteroclinic orbits and the classification of threshold dynamics.

\subsection*{Acknowledgements} Much of this work was carried out while L.C. and R.S. were visiting scholars 
at the University of Oregon, with financial support from CNPq (Conselho Nacional de Desenvolvimento Cient\'{i}fico e Tecnol\'{o}gico). L. C. was partially supported by the CNPq grants 07733/2023-8 and 404800/2024-6, and by the FAPEMIG (Funda\c{c}\~{a}o de Amparo \`{a} Pesquisa do Estado de Minas Gerais) grant APQ-03186-24. J. M. was supported by NSF grant DMS-2350225 and Simons Foundation grant MPS-TSM-00006622. R.S. was supported by the CNPq grant 201049/2025-2 and by FAPEMIG.

\section{Preliminaries}\label{S:notation}
\subsection{Notation}
We write $A \lesssim B$ if there exists a constant $C>0$ such that $A \leq C B$. If $A\lesssim B$ and $B\lesssim A$ we write $A\sim B$.  We use the standard mixed Lebesgue space-time norms $L_t^q L_x^r$ on space-time slabs $I\times\R^3$. In later sections we will use the specific norms defined by  
\[
Z(I) := \sup \{L_t^{\frac{10}{3}}H_x^{1,\frac{10}{3}}, L^\infty_tH_x^1, L^2_tH_x^{1,6}, L^5_tH_x^{1,\frac{30}{11}}\}\qtq{and} N(I) := L_t^{\frac{10}{7}} H_x^{1,\frac{10}{7}}.
\]

The equivalence of the Sobolev norms defined in terms of $-\Delta$ and those defined in terms of $\H_a$ (or lack thereof) plays an important technical role throughout this paper.  This question was investigated thoroughly in \cite{SobolevEq}. We record the results we need from \cite{SobolevEq} in the following lemma.

\begin{lemma}[Equivalence of Sobolev spaces \cite{SobolevEq}]\label{L:EquivNorms} Let $a>-\tfrac14$ and $s\in(0,2)$. Define $\sigma=\tfrac12 - ( \tfrac14+a )^{\frac12}$. If $p\in(1,\infty)$ satisfies $\tfrac{s+\sigma}{3}<\tfrac{1}{p}<\min\{1,1-\frac{\sigma}{3}\}$, then
\[
\| |\nabla|^s f\|_{L^p} \lesssim_{p,s} \| (\H_a)^{\frac{s}{2}} f\|_{L^p} \qtq{for all}f\in C_c^\infty(\R^3\backslash\{0\}).
\]
If $p\in(1,\infty)$ satisfies $\max\{\tfrac{s}{3},\tfrac{\sigma}{3}\}<\tfrac{1}{p}<\min\{1,1-\tfrac{\sigma}{3}\}$, then
\[
\| (\H_a)^{\frac{s}{2}} f\|_{L^p} \lesssim_{p,s} \| |\nabla|^s f\|_{L^p} \qtq{for all} f\in C_c^\infty(\R^3\backslash\{0\}). 
\]
\end{lemma}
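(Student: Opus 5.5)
The plan is to deduce both inequalities from pointwise kernel bounds for the heat semigroup $e^{-t\H_a}$, rather than from any algebraic relation between $\H_a$ and $-\Delta$. I take as the main input the known heat kernel estimate (Liskevich--Sobol, Milman--Semenov):
\[
0\le e^{-t\H_a}(x,y)\lesssim \Bigl(1\vee\tfrac{\sqrt t}{|x|}\Bigr)^{\sigma}\Bigl(1\vee\tfrac{\sqrt t}{|y|}\Bigr)^{\sigma} t^{-\frac32}e^{-\frac{|x-y|^2}{ct}},
\]
with $\sigma=\tfrac12-(\tfrac14+a)^{1/2}$. When $a>0$ we have $\sigma<0$ and the weights improve decay near the origin. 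These weights are responsible for the restrictions $\tfrac{\sigma}{3}<\tfrac1p<1-\tfrac{\sigma}{3}$: in that range, $e^{-t\H_a}$ is uniformly bounded on $L^p$, since a Schur test with the weight $|x|^{-\sigma}$ requires it to be locally in $L^p$ and its dual weight to be locally in $L^{p'}$. From these bounds I get three consequences: Gaussian-type Littlewood--Paley square function estimates for $\H_a$, $L^p$ boundedness of spectral multipliers $m(\H_a)$ of Mikhlin type, and Bernstein-type estimates, all in the range $\tfrac{\sigma}{3}<\tfrac1p<1-\tfrac\sigma3$.

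Both inequalities will then follow by comparing $\H_a^{s/2}$ with $(-\Delta)^{s/2}$ through a Hardy-type weight. The first step is the case $s=2$. Here
\[
\Delta f=-\H_a f+a|x|^{-2}f,
\]
so the first inequality reduces to the Hardy-type bound $\||x|^{-2}f\|_{L^p}\lesssim\|\H_a f\|_{L^p}$. To prove it, I would write $\H_a^{-1}=\int_0^\infty e^{-t\H_a}\,dt$ and bound its kernel by
\[
|x-y|^{-1}\Bigl(1\vee\tfrac{|x-y|}{|x|}\Bigr)^{\sigma}\Bigl(1\vee\tfrac{|x-y|}{|y|}\Bigr)^{\sigma}.
\]
Then $|x|^{-2}\H_a^{-1}$ is bounded on $L^p$ by a Schur test with power weights (Stein--Weiss), and the range in which this works is exactly $\tfrac{2+\sigma}{3}<\tfrac1p$. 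For the reverse inequality I would use the ordinary Hardy inequality $\||x|^{-2}f\|_p\lesssim\|\Delta f\|_p$, valid for $\tfrac1p>\tfrac23$, together with the boundedness of $\H_a\,(-\Delta)^{-1}$, giving the condition $\max\{\tfrac s3,\tfrac\sigma3\}<\tfrac1p$.

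For general $s\in(0,2)$, I would use subordination,
\[
\H_a^{-s/2}=c_s\int_0^\infty t^{\frac s2-1}e^{-t\H_a}\,dt,
\]
and similarly for $(-\Delta)^{-s/2}$. The difference of the two semigroups is handled by Duhamel's formula:
\[
e^{-t\H_a}-e^{t\Delta}=-a\int_0^t e^{-(t-\tau)\H_a}\,|x|^{-2}\,e^{\tau\Delta}\,d\tau.
\]
Inserting the kernel bounds, the operator $\H_a^{s/2}-(-\Delta)^{s/2}$, composed with $(-\Delta)^{-s/2}$ or $\H_a^{-s/2}$, has a kernel dominated by $|x|^{-s}$ times a kernel of the same Stein--Weiss type as above. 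The fractional Hardy inequality $\||x|^{-s}f\|_{L^p}\lesssim\||\nabla|^sf\|_{L^p}$ (valid for $s<3/p$), and its $\H_a$-analogue proved via the weighted kernel of $\H_a^{-s/2}$ (valid for $\tfrac{s+\sigma}{3}<\tfrac1p$), then give the two inequalities on the stated ranges. Alternatively, one can interpolate between $s=0$ and $s=2$ using bounded imaginary powers $\H_a^{i\gamma}$, which follow from the multiplier theorem. A density argument in $C_c^\infty(\R^3\setminus\{0\})$ completes the proof.

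I expect the main obstacle to be the weighted Schur and Stein--Weiss estimates for the Duhamel and subordination kernels. One has to track precisely the singular region where $\sqrt t\gg|x|$ or $\sqrt t\gg|y|$, where the weights $(\sqrt t/|x|)^\sigma$ enter. The sharp endpoints $\tfrac{s+\sigma}{3}$ and $1-\tfrac\sigma3$ come out only if the weight exponents are balanced correctly between the $x$ and $y$ variables. I would first try splitting each kernel into the regions $|y|<|x|/2$, $|y|>2|x|$, and $|y|\sim|x|$, applying Hardy's inequality in the radial variable on the first two and a standard Calderón--Zygmund/Young bound on the third.
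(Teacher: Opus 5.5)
The paper does not prove this lemma; it is quoted from \cite{SobolevEq}. Your main line is essentially the argument given there. It has four ingredients: the weighted heat-kernel bounds; the Hardy inequality $\||x|^{-s}f\|_{L^p}\lesssim\|\H_a^{s/2}f\|_{L^p}$, obtained from the weighted kernel of $\H_a^{-s/2}$ by a Schur test (this is where $\tfrac{s+\sigma}{3}<\tfrac1p<1-\tfrac{\sigma}{3}$ comes from); the classical fractional Hardy inequality (where $\tfrac s3<\tfrac1p$ comes from); and a Duhamel estimate showing that trading $\H_a^{s/2}$ for $(-\Delta)^{s/2}$ costs only $|x|^{-s}$.

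The difference is organizational. As far as I can tell, \cite{SobolevEq} runs the comparison through heat-kernel Littlewood--Paley square functions, whereas you go directly through the semigroup. For the direct route you want the formula $\lambda^{s/2}=c_s\int_0^\infty(1-e^{-t\lambda})\,t^{-1-s/2}\,dt$, which is valid exactly for $0<s<2$, rather than negative-power subordination. The Duhamel gain $t/|x|^2$ for $t\lesssim|x|^2$ then bounds the kernel of $\H_a^{s/2}-(-\Delta)^{s/2}$ by $|x|^{-2}|x-y|^{-1-s}$ for $|x-y|\ll|x|$. This is locally integrable precisely because $s<2$.

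Two points need correcting. First, the pointwise domination you assert for the composed kernels is false in general. When $s+\sigma<0$, the kernel of $(\H_a^{s/2}-(-\Delta)^{s/2})\H_a^{-s/2}$ is nonnegative (for $a\geq 0$). In the region $|y|\ll|x|$ it is bounded below by a multiple of $|x|^{-3}(|y|/|x|)^{s}$. That is much larger than $|x|^{-s}\H_a^{-s/2}(x,y)\approx|x|^{-3}(|y|/|x|)^{-\sigma}$ there. The operator is still $L^p$-bounded, so the fix is to state the key step as the operator bound $\|(\H_a^{s/2}-(-\Delta)^{s/2})g\|_{L^p}\lesssim\||x|^{-s}g\|_{L^p}$. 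Only then insert $g=\H_a^{-s/2}h$ or $g=(-\Delta)^{-s/2}h$ and apply the two Hardy inequalities.

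Second, the interpolation ``alternative'' does not recover the lemma. For $a>0$ the $s=0$ endpoint holds for all $1<p_0<\infty$. But $\Delta\H_a^{-1}$ is bounded on $L^{p_1}$ only when $\tfrac{2+\sigma}{3}<\tfrac1{p_1}$. To see this, test on $g\geq 0$ supported away from the origin: then $\H_a^{-1}g\approx c|x|^{-\sigma}$ and $\Delta\H_a^{-1}g\approx ac|x|^{-2-\sigma}$ near $0$. Stein interpolation therefore gives the first inequality only for $\tfrac{s(2+\sigma)}{6}<\tfrac1p$. This is strictly worse than $\tfrac{s+\sigma}{3}<\tfrac1p$ whenever $-2<\sigma<0$; for example, $a=2$, $s=1$ gives $p<6$ instead of all $p$. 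So the direct comparison is not optional.

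Finally, at $s=2$ the reverse bound $\|\H_a f\|_{L^p}\lesssim\|\Delta f\|_{L^p}$ follows from the classical Hardy inequality alone. Invoking the boundedness of $\H_a(-\Delta)^{-1}$ there is circular.
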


We next record an improved Gagliardo--Nirenberg inequality for radial functions. 

\begin{lemma}[Radial Gagliardo--Nirenberg inequality \cite{KVMZ}]\label{L:RadialGNI}
For any $f\in H_{\text{rad}}^1(\R^3)$ and $R>1$,
\[
\|f\|_{L_x^4(|x|>R)}^4\lesssim R^{-2}\|f\|_{L_x^2}\|f\|_{\dot{H}_a^1}^3.
\]
\end{lemma}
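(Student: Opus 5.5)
The plan is to combine the radial Strauss-type pointwise decay with Hardy's inequality, and to use Lemma~\ref{L:EquivNorms} only at the end to pass from $\|\nabla f\|_{L^2}$ to $\|f\|_{\dot H^1_a}$. Before doing so I would run a scaling test, because it shows the stated exponents cannot hold as written, and the step where the argument breaks is exactly the one that produces $R^{-2}\|f\|_{L^2}\|f\|_{\dot H^1_a}^3$.

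The first step is the pointwise bounds. For $f\in H^1_{\text{rad}}(\R^3)$ with $r=|x|$, I would write $r^2|f(r)|^2=-\int_r^\infty\partial_s\big(s^2|f(s)|^2\big)\,ds$, discard the nonnegative term $2s|f|^2$, and apply Cauchy--Schwarz. This gives
\[
|x|\,|f(x)|^2\lesssim \|f\|_{L^2}\|\nabla f\|_{L^2}.
\]
Writing instead $f(r)=-\int_r^\infty f'(s)\,ds$ and using $\int_r^\infty s^{-2}\,ds=r^{-1}$ gives
\[
|x|\,|f(x)|^2\lesssim\|\nabla f\|_{L^2}^2.
\]
Multiplying the two bounds yields $|f(x)|^4\lesssim |x|^{-2}\|f\|_{L^2}\|\nabla f\|_{L^2}^3$, which carries precisely the norm combination in the statement. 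Since $a>0$, we have $\|\nabla f\|_{L^2}\le\|f\|_{\dot H^1_a}$, so Lemma~\ref{L:EquivNorms} is not even needed for this comparison.

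The second step, which is the main obstacle, is converting this pointwise bound into an integrated one. The natural splitting $\int_{|x|>R}|f|^4\le\sup_{|x|>R}\big(|x||f|^2\big)\int_{|x|>R}|f|^2|x|^{-1}$ gives $R^{-1}\|f\|_{L^2}^3\|\nabla f\|_{L^2}$. If one uses Hardy's inequality $\int|f|^2|x|^{-2}\le4\|\nabla f\|_{L^2}^2$ on the last factor instead, the result is $R\,\|f\|_{L^2}\|\nabla f\|_{L^2}^3$, which has a positive power of $R$. My first attempt would be to interpolate between these two bounds, using $R>1$ to absorb powers.

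However, the scaling check rules out the stated form. Set $f_\lambda(x)=f(\lambda x)$. Then the left side for $f_\lambda$ at radius $R$ equals $\lambda^{-3}\int_{|y|>\lambda R}|f|^4$, while the right side equals $\lambda^{-3}R^{-2}\|f\|_{L^2}\|f\|_{\dot H^1_a}^3$. Now fix $S=\lambda R$ and let $R\to\infty$. The inequality would force $\int_{|y|>S}|f|^4=0$ for every radial $f$, which is absurd. Hence no interpolation can close the argument for the literal statement. The estimate is only correct in a normalized form: either with exponents matching the splitting above, $R^{-1}\|f\|_{L^2}^3\|\nabla f\|_{L^2}$, or with a scale-dependent normalization as in \cite{KVMZ} (for instance fixing $\|f\|_{L^2}$ and $\|f\|_{\dot H^1_a}$ comparable to those of $Q_a$).

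In the intended application, $f=u(t)$ is a threshold solution with $\|u\|_{L^2}$ and $\|u\|_{\dot H^1_a}$ of size $\mathcal{K}_a$ after rescaling. In that setting the chain $R^{-1}\|f\|_{L^2}^3\|\nabla f\|_{L^2}\lesssim R^{-1}\|f\|_{L^2}\|f\|_{\dot H^1_a}^3$ holds up to a constant depending on the normalization, which is how I would justify the use made of the lemma. The lemma as printed, with $R^{-2}$ and no normalization, I cannot prove, because the scaling argument above shows it is false.
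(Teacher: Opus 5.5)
Your scaling test is correct and settles the matter. The potential $a|x|^{-2}$ is dilation-invariant, so $\|f\|_{L^4}^4$ and $\|f\|_{L^2}\|f\|_{\dot H^1_a}^3$ both scale like $\lambda^{-3}$ under $f\mapsto f(\lambda\,\cdot)$. Dilating a fixed radial $f$ therefore sends the fixed region $\{|y|>S\}$ to $\{|x|>R\}$ with $R$ as large as you like, while the ratio of the two sides is unchanged, so no factor $R^{-2}$ can survive. The paper gives no proof of Lemma~\ref{L:RadialGNI}; it only cites \cite{KVMZ}. As printed, the statement has the exponents of the two norms interchanged. The scale-invariant inequality, valid for every $R>0$, is
\[
\|f\|_{L^4(|x|>R)}^4\lesssim R^{-2}\|f\|_{L^2}^3\|f\|_{\dot H^1_a}.
\]
This is exactly the form the paper uses in the proof of Lemma~\ref{L:UncVir}, where it writes $R^{-2}\|u\|_{\dot H^1_a}\|u\|_{L^2}^3$, so nothing downstream is affected.

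Your proposal goes wrong in the repair. Your first pointwise bound $|x|\,|f(x)|^2\lesssim\|f\|_{L^2}\|\nabla f\|_{L^2}$ fails your own scaling test: the left side scales like $\lambda^{-1}$ and the right side like $\lambda^{-2}$. The computation you describe (discard $2s|f|^2$, then Cauchy--Schwarz) actually gives
\[
r^2|f(r)|^2\le 2\int_r^\infty s^2|f(s)|\,|f'(s)|\,ds\lesssim\|f\|_{L^2}\|\nabla f\|_{L^2},
\]
which is the Strauss bound $|x|^2|f(x)|^2\lesssim\|f\|_{L^2}\|\nabla f\|_{L^2}$. With it the correct lemma follows in one line, using $a>0$ in the last step:
\[
\int_{|x|>R}|f|^4\,dx\le\sup_{|x|>R}|f(x)|^2\,\|f\|_{L^2}^2\lesssim R^{-2}\|f\|_{L^2}^3\|\nabla f\|_{L^2}\le R^{-2}\|f\|_{L^2}^3\|f\|_{\dot H^1_a}.
\]
So your proposed $R^{-1}\|f\|_{L^2}^3\|\nabla f\|_{L^2}$ is not the right corrected form. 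It is true for $R>1$ only because it follows from the $R^{-2}$ bound, it is not dilation-invariant, and your derivation of it rests on a false inequality. Similarly, multiplying the correct pointwise bounds gives $|f(x)|^4\lesssim|x|^{-3}\|f\|_{L^2}\|\nabla f\|_{L^2}^3$, not $|x|^{-2}$. Since $|x|^{-3}$ is not integrable on $\{|x|>R\}\subset\R^3$, that route cannot give any integrated bound with the printed norm combination, which is consistent with your scaling obstruction. Your normalization remark is fine. Under $M(u)=M(Q_a)$ and $\|u\|_{\dot H^1_a}>\|Q_a\|_{\dot H^1_a}$, the printed right-hand side dominates the correct one. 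The correct inequality, however, needs no normalization at all.
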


We will need the following profile decomposition adapted to the $H^1\to L_{t,x}^5$ Strichartz estimate.  Because we work in the radial setting, there is no failure of compactness due to spatial translation.

\begin{prop}[Linear Profile Decomposition, \cite{KVMZ}]\label{P:LPD} Let $a>-\frac{1}{4}$ and let $\{f_n\}$ be a bounded sequence in $H^1_{\text{rad}}$. Passing to a subsequence, there exist $J^*\in\{1,2,...,\infty\}$, nonzero radial profiles $\{\phi^j\}_{j=1}^{J^*}\subset H^1(\R^3)$, and parameters $(t_n^j)_{j=1}^{J^*}\subset \R$ satisfying the following:

For each finite $0\leq J\leq J^*$, we can write
\[
f_n=\sum_{j=1}^J\phi_n^j+r_n^J,\quad\text{with}\quad \phi_n^j=e^{it_n^j\H_a}\phi^j\quad\text{and}\quad r_n^J\in H^1
\]
satisfying the following decouplings for any finite $0\leq J\leq J^*$:
\begin{align*}
&\lim_{n\to\infty}\biggl\{\|(\H_a)^\frac{s}{2}f_n\|_{L_x^2}^2-\sum_{j=1}^J\|(\H_a)^\frac{s}{2}\phi_n^j\|_{L_x^2}^2-\|(\H_a)^\frac{s}{2}r_n^J\|_{L_x^2}^2\biggr\}=0,\quad s\in\{0,1\}, \\
&\lim_{n\to\infty}\biggl\{\|f_n\|_{L_x^4}^4-\sum_{j=1}^J\|\phi_n^j\|_{L_x^4}^4-\|r_n^J\|_{L_x^4}^4\biggr\}=0.
\end{align*}

The remainders $r_n^J$  vanish in the Strichartz norm:
\begin{equation}\label{rnjvanish}
\limsup_{J\to J^*}\limsup_{n\to\infty}\|e^{-it\H_a}r_n^J\|_{L_{t,x}^5(\R\times\R^3)}=0,
\end{equation}
and for any $j\not=k$ we have the asymptotic orthogonality
\begin{equation}\label{tnjorthogonal}
\lim_{n\to\infty}|t_n^j-t_n^k|=\infty.
\end{equation}
Finally, for each $j$, we may assume that either $t_n^j\to\pm\infty$ or $t_n^j\equiv0$.
\end{prop}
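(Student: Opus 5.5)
This is a linear profile decomposition for radial bounded sequences in $H^1$ adapted to $e^{-it\H_a}$. We follow the standard concentration-compactness scheme (Keraani; Kenig--Merle; Killip--Visan), with two simplifications. The radial assumption removes translations. Working at the $H^1$ (inhomogeneous) level removes scaling, since bounded mass prevents concentration. So only time translations survive as symmetry parameters.

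\textbf{Step 1: inverse Strichartz inequality.} The key step is the following claim. If $\|f_n\|_{H^1}\le A$ and $\|e^{-it\H_a}f_n\|_{L^5_{t,x}}\ge\varepsilon>0$, then after passing to a subsequence there exist times $t_n$ and a nonzero radial $\phi\in H^1$ such that $e^{-it_n\H_a}f_n\rightharpoonup\phi$ weakly in $H^1$, with
\[
\|\phi\|_{H^1}\gtrsim_{A,\varepsilon}1 .
\]
To prove it, first use a refined Strichartz estimate obtained from a Littlewood--Paley decomposition adapted to $\H_a$ (the frequency projections built from $\H_a$ via the heat kernel, as in the Killip--Miao--Visan--Zhang--Zheng works). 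Interpolating $L^5$ between $L^{10/3}$ and $L^\infty$, and using Bernstein, this yields a dyadic piece $P_N f_n$ and a space-time point $(t_n,x_n)$ at which $|e^{-it_n\H_a}P_Nf_n(x_n)|\gtrsim 1$.

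The $H^1$ bound forces $N\lesssim 1$ from above, and the $L^2$ bound forces $N\gtrsim 1$ from below, so the frequency scale is comparable to $1$. Radiality, together with the radial Sobolev/Strauss decay (compare Lemma~\ref{L:RadialGNI}), keeps $|x_n|$ bounded. We may therefore pass to a limit $x_n\to x_\infty$ and extract a weak limit $\phi$ of $e^{-it_n\H_a}f_n$. Pairing against a localized kernel gives the lower bound on $\|\phi\|_{H^1}$. The Pythagorean decouplings of $\|\H_a^{s/2}\cdot\|_{L^2}^2$ for $s=0,1$ follow from weak convergence, because $e^{-it\H_a}$ is unitary and commutes with $\H_a^{s/2}$.

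\textbf{Step 2: iteration.} Set $r_n^1=f_n-e^{it_n\H_a}\phi$ and repeat the extraction on $r_n^J$. The decouplings accumulate, so $\sum_j\|\phi^j\|_{H^1}^2\le \limsup\|f_n\|_{H^1}^2$. Combined with the quantitative lower bound from Step 1, this forces $\limsup_n\|e^{-it\H_a}r_n^J\|_{L^5}\to0$, which is \eqref{rnjvanish}.

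\textbf{Step 3: orthogonality of times.} Orthogonality \eqref{tnjorthogonal} follows by the usual argument. If $t_n^j-t_n^k$ stayed bounded, then $e^{-it_n^k\H_a}r_n^{k-1}$ would converge weakly to $0$ by construction, while also carrying the nonzero weak limit $e^{i\tau\H_a}\phi^j$, a contradiction. Passing to subsequences and absorbing bounded times into the profile, we may take $t_n^j\to\pm\infty$ or $t_n^j\equiv0$.

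\textbf{Step 4: $L^4$ decoupling.} This is the step I expect to be delicate, together with the refined Strichartz estimate for $\H_a$. If $t_n^j\to\pm\infty$, the dispersive decay of $e^{-it\H_a}$ on $H^1_{rad}$ gives $\|e^{it_n^j\H_a}\phi^j\|_{L^4}\to0$, so these profiles contribute nothing. If $t_n^j\equiv0$, one combines Rellich compactness on bounded sets with radial decay at infinity (Lemma~\ref{L:RadialGNI}) to get strong $L^4$ convergence locally, and then applies a Brezis--Lieb type argument, using the time orthogonality to kill cross terms.

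The equivalence of Sobolev norms in Lemma~\ref{L:EquivNorms} is needed throughout to pass between $\H_a$-based and $\nabla$-based estimates. This is precisely where $a>-\tfrac14$ and the constraints on $p$ enter.
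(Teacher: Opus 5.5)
Your outline is the standard inverse-Strichartz/iteration argument behind the result of \cite{KVMZ} that the paper simply cites (the paper gives no proof of its own), correctly specialized to radial data. No scaling or translation parameters survive, the Strauss bound keeps the concentration points $x_n$ bounded, and the compact embedding $H^1_{\text{rad}}\hookrightarrow L^4$ handles the $L^4$ decoupling. For $a\geq 0$, the only range in which the paper actually uses the proposition, the argument goes through.

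As written, however, Step~1 fails on the remaining part $-\frac14<a<0$ of the stated range. There $\sigma=\frac12-(\frac14+a)^{1/2}>0$, and the heat kernel of $\H_a$ carries the singular factor $(1\vee\sqrt{t}/|x|)^{\sigma}$. Consequently, elements of the range of $P_N^a$ generically behave like $|x|^{-\sigma}$ at the origin, and Bernstein for $\H_a$ only maps into $L^q$ with $q<3/\sigma$. So interpolating $L^5$ between $L^{10/3}$ and $L^\infty$ yields nothing. The pairing step $\langle e^{-it_n\H_a}f_n, K_N(x_n,\cdot)\rangle\to\langle \phi,K_N(x_\infty,\cdot)\rangle$ also breaks down if $x_\infty=0$, because the kernel $K_N(x,\cdot)$ of $P_N^a$ is not uniformly in $L^2$ as $x\to 0$.

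The repair is routine. Use H\"older on the ball $\{|x|\leq\eta N^{-1}\}$, Strichartz in $L_t^5L_x^{30/11}$, and $L^q$-Bernstein with $5<q<3/\sigma$ (e.g.\ $q=6$). Then the ball contributes only $O(\eta^{\frac35-\frac3q})\|P_N^af_n\|_{\dot H^{1/2}}$ to $\|e^{-it\H_a}P_N^af_n\|_{L^5_{t,x}}$. On $\{|x|\geq\eta N^{-1}\}$, the heat-kernel bounds give $|P_N^ah(x)|\lesssim\eta^{-\sigma}N^{3/2}\|h\|_{L^2}$. Interpolating on this exterior region produces a point with $N|x_n|\geq\eta$, so $x_\infty\neq0$ and your pairing argument applies.

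Separately, your Step~3 sentence has the roles reversed. By construction $e^{-it_n^k\H_a}r_n^{k-1}\rightharpoonup\phi^k\neq0$. The contradiction comes from showing that this weak limit is $0$ when $t_n^k-t_n^j$ stays bounded, with $k$ minimal. That uses $e^{-it_n^j\H_a}r_n^j\rightharpoonup0$, together with $e^{-is_n\H_a}\phi^l\rightharpoonup0$ for the intermediate profiles $j<l<k$, where $s_n=t_n^k-t_n^l\to\pm\infty$.
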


We next import a stability result for \eqref{NLSa} from \cite[Section~2]{KVMZ}. We write $\dot S_a^{\sigma}$ to denote homogeneous Strichartz norms adapted to $\H_a$ and set 
\[
\tilde N(I):=L_{t,x}^\frac{10}{7}(I\times\R^3)+L_t^\frac{5}{3}L_x^\frac{30}{23}(I\times\R^3)+L_t^1L_x^2(I\times\R^3).
\]

\begin{prop}[Stability \cite{KVMZ}]\label{P:Stab}
Fix $a>-\frac{1}{4}$. Let $\tilde{v}:I\times\R^3\to\mathbb{C}$ solve
\[
(i\partial_t-\H_a)\tilde{v}=-|\tilde v|^2\tilde v+e,\quad v(t_0)=\tilde v_0\in H_x^1(\R^3)
\]
for some $t_0\in I$ and some $e:I\times\R^3\to \mathbb{C}$. Fix $v_0\in H^1$ and suppose that 
\[
\|v_0\|_{H^1}+\|\tilde v_0\|_{H^1}\leq E\quad\text{and}\quad \|\tilde v\|_{L_{t,x}^5}\leq L
\]
for some $E,L>0$. There exists $\varepsilon_0=\varepsilon_0(E,L)>0$ such that if $0<\varepsilon<\varepsilon_0$ and
\[
\|\tilde{v}_0-v_0\|_{\dot{H}^\frac{1}{2}}+\||\nabla|^\frac{1}{2}e\|_{\tilde N(I)}<\varepsilon,
\]
then there exists a solution $v:I\times\R^3\to\mathbb{C}$ to \eqref{NLSa} with $v(t_0)=v_0$ satisfying
\[
\|v-\tilde{v}\|_{\dot{S}_a^\frac{1}{2}}\lesssim_{E,L}\varepsilon\qtq{and}
\|v\|_{S_a^1(I\times\R^3)}\lesssim_{E,L}1.
\]
\end{prop}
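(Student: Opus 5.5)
We prove this stability statement by the standard perturbative argument of \cite{KVMZ}, which adapts the Euclidean long-time perturbation theory to the operator $\H_a$. The linear inputs are the Strichartz estimates for $e^{-it\H_a}$ (available for all $a>-\tfrac14$) and the equivalence of Sobolev norms in Lemma~\ref{L:EquivNorms}. The latter lets us transfer the fractional chain and product rules for $|\nabla|^{1/2}$ and $\nabla$ to the operators $(\H_a)^{1/4}$ and $(\H_a)^{1/2}$, for exponents in the admissible range. We would first check that all Lebesgue exponents in $\dot S_a^{1/2}$ and $\tilde N(I)$ satisfy the constraints $\tfrac{s+\sigma}{3}<\tfrac1p<\min\{1,1-\tfrac\sigma3\}$ with $s\in\{\tfrac12,1\}$. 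When $a>0$ we have $\sigma<0$, so this is automatic. For $a<0$ it is exactly what forces the particular choice of norms (e.g.\ $L^{30/23}$), and verifying it is the only place where care is needed.

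\textbf{Step 1 (short-time perturbation).} Assume in addition that $\|\tilde v\|_{\dot S_a^{1/2}(I)}\le\eta$ for a small $\eta$. Write $v=\tilde v+w$, so that
\[
(i\partial_t-\H_a)w=-(|\tilde v+w|^2(\tilde v+w)-|\tilde v|^2\tilde v)-e.
\]
Apply Strichartz estimates at regularity $\dot H^{1/2}$, with $(\H_a)^{1/4}$ replaced by $|\nabla|^{1/2}$ via Lemma~\ref{L:EquivNorms}. Combined with the fractional product rule, this gives
\[
\|w\|_{\dot S_a^{1/2}}\lesssim \varepsilon+(\eta^2+\|w\|_{\dot S_a^{1/2}}^2)\|w\|_{\dot S_a^{1/2}}.
\]
Here the cubic terms are estimated in the dual space $L^{10/7}_{t,x}$, using $L^5_{t,x}$ and $\dot S^{1/2}$ control. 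A continuity argument then yields $\|w\|_{\dot S_a^{1/2}}\lesssim\varepsilon$. Existence of $v$ on all of $I$ follows from the local theory together with a blowup criterion in the $L^5_{t,x}$ norm. The $H^1$-level bound $\|v\|_{S_a^1}\lesssim 1$ comes from the same Strichartz argument at regularity $\dot H^1$ and $L^2$. Since these estimates are linear in the highest-order norm of $v$, they close given the $L^5_{t,x}$ control.

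\textbf{Step 2 (upgrade $L^5$ to Strichartz control of $\tilde v$).} From $\|\tilde v\|_{L^5_{t,x}}\le L$, split $I$ into $O_L(1)$ intervals on which $\|\tilde v\|_{L^5}$ is small. On each such interval, Strichartz estimates for the $\tilde v$ equation, with the error $e$ treated perturbatively, give $\|\tilde v\|_{S_a^1}\lesssim E$ there. Summing over intervals bounds $\|\tilde v\|_{S_a^1(I)}$, and hence $\|\tilde v\|_{\dot S_a^{1/2}(I)}$, by a constant $C(E,L)$.

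\textbf{Step 3 (iteration).} Partition $I$ into $J=J(E,L,\eta)$ intervals $I_j$ on which $\|\tilde v\|_{\dot S_a^{1/2}(I_j)}\le\eta$, and apply Step 1 successively. At each step the new initial discrepancy $\|w(t_j)\|_{\dot H^{1/2}}$ is bounded by $C\varepsilon$ times the previous bound. The errors therefore grow like $C^J\varepsilon$, and choosing $\varepsilon_0(E,L)$ small enough keeps every step in the perturbative regime. Summing the bounds over $j$ gives both conclusions.

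The main obstacle is bookkeeping rather than any new idea: the nonlinear estimates at half a derivative have to be phrased so that every application of the chain rule occurs in a norm covered by Lemma~\ref{L:EquivNorms}. This is why the error is measured in $|\nabla|^{1/2}$ in $\tilde N(I)$ rather than in $(\H_a)^{1/4}$.
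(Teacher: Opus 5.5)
The paper gives no proof of this proposition; it imports it from \cite{KVMZ}. Your architecture is the standard one behind that result: short-time perturbation at the $\dot H^{1/2}$ level, partitioning by $\|\tilde v\|_{L^5_{t,x}}$, iteration with $C^J\varepsilon$ error growth, and persistence of $H^1$ regularity for the true solution $v$. However, three steps fail as written.

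\textbf{The exponent constraints are not automatic for $a>0$.} For $s=1$, the bound $\|\nabla f\|_{L^p}\lesssim\|(\H_a)^{1/2}f\|_{L^p}$ in Lemma~\ref{L:EquivNorms} requires $\frac{1+\sigma}{3}<\frac1p$. This restriction is genuine: for $\sigma<0$ and generic $f\in C_c^\infty(\R^3\backslash\{0\})$, $\nabla(\H_a)^{-1/2}f$ behaves like $|x|^{-1-\sigma}$ near the origin. The reverse bound requires $p<3$ for every $a$.

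Your $H^1$-level estimate puts the cubic term in $L^{10/7}_{t,x}$, and so needs $\nabla v\in L^{10/3}_{t,x}$ controlled by $(\H_a)^{1/2}v$. That requires $\frac{1+\sigma}{3}<\frac{3}{10}$, i.e.\ $a>\frac{11}{100}$, so the bound does not close for $0<a\le\frac{11}{100}$.

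The repair is to use $\||v|^2\nabla v\|_{L^{5/3}_tL^{30/23}_x}\le\|v\|_{L^5_{t,x}}^2\|\nabla v\|_{L^5_tL^{30/11}_x}$. Then the two directions of Lemma~\ref{L:EquivNorms} need only $\frac{1+\sigma}{3}<\frac{11}{30}$ and $\frac13<\frac{23}{30}$, both of which hold for every $a>0$. This is why $L^{5/3}_tL^{30/23}_x$ appears in $\tilde N(I)$, and $L^5_tH^{1,30/11}_x$ in $Z(I)$, even in the repulsive case.

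Your $\dot H^{1/2}$-level exponents are fine for $a>0$. For $a<0$, which the statement also covers, you verify nothing. There the $L^{10/3}$ choice at $s=\frac12$ already fails once $\sigma\geq\frac25$.

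\textbf{Step~2 cannot give $\|\tilde v\|_{S^1_a}\lesssim E$.} The only hypothesis on $e$ is $\||\nabla|^{1/2}e\|_{\tilde N(I)}<\varepsilon$, so neither $e$ nor $\nabla e$ is controlled, and $\tilde v$ need not remain in $H^1$. You only need $\|\tilde v\|_{\dot S^{1/2}_a(I)}\lesssim_{E,L}1$. That follows from the same partition argument run at the $\dot H^{1/2}$ level, using $\|\tilde v_0\|_{\dot H^{1/2}}\le E$. The $S^1_a$ bound in the conclusion concerns $v$ alone. It follows from $\|v_0\|_{H^1}\le E$ and $\|v\|_{L^5_{t,x}}\le L+C_{E,L}\varepsilon$.

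\textbf{The smallness must be in the right norms.} The smallness assumption in Step~1 and the partition in Step~3 cannot be imposed on the full $\dot S_a^{1/2}(I_j)$ norm. That norm contains $L^\infty_t\dot H^{1/2}_x$, which does not become small on short intervals. Impose smallness instead on the finite-time-exponent norms that actually enter your nonlinear estimate, such as $\|\tilde v\|_{L^5_{t,x}}$ and $\||\nabla|^{1/2}\tilde v\|_{L^{10/3}_{t,x}}$ (or $L^5_tL^{30/11}_x$).
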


\section{Uniqueness for the radial ground state}\label{S:uniqueness}

In the repulsive case $a>0$, the Gagliardo--Nirenberg inequality does not admit optimizers in $H^1$ (cf. \cite[Theorem 3.1]{KVMZ}).  If one restricts to radial functions, however, then compactness is restored and one obtains a radial optimizer denoted $Q_a$ (which then yields the ground state solution $e^{it}Q_a$ to \eqref{NLSa}):
 
\begin{thm}[Sharp Gagliardo-Nirenberg inequality, \cite{KVMZ}]\label{T:GN} Let $a>0$ and set
 \[
 C_a := \sup \left\{ \|f\|_{L^4}^4 \div \bigl[\|f\|_{L^2} \|f\|_{\dot{H}^1_a}^3\bigr] : f \in H_{\text{rad}}^1\backslash\{0\}\right\}.
 \]
 Then $C_a \in (0, \infty)$ and equality is attained by a function $Q_a\in H_a^1$, which is a nonnegative radial solution to 
 \begin{equation} \label{GroundStateEq}
-\H_a Q_a - Q_a + Q_a^3 = 0.
 \end{equation}
Additionally, we have the Pohozaev identities 
\begin{equation} \label{pohozaev}
 \|Q_a\|_{L^2}^2 = \tfrac{1}{3} \| Q_a\|_{\dot{H}^1_a}^2 = \tfrac{1}{4} \|Q_a\|_{L^4}^4 . 
\end{equation}
\end{thm}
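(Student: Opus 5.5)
We argue by the standard variational scheme: finiteness and positivity of $C_a$, attainment via a maximizing sequence, then the Euler--Lagrange equation and Pohozaev identities.

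\textbf{Finiteness and positivity.} Positivity is immediate by testing any nonzero radial Schwartz function. For finiteness, note that for $a>0$ we have $\|f\|_{\dot H^1}\le\|f\|_{\dot H^1_a}$. The standard Gagliardo--Nirenberg inequality $\|f\|_{L^4}^4\lesssim\|f\|_{L^2}\|f\|_{\dot H^1}^3$ then gives $C_a\le C_0<\infty$.

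\textbf{Existence of a maximizer.} Take a maximizing sequence $f_n\in H^1_{\text{rad}}$. We may replace $f_n$ by $|f_n|$: this preserves the $L^2$ and $L^4$ norms and does not increase $\|f\|_{\dot H^1_a}$, by the diamagnetic inequality $|\nabla|f||\le|\nabla f|$, the potential term depending only on $|f|$. The quotient is invariant under $f\mapsto \alpha f(\lambda\cdot)$, so we normalize $\|f_n\|_{L^2}=\|f_n\|_{\dot H^1_a}=1$. Then $f_n$ is bounded in $H^1$, and since $a>0$ the $\dot H^1$ norm is controlled as well.

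Passing to a subsequence, $f_n\rightharpoonup Q$ weakly in $H^1$. Strauss's compactness of $H^1_{\text{rad}}\hookrightarrow L^4$ gives strong $L^4$ convergence. Alternatively, use Lemma~\ref{L:RadialGNI} to control the tails $|x|>R$ uniformly, together with Rellich on $|x|\le R$. Hence $\|Q\|_{L^4}^4=\lim\|f_n\|_{L^4}^4=C_a>0$, so $Q\ne0$. By weak lower semicontinuity of $\|\cdot\|_{L^2}$ and of the quadratic form $\|\cdot\|_{\dot H^1_a}$ (which is a nonnegative closed form), we get
\[
\|Q\|_{L^4}^4\ge C_a\|Q\|_{L^2}\|Q\|_{\dot H^1_a}.
\]
Thus $Q$ is a nonnegative radial maximizer, and equality forces both norms to equal $1$.

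\textbf{Euler--Lagrange equation and Pohozaev identities.} Differentiate $\log$ of the quotient at $Q$ in radial directions $\varphi$. The symmetric criticality principle lets us pass to all $\varphi\in H^1$, since the functional is rotation invariant. This gives
\[
\frac{4\langle Q^3,\varphi\rangle}{\|Q\|_4^4}-\frac{\langle Q,\varphi\rangle}{\|Q\|_2^2}-\frac{3\langle \H_aQ,\varphi\rangle}{\|Q\|_{\dot H^1_a}^2}=0,
\]
that is, $\alpha\H_aQ+\beta Q=\gamma Q^3$ weakly, with explicit positive constants. Rescaling $Q_a=\mu Q(\nu\cdot)$ to normalize the coefficients yields \eqref{GroundStateEq}; the rescaled function is still a maximizer.

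For the Pohozaev identities, test \eqref{GroundStateEq} against $Q_a$ to get
\[
\|Q_a\|_{\dot H^1_a}^2+\|Q_a\|_2^2=\|Q_a\|_4^4.
\]
For the second relation, use that $\frac{d}{d\lambda}$ of the quotient along $\lambda^{3/2}Q_a(\lambda\cdot)$ vanishes. Alternatively, read off the coefficient relations directly from the Euler--Lagrange equation: at a maximizer, the coefficients $4/\|Q\|_4^4$, $1/\|Q\|_2^2$, $3/\|Q\|_{\dot H^1_a}^2$ must match $(1,1,1)$ after scaling. Either route gives $3\|Q_a\|_2^2=\|Q_a\|_{\dot H^1_a}^2$ and $4\|Q_a\|_2^2=\|Q_a\|_4^4$.

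\textbf{Main obstacle.} The delicate points are regularity and justification near the origin. We need the weak formulation with the singular potential, and $Q_a\in H^1_a$ makes sense through the form domain. The mass-critical scaling argument behind the Pohozaev identities also requires $x\cdot\nabla Q_a$ pairings, but these are avoided by differentiating the invariant quotient along the dilation family, which is smooth in $\lambda$ in $H^1$.
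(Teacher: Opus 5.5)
Your overall scheme is correct: the bound $C_a\le C_0$ from $\|f\|_{\dot H^1}\le\|f\|_{\dot H^1_a}$, a normalized nonnegative radial maximizing sequence, Strauss compactness into $L^4$, weak lower semicontinuity, the Euler--Lagrange equation via symmetric criticality, and a final rescaling. This is the standard Weinstein-type argument behind the result of \cite{KVMZ}, which the paper imports without proof. One typo: the lower-semicontinuity display is missing a cube and should read $\|Q\|_{L^4}^4\ge C_a\|Q\|_{L^2}\|Q\|_{\dot H^1_a}^3$.

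The step that does not work is your first justification of $3\|Q_a\|_{L^2}^2=\|Q_a\|_{\dot H^1_a}^2$. The quotient is \emph{exactly} invariant under $f\mapsto\lambda^{3/2}f(\lambda\cdot)$; you used this invariance yourself to normalize. So its $\lambda$-derivative along that family vanishes identically and gives only $0=0$. Your closing remark, that the $x\cdot\nabla Q_a$ pairings are avoided this way, fails for the same reason. The classical dilation argument differentiates the \emph{action} $\tfrac12\|f\|_{\dot H^1_a}^2+\tfrac12\|f\|_{L^2}^2-\tfrac14\|f\|_{L^4}^4$, not the quotient. That version does require differentiability of the dilation curve in $H^1$ at $Q_a$, or an additional regularity argument.

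The real information comes from how the coefficients are normalized, which is your second route; make that the actual argument. Subtract $3\|Q_a\|_{\dot H^1_a}^{-2}$ times \eqref{GroundStateEq} from the Euler--Lagrange equation of the quotient at the maximizer $Q_a$. This gives
\[
\Bigl(\tfrac{4}{\|Q_a\|_{L^4}^4}-\tfrac{3}{\|Q_a\|_{\dot H^1_a}^2}\Bigr)Q_a^3=\Bigl(\tfrac{1}{\|Q_a\|_{L^2}^2}-\tfrac{3}{\|Q_a\|_{\dot H^1_a}^2}\Bigr)Q_a .
\]
Both coefficients must vanish because $Q_a$ and $Q_a^3$ are linearly independent. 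Otherwise $Q_a$ would be a.e.\ a constant multiple of an indicator function, which is impossible for a nonzero $H^1$ function. Vanishing of the two coefficients is exactly \eqref{pohozaev}.

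Even more directly, normalize $\|Q\|_{L^2}=\|Q\|_{\dot H^1_a}=1$ and $\|Q\|_{L^4}^4=C_a$. Then your rescaling $Q_a=\mu Q(\nu\cdot)$ forces $\nu=\sqrt3$ and $\mu^2=4/C_a$. This gives $\|Q_a\|_{L^2}^2=\mu^2\nu^{-3}$, $\|Q_a\|_{\dot H^1_a}^2=\mu^2\nu^{-1}$, and $\|Q_a\|_{L^4}^4=\mu^4\nu^{-3}C_a$, from which \eqref{pohozaev} follows at once, with no pairing against $x\cdot\nabla Q_a$.
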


The goal of this section is to establish the uniqueness of the ground state $Q_a$.

\begin{thm}[Uniqueness of the radial ground state]\label{T:UniqGS}
Let $a>0$.  There exists a unique nonnegative radial $H^1$ solution to \eqref{GroundStateEq}.
\end{thm}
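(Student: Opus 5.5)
The plan is to reduce the radial equation \eqref{GroundStateEq} to an ODE of the type covered by the uniqueness theory of \cite{NK} (a Kwong/Kabeya--Tanaka-type framework for radial solutions of $(r^{\alpha}u')' + r^{\beta}g(u)=0$-type problems, or equivalently quasilinear ODEs with weights). Writing $Q_a(x)=q(r)$ with $r=|x|$, the equation becomes
\[
q'' + \tfrac{2}{r}q' - \tfrac{a}{r^2}q - q + q^3 = 0,\qquad r>0.
\]
The inverse-square term is removed by the substitution $q(r)=r^{\sigma_+}w(r)$ with $\sigma_+ = -\tfrac12+(\tfrac14+a)^{1/2}>0$, the positive root of $\sigma(\sigma+1)=a$; this is the natural choice, since finite-energy solutions of $\H_a$ behave like $r^{\sigma_+}$ near the origin. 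A direct computation then gives
\[
w'' + \tfrac{2+2\sigma_+}{r}w' - w + r^{2\sigma_+}w^3=0,
\]
that is, $(r^{N-1}w')' + r^{N-1}\bigl(-w + r^{2\sigma_+}w^3\bigr)=0$ with fictitious dimension $N=3+2\sigma_+>3$. This is a Hénon-type equation with weight $K(r)=r^{\ell}$, $\ell=2\sigma_+$. A further change of independent variable $r=s^{\gamma}$ can be used, if needed, to bring this into exactly the normalized form of \cite{NK} (for instance, to absorb the weight into a power of $s$ in front of both terms).

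The next step is to identify the class of solutions. A nonnegative radial $H^1$ solution $Q_a$ must be strictly positive by the strong maximum principle away from the origin, and it must be smooth on $(0,\infty)$. The ODE analysis (Frobenius at $r=0$, using that $H^1$ finiteness excludes the $r^{\sigma_-}$ branch) shows $w$ extends to a $C^1$ function at $0$ with $w'(0)=0$ and $w(0)>0$. Standard exponential decay holds at infinity. Thus uniqueness of $Q_a$ is equivalent to uniqueness of positive solutions of the $w$-equation that are regular at the origin and decay at infinity.

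Then I will verify the hypotheses of \cite{NK}. These are typically conditions on a function such as
\[
G(s)=\frac{s\,g'(s)}{g(s)}
\]
or on the weight exponents (for the nonlinearity $f(r,w)=-w+r^{\ell}w^3$). The key monotonicity condition is the main obstacle: one must check that the relevant auxiliary quantity changes sign at most once or is monotone on the range of $r$. For power weights this should reduce to an explicit inequality in $N$, $\ell$ and the exponent $p=3$, such as a subcriticality condition $p<\frac{N+2+2\ell}{N-2}$. Here that is $3<\frac{5+6\sigma_+}{1+2\sigma_+}$, i.e. $3+6\sigma_+<5+6\sigma_+$, which holds. I expect the delicate point to be matching precisely which structural hypotheses of \cite{NK} apply once the weight multiplies only the cubic term; if a direct match fails, I would first try the Emden--Fowler-type change of variables $r=s^{\gamma}$ with $\gamma$ chosen to equalize the effective dimensions and then recheck the conditions. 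With the hypotheses verified, \cite{NK} yields uniqueness of the positive decaying regular solution $w$, hence of $Q_a$.
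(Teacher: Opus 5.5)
Your reduction is the same as the paper's. With $\beta=\sqrt{1+4a}$ and $Q=r^{\sigma_+}w$, $\sigma_+=\frac{\beta-1}{2}$, you get $w''+\frac{f'}{f}w'+g\,w+h\,w^3=0$ with $f=r^{1+\beta}$, $g=-1$, $h=r^{\beta-1}$. This is already the form in Theorem~\ref{ThmUniqueness} (Shioji--Watanabe, \cite{NK}), so no further substitution $r=s^\gamma$ is needed.

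The gap is the step that constitutes the whole proof: you never identify or verify the hypotheses of that theorem, and the inequality you do check is not one of them. The theorem rests on a generalized Pohozaev identity. Set $a(r)=f^{4/3}h^{-1/3}$, $b=-\frac12a'+\frac{f'}{f}a$, $c=-b'+\frac{f'}{f}b$ and $G=-bg+\frac12c'+\frac12(ag)'$. Then every solution satisfies
\[
\tfrac{d}{dr}\Bigl[\tfrac12 a\,(w')^2+b\,w w'+\tfrac12(c+ag)\,w^2+\tfrac14 ah\,w^4\Bigr]=G\,w^2 .
\]
Uniqueness then requires:
\begin{itemize}
\item conditions (1)--(3) on $f,g,h$ near $r=0$;
\item the boundary conditions (4)--(5) on $a(r),b,ag,ah,c$ as $r\to0$;
\item above all (6)--(7): $G$ changes sign at most once, from $\ge0$ to $\le0$, with $G^-\not\equiv0$.
\end{itemize}
No condition on a quotient like $sg'(s)/g(s)$ enters. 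The subcriticality inequality $3<\frac{N+2+2\ell}{N-2}$ is not a hypothesis either; it is the Pohozaev obstruction to existence, not a uniqueness criterion. As it stands, your argument reduces the theorem to ``the hypotheses of \cite{NK} hold,'' which is exactly what has to be proved.

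The missing verification is a short computation. Here $a(r)=r^{\frac53+\beta}$, $b=\frac{1+3\beta}{6}r^{\frac23+\beta}$, $c=\frac{1+3\beta}{18}r^{\beta-\frac13}$, and
\[
G(r)=\tfrac{r^{\beta-\frac43}}{108}\bigl[(9\beta^2-1)-72r^2\bigr].
\]
\begin{itemize}
\item Conditions (4)--(5) hold since $\beta>1$.
\item $G>0$ on $(0,R_0)$ and $G<0$ on $(R_0,\infty)$, with $R_0=\sqrt{(9\beta^2-1)/72}$, which gives (6)--(7).
\item Conditions (1)--(3) are direct from $f(|g|+h)=\tau^{1+\beta}+\tau^{2\beta}$, $\int_\tau^1 f^{-1}\,d\rho=\frac1\beta(\tau^{-\beta}-1)$, and $f^{-1}\notin L^1(0,1)$.
\end{itemize}
The paper writes $b$ with coefficient $\frac16-\frac12\beta$, apparently dropping the $\beta$ in $f'/f=(1+\beta)/r$. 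With that $b$ the $w^4$ terms in the identity above do not cancel, and the paper concludes $G\le0$ everywhere, i.e.\ $R=0$. The correct $G$ instead changes sign once at $R_0>0$, but (6)--(7) still hold, so the conclusion is unaffected.

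Finally, your claim $w(0)\in(0,\infty)$ is what the paper obtains from Lemma~\ref{L:FirstAsymp} and the corollary after it. A Frobenius sketch must justify two things:
\begin{itemize}
\item the cubic term is perturbative at the origin (radial $\dot H^1$ gives $|Q|\lesssim r^{-1/2}$, so $Q^2\lesssim r^{-1}=o(r^{-2})$);
\item the coefficient of the $r^{\sigma_+}$ branch is nonzero.
\end{itemize}
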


The existence of at least one nonnegative, radial $H^1$ solution is guaranteed by Theorem~\ref{T:GN}.  Hence, it remains only to establish uniqueness. We begin by letting $Q$ denote any such solution and use the equation to derive some asymptotic properties.  

Writing $Q=Q(r)$ as a function of the radial variable $r\in(0,\infty)$, we first rewrite \eqref{GroundStateEq} as 
\begin{equation}\label{GS_positive_a}
Q_{rr} + \tfrac{2}{r} Q_r -\tfrac{a}{r^2} Q +Q^3 -Q =0,
\end{equation}
where subscripts denote differentiation with respect to $r$.

The arguments of \cite{UNIQ} already show that $Q$ decays faster than $r^{-m}$ for any $m>0$ as $r\to\infty$. In fact, we can prove that $Q$ decays exponentially at infinity: 
\begin{prop}\label{Expdecay} For any $\epsilon>0$ and $r\geq 1$,  
\[
Q(r) \lesssim e^{-(1-\epsilon)r}.
\]
\end{prop}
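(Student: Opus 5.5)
We will prove Proposition~\ref{Expdecay} by a comparison (maximum principle) argument for the radial ODE \eqref{GS_positive_a}, using the polynomial decay already known from \cite{UNIQ} to treat the nonlinearity as a small perturbation at large $r$. Since $Q\ge 0$ solves \eqref{GroundStateEq} and $Q(r)\to 0$ as $r\to\infty$, given $\epsilon>0$ we fix $R_0\ge 1$ such that $Q(r)^2\le \epsilon$ for $r\ge R_0$. On $\{|x|>R_0\}$ we then have
\[
-\Delta Q + \tfrac{a}{|x|^2}Q + (1-Q^2)Q = 0,
\]
and because $a>0$ and $1-Q^2\ge 1-\epsilon$, we get $-\Delta Q + (1-\epsilon)Q \le 0$ there. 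So $Q$ is a nonnegative subsolution of $-\Delta + (1-\epsilon)$ in the exterior region. The repulsive potential only helps; this is the one place the sign of $a$ enters.

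For the comparison we take the explicit radial supersolution $\varphi(r)=C e^{-\mu r}$ with $\mu=\sqrt{1-\epsilon}$. A direct computation gives $-\Delta\varphi+(1-\epsilon)\varphi = \tfrac{2\mu}{r}\varphi\ge 0$. Alternatively one could use the exact solution $e^{-\mu r}/r$. We choose $C$ so that $\varphi(R_0)\ge Q(R_0)$; this is finite since $Q$ is continuous and bounded away from the origin. Set $w=Q-\varphi$ on $\{|x|>R_0\}$. Then
\[
-\Delta w + (1-\epsilon)w\le 0 \quad\text{in } |x|>R_0, \qquad w\le 0 \text{ on } |x|=R_0, \qquad w\to 0 \text{ as } |x|\to\infty.
\]
By the weak maximum principle on annuli $R_0<|x|<R$, letting $R\to\infty$, we get $w\le 0$, i.e.\ $Q(r)\le Ce^{-\sqrt{1-\epsilon}\,r}$. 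Since $\sqrt{1-\epsilon}\ge 1-\epsilon$, this yields the claim for $r\ge R_0$. On $[1,R_0]$ the bound holds trivially after enlarging the constant, since $Q$ is bounded there.

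The main technical point is justifying the maximum principle. This requires $Q$ to be a classical (or at least $C^2$) solution on $r\ge 1$, which follows from standard ODE/elliptic regularity away from the origin, and it requires $Q(r)\to0$ as $r\to\infty$. The decay faster than any power from \cite{UNIQ} gives the latter directly. Alternatively, the Strauss radial lemma $|Q(r)|\lesssim r^{-1}\|Q\|_{H^1}$ gives it, with Lemma~\ref{L:EquivNorms} or just $Q\in H^1$.

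If one prefers to avoid PDE maximum principles, the same argument can be run purely at the ODE level. Set $v=rQ$; the equation becomes
\[
v''=\bigl(1+\tfrac{a}{r^2}-Q^2\bigr)v\ge (1-\epsilon)v \quad\text{for } r\ge R_0 .
\]
Comparing $v$ with $Ce^{-\mu r}$ then works because $v\to0$ and $v\ge0$: the function $z=v-Ce^{-\mu r}$ satisfies $z''\ge(1-\epsilon)z$. So $z$ cannot have a positive interior maximum, and $z\le 0$ on $[R_0,\infty)$. This gives the even better bound $Q\lesssim r^{-1}e^{-\sqrt{1-\epsilon}\,r}$.
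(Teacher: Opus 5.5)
Your proposal is correct and is essentially the paper's argument. The paper sets $v=rQ$, writes $v''=W(r)v$ with $W=1+\tfrac{a}{r^2}-Q^2\geq 1-\epsilon$ for $r\geq R_0$, and rules out a positive maximum of $z=v-C_0e^{-\sqrt{1-\epsilon}\,r}$; this is exactly your final ODE paragraph and gives the same bound $Q\lesssim r^{-1}e^{-\sqrt{1-\epsilon}\,r}$. Your primary PDE version with the barrier $Ce^{-\mu r}$ is the same comparison argument carried out before the reduction $v=rQ$, and it only needs $Q\to 0$ (e.g.\ from the Strauss lemma) rather than $rQ\to 0$.
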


\begin{proof} Define $v(r)=rQ(r)$, which satisfies $v(r)\to 0$ as $r\to\infty$ and 
\[
v_{rr} - W(r) v = 0,\qtq{where} W(r):=1+\tfrac{a}{r^2} -\tfrac{v^2}{r^2}.
\]
Now, given $\epsilon>0$, let $R_0\geq 1$ be large enough that $W(r)\geq 1-\epsilon$ for any $r\geq R_0$.  Let us also choose $C_0$ large enough that
\[
v(R_0)\leq C_0 e^{-\lambda R_0},\qtq{where} \lambda:=\sqrt{1-\epsilon}. 
\]

We now introduce
\[
z(r) = v(r) - C_0 e^{-\lambda r},
\]
which satisfies $z(R_0)\leq0$ and $z(r)\to 0$ as $r\to\infty$. Thus
\[
z''(r)-W(r)z(r) = -C_0 e^{-\lambda r}(\lambda^2-W(r))>0 \qtq{for all} r>R_0. 
\]
We now claim that $z(r)\leq 0$ for all $r\geq R_0$. Indeed, suppose instead that $z$ attains a positive maximum at $r=r_0\geq R_0$, so that $z''(r_0)\leq 0$.  As $W(r_0)>0$, this yields
\[
z''(r_0)-W(r_0)z(r_0)<0,
\]
a contradiction.  We therefore obtain $z(r)\leq 0$ for $r\geq R_0$, which in turn implies
\[
Q(r) \lesssim \tfrac{1}{r}e^{-\lambda r},
\]
as desired. \end{proof}

We next consider the behavior of $Q$ at $r=0$. We adopt the strategy in \cite{yang2026uniqueness}, which addressed the case $a<0$. Following the arguments of \cite[Lemma~2.5]{yang2026uniqueness} directly, we can obtain the following.

\begin{lem}[Bounds at $r=0$, \cite{yang2026uniqueness}]\label{L:FirstAsymp} Define
\[
u_1(r) = r^{\frac12}Q(r) \qtq{and} u_2(r) = r^{\frac32} Q'(r).
\]
Then 
\[
\lim_{r\to 0} u_j(r) = 0 \qtq{for}j=1,2.
\]
\end{lem}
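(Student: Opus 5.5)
The plan is to exploit the fact that $Q\in H^1_{\text{rad}}$ with $a>0$ forces integrability of $|x|^{-2}Q^2$, and then to convert this integrated information into pointwise limits. The key tools are the ODE \eqref{GS_positive_a} and an energy-type identity.

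\textbf{Finite weighted integrals.} First I record what the hypotheses give in radial coordinates. Since $\|Q\|_{\dot H^1_a}<\infty$ and $a>0$, we have
\[
\int_0^\infty Q^2\,dr=\int_0^\infty r^{-2}Q^2 r^2\,dr<\infty \qtq{and} \int_0^\infty Q_r^2 r^2\,dr<\infty .
\]
Next, the function $u_1^2 = rQ^2$ satisfies $(u_1^2)' = Q^2 + 2rQQ_r$. By Cauchy--Schwarz, $rQQ_r$ is integrable near $0$, since $\int_0^1 |Q|\,|rQ_r|\,dr\le (\int Q^2)^{1/2}(\int r^2Q_r^2)^{1/2}$. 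Hence $(u_1^2)'\in L^1(0,1)$, so $\lim_{r\to0}u_1^2$ exists. Call this limit $L\ge 0$. If $L>0$, then $Q^2\sim L/r$ near $0$, which contradicts $\int_0^1 Q^2\,dr<\infty$. Therefore $u_1\to0$.

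\textbf{The derivative term.} For $u_2$ I rewrite the ODE as $(r^2Q_r)' = r^2\bigl(\tfrac{a}{r^2}Q + Q - Q^3\bigr) = aQ + r^2Q - r^2Q^3$. Near $0$ the right side is integrable. The term $aQ$ is in $L^1(0,1)$ because $Q\in L^2(0,1)$. Since $|Q|\le o(r^{-1/2})$ by the first step, $r^2Q^3 \lesssim r^{1/2}$ is bounded. Consequently $r^2Q_r$ has a limit $\ell$ as $r\to0$.

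If $\ell\neq0$, then $Q_r\sim \ell r^{-2}$, which makes $\int_0^1 r^2Q_r^2\,dr=\infty$. So $\ell=0$, and $r^2Q_r(r) = \int_0^r (aQ + s^2Q - s^2Q^3)\,ds$. Using $|Q(s)|\le \epsilon(s)s^{-1/2}$ with $\epsilon(s)\to0$, the integral is $o(r^{1/2})$. It follows that $r^{3/2}Q_r = r^{-1/2}\cdot o(r^{1/2}) \to 0$, giving $u_2\to0$.

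\textbf{Main obstacle.} The delicate point is the first step: getting existence of the limit of $u_1$ requires the Cauchy--Schwarz integrability of $rQQ_r$. A priori we only know $Q\in H^1$ and that it solves the ODE distributionally on $(0,\infty)$. Standard ODE regularity gives $Q\in C^2(0,\infty)$, so every manipulation above is legitimate on compact subintervals, and the limits are then taken as $r\to0$.
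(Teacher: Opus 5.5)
Your argument is correct and complete. The paper does not prove this lemma itself; it only asserts that the argument of \cite[Lemma~2.5]{yang2026uniqueness}, written for $a<0$, carries over directly. So your proposal is a self-contained replacement rather than a reproduction of a proof in the text.

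It fits the rest of the section as follows. Your two integrability inputs, $\int_0^1 Q^2\,dr<\infty$ and $\int_0^1 r^2Q_r^2\,dr<\infty$, say exactly that $u_1,u_2\in L^2(ds)$ near $s=\infty$ in the variable $s=-\ln r$ used in the subsequent corollary. Your first step is the observation that $\frac{d}{ds}(u_1^2)\in L^1$ there. You therefore run an $L^2$-plus-derivative decay argument in the original radial variable. For $u_2$ you use the divergence form $(r^2Q_r)'=aQ+r^2Q-r^2Q^3$ rather than the first-order system.

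Your argument never uses the sign of $a$. The bound $\int_0^\infty Q^2\,dr=\frac{1}{4\pi}\||x|^{-1}Q\|_{L^2}^2<\infty$ follows from Hardy's inequality for any $Q\in H^1$, which is a cleaner justification than appealing to $a>0$. Likewise $aQ\in L^1(0,1)$ holds for either sign. This is consistent with the paper's claim that the $a<0$ argument transfers verbatim.

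Your final step also yields more than the qualitative statement. It gives the bound $|u_2(r)|\lesssim \sup_{0<s\le r}|u_1(s)|$ for $r\le 1$, so the vanishing of $u_2$ is controlled by that of $u_1$.
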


We can then obtain the following asymptotic behavior as $r\to 0$.

\begin{cor} Let 
\[
\beta=\sqrt{1+4a}.
\]
Then
\[
Q(r) \sim r^{-\frac12+\frac12\beta} \qtq{and} Q'(r) \sim r^{-\frac32+\frac12\beta}\qtq{as}r\to 0. 
\]
\end{cor}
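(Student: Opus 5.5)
We need to show $Q(r)\sim r^{\gamma}$ with $\gamma=-\tfrac12+\tfrac12\beta$, the larger root of the indicial equation $\gamma(\gamma+1)=a$. We also need $Q'\sim r^{\gamma-1}$. The strategy is to treat \eqref{GS_positive_a} near the origin as a perturbation of the Euler equation $Q_{rr}+\tfrac2rQ_r-\tfrac{a}{r^2}Q=0$. Its solutions are $r^{\gamma}$ and $r^{\gamma_-}$, with $\gamma_-=-\tfrac12-\tfrac12\beta$. Lemma~\ref{L:FirstAsymp} is what excludes the singular branch.

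\textbf{Step 1: equation for the rescaled function.} Set $w(r)=r^{-\gamma}Q(r)$ and $f(r):=Q-Q^3$. Substituting into \eqref{GS_positive_a} and using $\gamma^2+\gamma=a$ gives
\[
\bigl(r^{2+2\gamma}w'\bigr)'=r^{2+\gamma}f(r).
\]
Note that $2\gamma+2=1+\beta>1$.

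\textbf{Step 2: $Q$ is bounded near $0$.} By Lemma~\ref{L:FirstAsymp}, $|f(r)|\lesssim r^{-1/2}+r^{-3/2}$ near $0$. Since $r^{2+\gamma}|f|\lesssim r^{\frac12+\frac12\beta}$ is integrable at $0$, the integral $\int_0^r s^{2+\gamma}f(s)\,ds$ converges.

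We next show $\lim_{r\to0}r^{2+2\gamma}w'(r)=0$. We have $r^{2+2\gamma}w'=r^{2+\gamma}Q'-\gamma r^{1+\gamma}Q$. By Lemma~\ref{L:FirstAsymp}, $r^{2+\gamma}Q'=r^{\frac12+\gamma}u_2$ and $r^{1+\gamma}Q=r^{\frac12+\gamma}u_1$. Since $\tfrac12+\gamma=\tfrac12\beta>0$, both terms tend to $0$.

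Integrating from $0$ then yields $w'(r)=r^{-2-2\gamma}\int_0^r s^{2+\gamma}f(s)\,ds$. This rules out the $r^{\gamma_-}$ component.

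Now bootstrap. If $|Q|\lesssim r^{-\mu}$ near $0$, then $|f|\lesssim r^{-\mu}+r^{-3\mu}$, so $|w'|\lesssim r^{-3\mu+1-\gamma}$. Hence $|w|\lesssim 1+r^{2-\gamma-3\mu}$, and therefore $|Q|\lesssim r^{\gamma}+r^{2-3\mu}$. We start from $\mu=\tfrac12$. The map $\mu\mapsto 3\mu-2$ sends $\tfrac12$ to $-\tfrac12$ after one step, which already gives $|Q|\lesssim r^{\gamma}+r^{1/2}$. This yields boundedness of $Q$ when $\gamma\ge0$, which holds since $a>0$. The main obstacle is making sure this bootstrap closes; for $a>0$ a single iteration suffices.

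\textbf{Step 3: the limit of $w$ exists and is positive.} With $Q$ bounded, $|f|\lesssim1$, so $|w'|\lesssim r^{1-\gamma}$. This is integrable at $0$, so $w(r)\to c:=w(0^+)$.

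To show $c\neq0$, argue by contradiction. If $c=0$, then $w(r)=\int_0^r w'$, and $|f|\lesssim|Q|=r^{\gamma}|w|$. Thus $|w'(r)|\lesssim r^{-2-2\gamma}\int_0^r s^{2+2\gamma}|w|\,ds\lesssim r\sup_{(0,r)}|w|$. Integrating gives $\sup_{(0,r)}|w|\lesssim r^2\sup_{(0,r)}|w|$, so $w\equiv0$ near $0$. By ODE uniqueness away from $r=0$, this forces $Q\equiv0$, which is excluded since $Q$ is a nontrivial solution.

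Since $Q\ge0$, we get $c>0$, and hence $Q\sim r^{\gamma}$.

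\textbf{Step 4: the derivative.} From $Q'=\gamma r^{\gamma-1}w+r^{\gamma}w'$ and $|w'|\lesssim r^{1-\gamma}$, we get $Q'=\gamma c\,r^{\gamma-1}+O(r)$. Since $\gamma>0$ (as $a>0$), this gives $Q'\sim r^{\gamma-1}=r^{-\frac32+\frac12\beta}$, with constant $\gamma c>0$.
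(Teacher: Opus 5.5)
\textbf{Gap: the argument as written only covers $0<a<6$.} Your reduction $(r^{2+2\gamma}w')'=r^{2+\gamma}f$ is correct. So is your use of Lemma~\ref{L:FirstAsymp} to kill the boundary term $r^{2+2\gamma}w'(r)$ at $r=0$, which excludes the $r^{\gamma_-}$ branch. The first bootstrap step giving boundedness of $Q$ also works. (A harmless slip: $r^{2+\gamma}\cdot r^{-3/2}=r^{\frac12\beta}$, not $r^{\frac12+\frac12\beta}$.)

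The problem is in Steps 3 and 4, where you use only $|f|\lesssim 1$. That gives $|w'|\lesssim r^{1-\gamma}$, which is integrable at $0$ only when $\gamma<2$, i.e. $\beta<5$, i.e. $a<6$. For $a\ge 6$ you have not shown that $w$ has a limit. Integrating the bound only yields $|w|\lesssim 1+r^{2-\gamma}$, hence $|Q|\lesssim r^{2}$, which is weaker than the claimed $r^{\gamma}$. Likewise, in Step 4 the error term $r^{\gamma}w'=O(r)$ is $o(r^{\gamma-1})$ only when $\gamma<2$. So your remark that a single iteration suffices is true for boundedness of $Q$, but not for the asymptotics. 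The statement is for all $a>0$.

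\textbf{How to repair it.} Keep iterating, now using the linear bound $|f|\le |Q|(1+|Q|^2)\lesssim |Q|=r^{\gamma}|w|$, which is available once $Q$ is bounded.
\begin{itemize}
\item Suppose $|Q|\lesssim r^{\nu}$ with $0\le\nu<\gamma$. Then $|w'|\lesssim r^{-2-2\gamma}\int_0^r s^{2+\gamma+\nu}\,ds\lesssim r^{1-\gamma+\nu}$.
\item Integrating gives $|Q|\lesssim r^{\gamma}+r^{\nu+2}$. In the borderline case $\nu+2=\gamma$ there is an extra factor $|\log r|$, which costs one more step.
\item After finitely many steps $w$ is bounded. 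Then $|f|\lesssim r^{\gamma}$ and $|w'|\lesssim r$.
\item Hence $w\to c$ with $|w-c|\lesssim r^{2}$, and $Q'=\gamma c\,r^{\gamma-1}+O(r^{\gamma+1})$ for every $a>0$.
\end{itemize}
Your proof that $c\neq0$ then goes through unchanged: the absorption $\sup_{(0,r)}|w|\lesssim r^{2}\sup_{(0,r)}|w|$ combined with ODE uniqueness away from $r=0$. This is a nice explicit justification of the nonvanishing of the constant, which the paper simply asserts.

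\textbf{Comparison with the paper.} With the repair, your route is a Frobenius-type, integrating-factor version of the paper's argument. The paper instead writes a first-order system for $(r^{1/2}Q,\,r^{3/2}Q')$ in the variable $s=-\ln r$ and diagonalizes it into modes $e^{\pm\frac12\beta s}$. It then uses $v_j\to0$ to suppress the growing mode. The paper also has to iterate to push the $O(e^{-s})$ errors below $e^{-\frac12\beta s}$ when $\beta$ is large. That is exactly the step your version skipped.
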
 

\begin{proof} Let $u_1,u_2$ be as in Lemma~\ref{L:FirstAsymp}.  We change variables via $s=-\ln(r)$ and define $v_1,v_2$ by
\[
v_1 = \tfrac{1}{\beta}\bigl[(-\tfrac{1}{2}+\tfrac{\beta}{2})u_1-u_2\bigr]\qtq{and} v_2 = \tfrac{1}{\beta}\bigl[(\tfrac{1}{2}+\tfrac{\beta}{2})u_1+u_2\bigr].
\]
Letting $'$ denote $\tfrac{d}{ds}$, we then obtain the following system for $v_1,v_2$:
\begin{align*}
v_1' &= \tfrac{\beta}{2}v_1 + \tfrac{1}{\beta} e^{-2s}(v_1+v_2) - \tfrac{1}{\beta} e^{-s}|v_1+v_2|^2(v_1+v_2), \\
v_2' &=-\tfrac{\beta}{2}v_2 - \tfrac{1}{\beta} e^{-2s}(v_1+v_2)+\tfrac{1}{\beta} e^{-s}|v_1+v_2|^2(v_1+v_2).
\end{align*}
Using Lemma~\ref{L:FirstAsymp}, we have that $v_j\to 0$ as $s\to\infty$ for $j=1,2$. Thus, multiplying the first equation by $e^{-\frac12\beta s}$ and integrating, we can obtain that 
\[
v_1(s)=\mathcal{O}(e^{-s})\qtq{as} s\to\infty.
\]
Similarly, multiplying the second equation by $e^{\frac12\beta s}$ and integrating to infinity, we can derive that 
\[
v_2(s) = C e^{-\frac12\beta s}+\mathcal{O}(e^{-s})\qtq{as}s\to\infty. 
\]

With these bounds in hand, we can iterate to obtain
\[
v_1(s) = \mathcal{O}_k(e^{-ks}) \qtq{and} v_2(s) = Ce^{-\frac12\beta s}+ \mathcal{O}_k(e^{-ks})
\]
for some $k\geq \tfrac12\beta$ and some $C\neq 0$.  Returning to the variables $u_1$ and $u_2$, we obtain $u_j(r(s)) \sim e^{-\frac12\beta s}$, which translates back into the desired asymptotic behavior for $Q$ and $Q'$ as $r\to 0$. \end{proof}

We now turn to the question of uniqueness.  We will show that in the radial regime with $a>0$, uniqueness may be derived from the following theorem, due to Shioji and Watanabe (see \cite[Theorem~1]{NK}). 

\begin{thm}[Uniqueness \cite{NK}]\label{ThmUniqueness} Consider the problem 
\begin{equation}\label{Problem1}
\begin{cases}
v_{rr} + \frac{f_r(r)}{f(r)} v_r + g(r)v + h(r) v^3=0, \quad 0 < r < \infty,\\
v(0) \in (0, \infty), \quad \lim_{r\to\infty} v(r) = 0,
\end{cases}
\end{equation}
where $f,g,h:(0,\infty)\to\R$. Define 
\begin{align*}
\begin{cases}
a(r) = f(r)^{\frac{4}{3}}h(r)^{-\frac{1}{3}}\\
b(r) = -\frac{1}{2}a_r(r) + \frac{f_r(r)}{f(r)}a(r)\\
c(r) = -b_r(r) +\frac{f_r(r)}{f(r)}b(r) \\
G(r) = -b(r)g(r) + \frac{1}{2}c_r(r) + \frac{1}{2}(ag)_r(r).
\end{cases}
\end{align*}
Assume the following conditions hold:
\begin{itemize}
\item[$(1)$] $\displaystyle\lim_{r \to 0^+} f(r) < \infty$,
\vspace{2pt}
\item[$(2)$] $\displaystyle\lim_{r \to 0^+} \frac{1}{f(r)}\displaystyle\int_0^r f(\tau) (|g(\tau)| + h(\tau))\, d\tau = 0$,
\vspace{3pt}
\item[$(3)$] There exists $R \in (0,\infty)$ such that 
\begin{itemize}
\vspace{2pt}
\item[$(i)$] $fg, fh \in L^1(0,R)$,
\vspace{2pt}
\item[$(ii)$] $\tau \mapsto f(\tau)(|g(\tau)| + h(\tau)) \displaystyle\int_\tau ^R \frac{d \rho}{f(\rho)} \in L^1(0,R)$, and 
\item[$(iii)$] $\displaystyle\frac{1}{f} \notin L^1(0,R)$,
\end{itemize}
\vspace{2pt}
\item[$(4)$] $a(r), |b(r)|, a(r)g(r)$ and $a(r)h(r) \to 0$ when $r \to 0$,
\vspace{3pt}
\item[$(5)$] $\displaystyle\lim_{r \to 0}c(r) \in [0,\infty]$,
\vspace{2pt}
\item[$(6)$] There exists $R \in [0,\infty)$ such that $G(r) \geq 0$ on $(0,R)$, and $G(r) \leq 0$ on $(R, \infty)$,
\vspace{2pt}
\item[$(7)$] $G^-(r) := \min\{G(r),0\} \not\equiv 0$.
\end{itemize}
Then \eqref{Problem1} has at most one positive solution.
\end{thm}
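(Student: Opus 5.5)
This is the Shioji--Watanabe theorem, quoted from \cite{NK}; the paper uses it as a black box. For completeness, here is how I would reconstruct its proof, following the Kwong--Yanagida shooting approach.

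\textbf{Step 1: shooting problem at the singular endpoint.} Use conditions $(1)$--$(3)$ to set up an initial value problem at $r=0$. Writing the equation as $(f v_r)_r = -f(gv+hv^3)$, I would solve the integral equation
\[
v(r)=\alpha-\int_0^r \frac{1}{f(\rho)}\int_0^\rho f(\tau)\bigl(g v+h v^3\bigr)(\tau)\,d\tau\,d\rho
\]
by a contraction argument near $0$. Condition $(2)$ together with $(3)(i)$--$(ii)$ makes the integral operator small on $(0,r_0)$. Condition $(3)(iii)$, the non-integrability of $1/f$, rules out the second, singular branch, so every positive solution of \eqref{Problem1} has the form $v(\cdot;\alpha)$ with $f v_r\to0$ at $0$. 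This gives a one-parameter family $v(r;\alpha)$, depending smoothly on $\alpha$. Differentiating in $\alpha$ gives $\varphi=\partial_\alpha v$, which solves the linearized equation
\[
\varphi_{rr}+\tfrac{f_r}{f}\varphi_r+(g+3hv^2)\varphi=0,\qquad \varphi(0)=1.
\]

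\textbf{Step 2: weighted Pohozaev identity.} For a solution $v$, set
\[
J(r)=f\Bigl[\tfrac12 a v_r^2+b\,v v_r+\tfrac12(c+ag)v^2+\tfrac14 a h v^4\Bigr].
\]
The definitions of $a,b,c$ are designed so that, after substituting the equation, $J_r=f\,G\,v^2$. The choice $a=f^{4/3}h^{-1/3}$ is exactly what cancels the quartic terms, and $b,c$ cancel the cross terms in $v v_r$ and $v_r^2$. Conditions $(4)$--$(5)$ give $J(0^+)\geq0$, and for a decaying solution $J(\infty)=0$. There is an analogous bilinear identity for the pair $(v,\varphi)$, and I would derive it the same way.

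\textbf{Step 3: nondegeneracy and conclusion.} This is the main obstacle. I would show that whenever $v(\cdot;\alpha)$ is a positive decaying solution, $\varphi$ has exactly one zero and $\varphi(r)\to-\infty$ (or at least does not tend to $0$).
\begin{itemize}
\item First, a Sturm comparison between $\varphi$ and $v$ gives at least one zero $r_1$ of $\varphi$.
\item Next, assume a second zero $r_2$. Integrate the bilinear Pohozaev identity against $v$ and $\varphi$ on $(r_1,r_2)$ or $(r_2,\infty)$. The single sign change of $G$ in condition $(6)$ should force a sign contradiction; condition $(7)$ excludes the degenerate case where all terms vanish.
\end{itemize}
In the spirit of Yanagida's argument, I would try first to compare $\varphi$ with the test function $a v_r+b v$, which solves an inhomogeneous version of the linearized equation with forcing proportional to $G v$.

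Once nondegeneracy holds, the sets of $\alpha$ giving, respectively, positive decaying solutions, solutions with a zero, and positive solutions bounded away from $0$ are separated in the standard way. Positive decaying solutions then sit at isolated values of $\alpha$ where $v(\cdot;\alpha)$ passes monotonically from crossing to non-crossing. A connectedness argument in $\alpha$ rules out two such values, which gives at most one positive solution.
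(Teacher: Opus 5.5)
The paper gives no proof of this statement. It is imported verbatim as Theorem~1 of \cite{NK}, and the paper's own work is only the change of variables $v=r^{\frac12-\frac\beta2}Q$ and the check of $(1)$--$(7)$ for $f=r^{1+\beta}$, $g=-1$, $h=r^{\beta-1}$. Your first sentence is therefore exactly the paper's treatment, and nothing more is required there.

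Your supplementary reconstruction, however, is not a proof. First, the weights are misplaced. Since $a=f^{4/3}h^{-1/3}$ already carries the weight, the identity holds for the unweighted $J=\tfrac12 av_r^2+bvv_r+\tfrac12(c+ag)v^2+\tfrac14 ahv^4$, namely $J_r=Gv^2$; your $fJ$ has derivative $f_rJ+fGv^2$. Likewise the companion function is $\psi=(av_r+bv)/f$, not $av_r+bv$. Indeed $f\psi_r=-\bigl[bv_r+(c+ag)v+ahv^3\bigr]$, whence
\[
\psi_{rr}+\tfrac{f_r}{f}\psi_r+(g+3hv^2)\psi=-\tfrac{2G}{f}\,v,\qquad \tfrac{d}{dr}\bigl[f(\varphi_r\psi-\varphi\psi_r)\bigr]=2Gv\varphi,
\]
where the bracket is $\partial_\alpha J$.

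More seriously, Step~3 is the entire content of the theorem, and you leave it at ``should force a sign contradiction'' without identifying a single boundary term or its sign. At a zero $z$ of $\varphi$ the Wronskian above equals $\varphi_r(z)\,(av_r+bv)(z)$, so you need the sign of $\psi$ at the zeros of $\varphi$. This is where $J$ and the single $+\to-$ sign change of $G$ must be fed in, for instance through
\[
f\,(v_r\psi-v\psi_r)=2J+\tfrac12 ahv^4 .
\]
Since $ah=f^{4/3}h^{2/3}\ge0$, this shows that $J\ge0$ forces $\psi/v$ to be strictly decreasing, so $\psi$ changes sign at most once. Nothing of this kind appears in your sketch, so the nondegeneracy claim ($\varphi$ has exactly one zero and does not decay) is asserted rather than proved.

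There is a second gap. Several of your steps use information at $r=\infty$ that $(1)$--$(7)$ do not supply, since these hypotheses only constrain the behavior at $r=0$ and the sign pattern of $G$. Three steps are affected:
\begin{enumerate}
\item Your claim $J(\infty)=0$ for a decaying solution is unsupported, because the coefficients $a,b,c$ may grow and no decay rate for $v$ is assumed.
\item Your Sturm step needs the limit at infinity of $f(v_r\varphi-v\varphi_r)$. Its derivative is $2fhv^3\varphi$, so if $\varphi>0$ it increases from $0$, and a contradiction requires knowing that it tends to $0$; otherwise the comparison does not produce a zero of $\varphi$.
\item Your closing connectedness argument rests on the trichotomy ``has a zero / tends to $0$ / bounded away from $0$'' together with openness of the last set of $\alpha$. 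For an arbitrary $g$ neither the trichotomy nor the openness is automatic.
\end{enumerate}
These problems concern only your reconstruction. The paper's use of the result as a cited black box is unaffected.
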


\begin{proof}[Proof of Theorem~\ref{T:UniqGS}] We continue to denote $\beta=\sqrt{1+4a}$. Under the change of variables $v(r) = r^{\frac12-\frac{\beta}{2}} Q(r)$, the equation \eqref{GS_positive_a} takes the form
\[
v_{rr} + \tfrac{f_r(r)}{f(r)} v_r + g(r)v + h(r) v^3=0
\]
where
\[
f(r)=r^{1+\beta},\quad g(r) = -1,\qtq{and} h(r) = r^{-1 +\beta}.
\]
To obtain uniqueness, we now check conditions (1)--(7). 

Condition (1) is immediate, by inspection.

For condition $(2)$, we first compute
\begin{align}\label{Cond1}
f(\tau)(|g(\tau)| + h(\tau)) = \tau^{1+\beta} + \tau^{2\beta},
\end{align}
so that
\[
\int_0^r f(\tau)(|g(\tau)| + h(\tau)) d\tau = \tfrac{r^{2+\beta}}{2+\beta} + \tfrac{r^{1+2\beta}}{1+2\beta}.
\]
Dividing by $f(r)$ and letting $r \to 0^+$ yields $(2)$.

To verify condition $(3)$, we let $R=1$ and first note that $(i)$ and $(iii)$ follow immediately from the fact that $a>0$.  To check $(ii)$, we first compute
\[
\int_\tau ^1\tfrac{d \rho}{f(\rho)} = \tfrac{1}{\beta}\left( \tau^{-\beta}-1\right).
\]
Combining this with \eqref{Cond1}, we have
\begin{align*}
f(\tau)(|g(\tau)| + h(\tau)) \int_\tau ^1 \tfrac{d \rho}{f(\rho)} &= \tfrac{1}{\beta}(\tau +\tau^{\beta} - \tau^{1+\beta}-\tau^{2\beta})\in L^1(0,1).
\end{align*}

For condition $(4)$, we compute 
\[
a(r) = r^{\frac{5}{3} +\beta} \qtq{and} b(r) = (\tfrac{1}{6}-\tfrac{1}{2}\beta)r^{\frac{2}{3}+\beta},
\]
from which we derive that $a,b,ag,ah\to 0$ as $r\to 0$. 

For condition $(5)$, we compute
\[
c(r) =  -b_r(r) +\tfrac{f_r(r)}{f(r)}b(r) = \tfrac{1-3\beta}{18} r^{-\frac{1}{3} +\beta} \to 0 \qtq{as}r\to0.
\]

Finally, for $(6)$ and $(7)$, we compute 
\[
G(r) = -\tfrac{r^{\beta - \frac{4}{3}}}{108} \left[ 36\left(3\beta+2\right) r^2 + \left(3\beta-1\right)^2 \right].
\]
As $a>0$, we obtain condition $(6)$ with $R=0$, along with condition $(7)$.

Having verified conditions $(1)$--$(7)$, we may now apply Theorem~\ref{ThmUniqueness} to conclude uniqueness.\end{proof}

To conclude this section, we record some useful properties of the ground state and consequences of the sharp Gagliardo--Nirenberg inequality.

\begin{prop} \label{boundsGS} Let $a>0$ and $\beta=\sqrt{1+4a}$.  Then
\[
Q_a \in L_x^r \qtq{for} r\in[1,\infty] \qtq{and} \nabla Q_a \in L_x^r \qtq{for} r\geq 1\qtq{s.t.} (3-\beta)r<6. 
\]
\end{prop}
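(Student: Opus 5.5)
The plan is to combine three earlier facts: the exponential decay of Proposition~\ref{Expdecay} at infinity, the asymptotics $Q_a(r)\sim r^{-\frac12+\frac12\beta}$ and $Q_a'(r)\sim r^{-\frac32+\frac12\beta}$ as $r\to0$ from the Corollary following Lemma~\ref{L:FirstAsymp}, and local regularity of $Q_a$ on $(0,\infty)$. The last of these follows from standard ODE theory applied to \eqref{GS_positive_a}, since the coefficients are smooth away from $r=0$. We then split every integral into the regions $r\le 1$ and $r\ge 1$.

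\textbf{The function $Q_a$.} Since $a>0$, we have $\beta>1$, so the exponent $-\frac12+\frac12\beta$ is positive. Hence $Q_a$ is bounded near the origin. It is continuous on $(0,\infty)$ and decays exponentially for $r\ge1$, so $Q_a\in L^\infty$. For $r\ge1$, exponential decay gives $\int_{r\ge1}Q_a^p r^2\,dr<\infty$ for every $p\in[1,\infty)$. For $r\le1$, boundedness on the finite-measure unit ball does the same. Together these give $Q_a\in L^r$ for all $r\in[1,\infty]$.

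\textbf{The gradient.} For radial $Q_a$ we have $|\nabla Q_a|=|Q_a'(r)|$. We first need a decay estimate for $Q_a'$ at infinity. With $v=rQ_a$ as in the proof of Proposition~\ref{Expdecay}, the ODE $v_{rr}=Wv$ and the exponential decay of $v$ give $v_{rr}\lesssim e^{-\lambda r}$. Since $v_r\to0$ (because $v_r$ is monotone for large $r$ and $v\to0$), integrating from $r$ to $\infty$ yields $|v_r|\lesssim e^{-\lambda r}$. Therefore $Q_a'=(v_r-Q_a)/r$ also decays exponentially, and $\int_{r\ge1}|Q_a'|^p r^2\,dr<\infty$ for all $p\ge1$.

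Near the origin, $|Q_a'|^p r^2\sim r^{p(-\frac32+\frac12\beta)+2}$. This is integrable on $(0,1)$ exactly when
\[
p\bigl(-\tfrac32+\tfrac12\beta\bigr)+2>-1,
\]
that is, when $(3-\beta)p<6$. The condition holds automatically for every $p$ when $\beta\ge3$. On the compact set $[\delta,1]$ the function $Q_a'$ is continuous, so it contributes nothing singular.

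\textbf{Main obstacle.} The only delicate point is making the two-sided asymptotics near $0$ uniform enough to integrate. The Corollary gives $Q_a'(r)\sim r^{-\frac32+\frac12\beta}$, meaning the ratio tends to a nonzero constant. Hence on some $(0,\delta)$ we have $|Q_a'|\le Cr^{-\frac32+\frac12\beta}$, which is all the sufficiency direction requires. The remaining care is checking that $Q_a'\to0$ at infinity, which is handled by the monotonicity argument for $v_r$ above.
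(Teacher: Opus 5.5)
Your argument is correct. The paper states this proposition without proof, and the justification it relies on is exactly your combination: the small-$r$ asymptotics $Q_a\sim r^{-\frac12+\frac12\beta}$ and $Q_a'\sim r^{-\frac32+\frac12\beta}$ from the Corollary, the exponential decay of Proposition~\ref{Expdecay}, and the integrability computation $p(-\frac32+\frac12\beta)+2>-1\iff(3-\beta)p<6$. Your extra step showing that $Q_a'$ also decays exponentially, via $v_r(r)=-\int_r^\infty Wv\,d\rho$ once $v_r\to0$, supplies a detail the paper leaves implicit.
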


In the following proposition, we recall the notation $\mathcal{E}_a$ and $\mathcal{K}_a$ from Theorem~\ref{T:KMVZ}.

\begin{prop}\label{P:Coercivity}
Let $a>0$ and suppose $u_0\in H_{\text{rad}}^1$ satisfies $M(u_0)E_a(u_0)\leq \mathcal{E}_a.$  Let $u:I\times\R^3\to \mathbb{C}$ be the maximal-lifespan solution to \eqref{NLSa}.
\begin{itemize}
\item[$(i)$] If $\|u_0\|_{L^2}\|u_0\|_{\dot{H}_a^1}<\mathcal{K}_a$, then $I=\R$ and $\|u(t)\|_{L^2}\|u(t)\|_{\dot{H}_a^1}<\mathcal{K}_a$ for all $t\in\R$. 
\item[$(ii)$] If $\|u_0\|_{L^2}\|u_0\|_{\dot{H}_a^1}=\mathcal{K}_a$, then $u(t)=e^{it}Q_a$ up to symmetries. 
\item[$(iii)$]  If $\|u_0\|_{L^2}\|u_0\|_{\dot{H}_a^1}>\mathcal{K}_a$, then $\|u(t)\|_{L^2}\|u(t)\|_{\dot{H}_a^1}>\mathcal{K}_a$ for all $t\in I$. 
\end{itemize}
\end{prop}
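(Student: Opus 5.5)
The plan is to use the standard energy-trapping argument built on the sharp radial Gagliardo--Nirenberg inequality of Theorem~\ref{T:GN}, with the characterization of the equality case supplied by the uniqueness result Theorem~\ref{T:UniqGS}. Since the flow preserves radial symmetry, the solution stays in $H^1_{\text{rad}}$, so the radial constant $C_a$ controls the potential energy at every time.

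\textbf{Step 1: trapping inequality.} For radial $u$ we have
\[
M(u)E_a(u)\geq \tfrac12 \bigl(\|u\|_{L^2}\|u\|_{\dot H_a^1}\bigr)^2-\tfrac{C_a}{4}\bigl(\|u\|_{L^2}\|u\|_{\dot H^1_a}\bigr)^3=:F\bigl(\|u\|_{L^2}\|u\|_{\dot H^1_a}\bigr).
\]
The function $F(y)=\tfrac12y^2-\tfrac{C_a}{4}y^3$ increases on $[0,y_*]$, has its unique maximum at $y_*=\tfrac{4}{3C_a}$, and decreases afterwards. Because $Q_a$ attains equality in Theorem~\ref{T:GN}, the Pohozaev identities \eqref{pohozaev} give $y_*=\mathcal{K}_a$ and $F(y_*)=\mathcal{E}_a$. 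Indeed, $\|Q_a\|_{L^4}^4=4M(Q_a)$ and $\|Q_a\|_{\dot H^1_a}^2=3M(Q_a)$, so $C_a=4/(\sqrt{27}M(Q_a))$, while $\mathcal{K}_a=\sqrt3\,M(Q_a)$ and $\mathcal{E}_a=\tfrac12M(Q_a)^2$. A direct check confirms $F(\mathcal{K}_a)=\mathcal{E}_a$.

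\textbf{Step 2: cases (i) and (iii).} By conservation, $F(y(t))\leq \mathcal{E}_a=\max F$ for all $t\in I$, where $y(t)=\|u(t)\|_{L^2}\|u(t)\|_{\dot H^1_a}$ is continuous in $t$. The aim is to show $y(t)\neq\mathcal{K}_a$ for every $t$. Suppose instead that $y(t_0)=\mathcal{K}_a$ for some $t_0$. Then the inequality in Step 1 is forced to be an equality at $t_0$, so $u(t_0)$ is an optimizer of the radial Gagliardo--Nirenberg inequality. By continuity of $y$ and the intermediate value theorem, if $y(0)<\mathcal{K}_a$ then $y(t)<\mathcal{K}_a$ throughout $I$ unless such a $t_0$ exists, and likewise in case (iii). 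Ruling out $t_0$ reduces to case (ii): if $u(t_0)$ is a symmetry of $Q_a$, then by uniqueness of the solution to \eqref{NLSa} the whole solution $u$ is the ground state up to symmetries, so $y\equiv\mathcal{K}_a$, contradicting $y(0)\neq\mathcal{K}_a$. Global existence in case (i) then follows from the uniform bound $\|u(t)\|_{\dot H_a^1}\lesssim \mathcal{K}_a/\|u_0\|_{L^2}$, which together with mass conservation gives a uniform $H^1$ bound and hence, by the blowup alternative of the local theory, $I=\R$.

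\textbf{Step 3: case (ii), the main obstacle.} Suppose $y(0)=\mathcal{K}_a$ and $M(u_0)E_a(u_0)\leq \mathcal{E}_a$. Since $F(\mathcal{K}_a)=\mathcal{E}_a$, both sides must be equal, so $u_0$ is a radial optimizer of the Gagliardo--Nirenberg inequality. The remaining step is to show that every radial optimizer equals $e^{i\theta}\lambda\mu Q_a(\lambda x)$ for some $\theta$, $\lambda$, $\mu$, and then to fix $\mu$ using the equality of mass-energy. I expect this identification to be the main difficulty, and I would argue as follows.

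First, pass to $|u_0|$. Since $\|\nabla|f|\|_{L^2}\leq\|\nabla f\|_{L^2}$ and the potential term is unchanged, $|u_0|$ is also an optimizer, and equality in the diamagnetic inequality forces $u_0=e^{i\theta}|u_0|$ with a constant phase, using that $|u_0|>0$ by the strong maximum principle. Next, the Euler--Lagrange equation for the optimizer $|u_0|$, after rescaling amplitude and space, becomes \eqref{GroundStateEq}, so Theorem~\ref{T:UniqGS} identifies it with $Q_a$. Finally, the solution with this data is explicitly $e^{it}Q_a$ up to symmetries, by uniqueness for \eqref{NLSa}.
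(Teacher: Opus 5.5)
Your proposal is correct and is the standard energy-trapping argument the paper has in mind: the paper records this proposition without proof, as a consequence of the sharp radial Gagliardo--Nirenberg inequality (Theorem~\ref{T:GN}), with optimizers identified through the uniqueness result Theorem~\ref{T:UniqGS} (the same identification is used in the proof of Lemma~\ref{aux_mod}). One small correction: for $a>0$ the ground state vanishes at the origin ($Q_a(r)\sim r^{(\beta-1)/2}$), so the strong maximum principle gives $|u_0|>0$ only on $\R^3\setminus\{0\}$; since that set is connected, your constant-phase conclusion is unaffected.
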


We will refer to solutions as in $(i)$ \emph{constrained} and to solutions as in $(iii)$ \emph{unconstrained}.

\section{Spectral Analysis}\label{S:Spectral}

This section contains an analysis of the operator arising from linearization of \eqref{NLSa} around the ground state solution. Given $a >0$ and a radial solution $u$ to \eqref{NLSa}, we define $v=v_1+iv_2$ by setting $u = e^{it} (Q_a + v)$ and obtain the following equation:
\begin{align}\label{lin1}
i \partial_t v -\H_av  - v  + K(v) + R(v) = 0
\end{align}
where 
\begin{equation}\label{K(v)R(v)}
K(v) := Q_a^2(3v_1 + i v_2)\qtq{and} R(v) := Q_a^3 G(Q_a^{-1} v),
\end{equation}
with 
\[
G(z) := 3z_1^2 + z_2^2 + z_1^3 + z_1 z_2^2 + i(2z_1 z_2 + z_1^2 z_2 + z_2^3).
\]

In order to express \eqref{lin1} as a system for $[v_1\ v_2]^t$, we first introduce the bounded symmetric operator $L: H^1_{\text{rad}} \times H^1_{\text{rad}} \to H^{-1}_{\text{rad}} \times H^{-1}_{\text{rad}}$ given by 
\begin{equation}\label{D:L}
L :=  \begin{pmatrix*}[c]
L_+ & 0 \\
0 & \,\, L_-
\end{pmatrix*},
\end{equation}
where 
\[
L_+ := 1 + \H_a - 3Q_a^2\qtq{and} L_- := 1 + \H_a - Q_a^2.
\]
We further introduce the densely-defined operator $J : D(J) \subset H^{-1}_{\text{rad}} \times H^{-1}_{\text{rad}} \to H^1_{\text{rad}} \times H^1_{\text{rad}}$ given by
\[
J = \begin{pmatrix*}[r]
0 & 1\, \\
-1 & \,\,\,0\,
\end{pmatrix*}.
\]
Using the natural identification of $a + ib \in \mathbb{C}$ with $[a, b]^t \in \mathbb{R}^2$, the linearized equation \eqref{lin1} can therefore be written 
\begin{align}\label{linearized}
\partial_t v + \mathcal{L} v = i R(v),\qtq{where} \mathcal{L}=JL.
\end{align}

Our interest is in the spectral properties of $\mathcal{L}$. We note immediately that $L_- Q_a = 0$ and $L_+ Q_a = -2 Q_a^3$, so that $(L_+Q_a,Q_a) < 0$ (where here and below $(\cdot,\cdot)$ denotes the $L^2$ inner product). 

We first show that there exist only two nonpositive directions for $L$. 
\begin{prop} \label{coercivity}
Let $v \in H^1_{\text{rad}}(\mathbb{R}^3; \mathbb{R})$. Then
\begin{enumerate}
\item[(i)] $(L_- v, v) \gtrsim \|v\|_{H^1}^2$ provided $(v, Q_a) = 0$, 
\item[(ii)] $(L_+ v, v) \gtrsim \|v\|_{H^1}^2$ provided $(v, \H_a Q_a) = 0$.
\end{enumerate}
\end{prop}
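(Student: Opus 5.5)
We follow the standard route of Weinstein and Duyckaerts--Merle: first prove nonnegativity of the quadratic forms on the given codimension-one subspaces, then identify kernels, and finally upgrade nonnegativity plus trivial kernel to $H^1$-coercivity by a compactness argument. The radial restriction is essential throughout, since in $H^1_{\text{rad}}$ the operators $L_\pm$ are compact perturbations (via the decaying potential $Q_a^2$, Proposition~\ref{Expdecay}) of $1+\H_a$. Their essential spectrum is therefore $[1,\infty)$, and below $1$ they have only finitely many eigenvalues of finite multiplicity.

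For (i), we write $L_- v = -Q_a^{-1}\nabla_a^*(Q_a^2\nabla (v/Q_a))$ formally. Equivalently, for $v=Q_a w$ we have $(L_-v,v)=\int Q_a^2|\nabla w|^2\,dx\ge 0$, using $Q_a>0$ and $L_-Q_a=0$. Positivity of $Q_a$ on $(0,\infty)$ follows from the ODE \eqref{GS_positive_a} and the strong maximum principle, and the boundary behavior at $0$ and $\infty$ is controlled by the Corollary and Proposition~\ref{Expdecay}, so the integration by parts is justified; the $a|x|^{-2}$ term cancels exactly. Hence $L_-\ge0$ with kernel spanned by $Q_a$. On $\{(v,Q_a)=0\}$, the spectral decomposition then gives $(L_-v,v)\ge \lambda_2\|v\|_{L^2}^2$ with $\lambda_2>0$ (either a second eigenvalue or $\inf\sigma_{ess}=1$). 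To upgrade to $H^1$, we use $(L_-v,v)=\|v\|_{H_a^1}^2-\int Q_a^2v^2$ together with $\int Q_a^2 v^2\le\|Q_a\|_{L^\infty}^2\|v\|_{L^2}^2$. Interpolating, $(L_-v,v)\ge \delta\|v\|_{\dot H^1_a}^2+(\ldots)\|v\|_{L^2}^2$, and we conclude using $\|v\|_{\dot H^1_a}\ge\|\nabla v\|_{L^2}$ since $a>0$.

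For (ii), the variational characterization is the key input. $Q_a$ maximizes the Weinstein functional $J(f)=\|f\|_{L^4}^4/(\|f\|_{L^2}\|f\|_{\dot H^1_a}^3)$ over $H^1_{\text{rad}}$ (Theorem~\ref{T:GN}), so the second variation at $Q_a$ is nonpositive along radial real directions. Using the Pohozaev identities \eqref{pohozaev}, we compute this second variation and find that it equals a negative multiple of $(L_+v,v)$ plus terms involving $(v,Q_a)^2$ and $(v,\H_aQ_a)^2$. On the subspace $(v,\H_aQ_a)=0$, we further exploit the scaling invariance of $J$ under $f\mapsto\lambda f(\mu\cdot)$, which lets us also kill the $(v,Q_a)$ term. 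The outcome is $(L_+v,v)\ge0$ whenever $(v,\H_aQ_a)=0$. For the kernel, we show $\ker L_+\cap H^1_{\text{rad}}=\{0\}$. This is the standard nondegeneracy consequence of the ODE uniqueness in Theorem~\ref{T:UniqGS}: in the nonradial case the kernel would contain $\partial_jQ$, but radially a nonzero kernel element would, by ODE arguments (Wronskian comparison with $\Lambda Q_a=(1+x\cdot\nabla)Q_a$, where $L_+\Lambda Q_a=-2Q_a$), contradict uniqueness and the shooting structure. Alternatively, it follows from the uniqueness framework of \cite{NK}, whose proof yields nondegeneracy. We expect this step to be the main obstacle, and we would try the explicit Sturm-type argument in the $v$ variables of the proof of Theorem~\ref{T:UniqGS} first.

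Finally, we pass from nonnegativity to coercivity by contradiction. Suppose $v_n\in H^1_{\text{rad}}$ with $\|v_n\|_{H^1}=1$, $(v_n,\H_aQ_a)=0$ and $(L_+v_n,v_n)\to0$. Passing to a subsequence, $v_n\rightharpoonup v$ weakly. By radial compactness and the decay of $Q_a$, we have $\int Q_a^2v_n^2\to\int Q_a^2v^2$. If $v=0$, then $(L_+v_n,v_n)\ge \|v_n\|_{H^1}^2-o(1)$, which is a contradiction. Otherwise $v$ is a minimizer of $(L_+v,v)/\|v\|^2$ on the constraint set with value $0$. The Lagrange condition then gives $L_+v=\mu\H_aQ_a$. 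We pair this with $\Lambda Q_a$ and use $(\H_aQ_a,Q_a)\neq0$ together with $L_+\Lambda Q_a=-2Q_a$ to show $\mu=0$, so $v\in\ker L_+=\{0\}$, a contradiction. Upgrading the $L^2$-type bound to $H^1$ is done as in (i).
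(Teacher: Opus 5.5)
\textbf{Verdict: genuine gap in part (ii), at the nondegeneracy step $\ker L_+\cap H^1_{\mathrm{rad}}=\{0\}$.}

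Part (i) is complete, and your route differs from the paper's. You use the ground-state representation $(L_-v,v)=\int Q_a^2|\nabla(v/Q_a)|^2\,dx$, which gives $L_-\ge0$ and $\ker L_-=\operatorname{span}\{Q_a\}$ in one stroke. The paper instead gets nonnegativity from the second variation of the Weinstein functional, and the kernel from an Abel/Wronskian computation showing that a second solution behaves like $r^{-(1+\beta)/2}$ at the origin. In (ii), the nonnegativity and the weak-limit/Lagrange-multiplier upgrade are also fine.

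The gap is nondegeneracy. Your final contradiction is exactly ``$v\in\ker L_+=\{0\}$'', and you give no argument for it. Uniqueness of the positive solution (Theorem~\ref{T:UniqGS}) does not imply that the linearization has trivial kernel; a unique solution can in principle be degenerate. In Kwong-type arguments it is nondegeneracy that is proved directly and uniqueness that is deduced, not conversely. Theorem~\ref{ThmUniqueness} as stated gives only uniqueness, so ``its proof yields nondegeneracy'' is an assertion, not an argument. The Wronskian comparison with $\Lambda Q_a$ does not help either. For a radial kernel element $\phi$ one has $\bigl(r^2(\phi\,(\Lambda Q_a)'-\phi'\,\Lambda Q_a)\bigr)'=2r^2\phi\,Q_a$, and integrating over $(0,\infty)$ only returns $(\phi,Q_a)=0$, which you already have from self-adjointness.

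You also cannot fill this hole by importing the classical argument the paper uses here. That argument runs: the kernel element is the second radial eigenfunction, so it has a single zero $r_0$; it is orthogonal to $Q_a$ and $Q_a^3$ because $L_+Q_a=-2Q_a^3$ and $L_+\Lambda Q_a=-2Q_a$; hence $\int_0^\infty\phi\,Q_a\bigl(Q_a^2-Q_a^2(r_0)\bigr)r^2\,dr=0$, contradicting a constant sign. The constant-sign claim needs $Q_a$ to be decreasing, which is false for $a>0$. By the asymptotics in Section~\ref{S:uniqueness}, $Q_a(r)\sim r^{(\beta-1)/2}\to0$ as $r\to0$. So for $\alpha\neq0$ the factor $\alpha+\gamma Q_a^2$ has the sign of $\alpha$ both near $0$ and near $\infty$, and no combination of $Q_a$ and $Q_a^3$ changes sign exactly once. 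Closing (ii) therefore needs a genuinely new input. Options include a nondegeneracy theorem for the ODE class of Theorem~\ref{ThmUniqueness} (Shioji and Watanabe have results of this type), with singular solutions excluded by the $H^1$ condition as in the paper's treatment of $\ker L_-$; or a direct shooting-derivative analysis.

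Two smaller points.
\begin{itemize}
\item The scaling trick is unnecessary. The second variation, together with \eqref{pohozaev} and $Q_a^3=Q_a+\H_aQ_a$, directly gives $(L_+v,v)\ge\|Q_a\|_{L^2}^{-2}(v,Q_a)^2$ whenever $(v,\H_aQ_a)=0$.
\item Pairing $L_+v=\mu\H_aQ_a$ with $\Lambda Q_a$ alone yields only $-2(v,Q_a)=\tfrac32\|Q_a\|_{L^2}^2\,\mu$. To conclude $\mu=0$, also pair with $Q_a$, which gives $-2(v,Q_a)=3\|Q_a\|_{L^2}^2\,\mu$; or use the inequality above to get $(v,Q_a)=0$ first.
\end{itemize}
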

\begin{proof} The proof is an adaptation of the proof of \cite[Proposition~3.1]{CRITICAL}.  We will equivalently prove $L^2$-coercivity under the appropriate orthogonality conditions for the operators $\grada^{-1} L_{\pm} \grada^{-1} $. Writing $\grada^{-1} L_{\pm} \grada^{-1} = I - \mathcal{K}_\pm $, we first prove that $\mathcal{K}_\pm$ are compact on $L^2$. Since $\grada^{-1} $ is continuous, it suffices to prove that $Q_a^2 \grada^{-1}$ is compact.  To this end, we first note that this operator is bounded.  Indeed, by Sobolev embedding,
\begin{align*}
\| Q^2_a \grada^{-1}v\|_{L^2} &\lesssim \|Q_a^2\|_{L^3} \|\grada^{-1}v\|_{L^6} \lesssim \|v\|_{L^2}.
\end{align*}
We next observe that
\begin{align*}
\| |\nabla|^{\frac12} &(Q_a^2 \grada^{-1}v) \|_{L^2} \\
&\lesssim \| \nabla(Q_a^2 \grada^{-1}v) \|_{L^{\frac{3}{2}}} \\
&\lesssim \|Q_a\|_{L^{12}}^2 \|\nabla \grada^{-1} v\|_{L^2} + \|Q_a \nabla Q_a\|_{L^2}\| \grada^{-1} v \|_{L^6} \lesssim \|v\|_{L^2}.
\end{align*}
Finally, using the decay of the ground state away from the origin, we have 
\begin{align*}
\| Q_a^2 \grada^{-1} v \|_{L^2(|x|>R)} \lesssim R^{-1} \|v\|_{L^2}. 
\end{align*}
The desired compactness now follows from Rellich--Kondrachov. 

It follows that the eigenvalues of $I-\mathcal{K}_\pm$ are discrete, and since the identity acts as a spectral shift, they can only accumulate at 1. Moreover, $I-\mathcal{K}_\pm$ has a finite number of eigenvalues $\{\lambda_i^\pm\}_{i=1}^N$ in $(-\infty, \tfrac12]$, which we assume are nondecreasing. Finally, by Weyl's Theorem, the essential spectrum is $[1, \infty)$. 

We first prove that there are orthogonal conditions that guarantee that $L_\pm$ are nonnegative.  By construction, $Q_a$ minimizes the Weinstein functional 
\[
W(f) = \frac{\left(\displaystyle \int | \nabla f|^2\,dx + \int \frac{a |f|^2}{|x|^2 } \,dx \right)^{3/2} \left( \displaystyle \int |f|^2 \,dx\right)^{1/2}}{ \displaystyle\int |f|^4\,dx}.
\]
Thus $\frac{d^2}{d \ep^2} W(Q_a + \ep v)|_{\ep=0} \geq 0$. Writing $v = v_1 + iv_2$ and assuming $(v_1,\H_a Q_a)=0$, we apply \eqref{pohozaev} to obtain
\begin{align*}
(L_+v_1, v_1) + (L_- v_2, v_2) \geq \frac{1}{\| Q_a\|_2^2}  \left( \int Q_a v_1\right)^2.
\end{align*}
It follows that $L_-$ is always nonnegative and that $L_+$ is nonnegative if $(v_1, \H_a Q_a) = 0$.  Now, by \eqref{GroundStateEq}, we have that $\lambda_1^- = 0$, and we have now shown that although $\lambda_1^+ <0$, we have $\la_2^+ \geq 0$. To finish the proof, it will therefore suffice to show that $\la_2^\pm >0$. To this end, we will prove that 
\[
\text{ker} L_+=\emptyset\qtq{and}\text{ker} L_-=\text{span}\{Q_a\}.
\]

Let us first prove that $\text{ker} L_- = \text{span}\{Q_a\}$.\footnote{We remark that in contrast to the case $a<0$ addressed in \cite{yang2026uniqueness}, we are working in the radial setting and hence do not need to consider the spherical harmonic expansion.}  Suppose $f_0$ were another independent radial solution to $L_- f = 0$. Thus, 
\begin{align*}
(f_0)_{rr} + \tfrac{2}{r}(f_0)_r - \frac{a}{r^2} f_0 + Q_a^2 f_0 -f_0 = 0.
\end{align*}
Taking the Wronskian and using Abel's theorem, we find
\begin{align} \label{abel}
(f_0)_r Q_a - (Q_a)_r f_0 = \tfrac{C}{r^2},\qtq{so that}\bigl(\tfrac{f_0}{Q_a}\bigr)_r = \tfrac{C}{Q_a^2 r^2}.
\end{align}
From the asymptotics for ground state, we know that $Q_a \sim r^{-\frac{1}{2} + \frac{1}{2}\beta}$ as $r\to 0$, where we denote $\beta=\sqrt{1+4a}$ as above. Thus, integrating the above equality and using \eqref{abel}, we obtain 
\[
f_0 \sim r^{-(\frac{1}{2} + \frac{1}{2}\beta)}\qtq{and} (f_0)_r \sim r^{-(\frac{3}{2} + \frac{1}{2}\beta)} \qtq{as}r\to 0.
\]
In particular, $f_0\notin H^1$.  It follows that $\text{ker}(L_{-}) = \text{span}(Q_a)$. 

We next prove that $\text{ker} L_+=\emptyset$. We suppose that $L_+$ has a nonzero radial element $v_0$ in its kernel. As $L_+$ has a single negative eigenvalue, Sturm--Liouville theory guarantees that $v_0$ has a single zero at some $r=r_0$. Now observe that $Q_a$ and $Q_a^3$ are in the range of $L_+$; indeed, we have
\[
L_+Q_a = -2Q_a^3 \qtq{and} L_+(Q_a+x\cdot\nabla Q_a)=-2Q_a.
\]
Thus, since $L_+$ is self-adjoint, we have $v_0\perp Q_a,Q_a^3$.  In particular,
\[
 \int v_0(r)[Q_a^3(r) - Q_a^2(r_0)Q_a(r)]\,dr=0.
\]
However, because $Q_a$ is strictly decreasing and $v_0$ changes sign only at $r_0$, the integrand above has a constant sign. Thus we obtain a contradiction and conclude that $L_+$ has no radial kernel.

Finally, let us upgrade to the coercivity statement for $L_\pm$. Writing $\tilde L_\pm = \grada^{-1} L_\pm \grada^{-1} = I+\mathcal{K}_\pm$, we suppose towards a contradiction that there exists an $L^2$-normalized sequence $v_n$ such that $\langle \tilde L_\pm v_n,v_n\rangle \to 0$, with $v_n$ satisfying the appropriate orthogonality condition depending on the $\pm$ sign. We may assume that there exists $v$ such that $v_n\rightharpoonup v$ weakly in $L^2$, and note that $v$ then satisfies the same orthogonality condition as the $v_n$.  Recalling that $\mathcal{K}_\pm$ are compact on $L^2$, we now obtain that $\mathcal{K}_\pm v_n \to \mathcal{K}_\pm v$ strongly in $L^2$. Using weak lower-semicontinuity of the norm, we therefore obtain
\[
\langle \tilde L_\pm v,v\rangle \leq \liminf_{n\to\infty} \langle \tilde  L_\pm v_n,v_n\rangle =0.
\]
By the nonnegativity established above, this forces $L_\pm v =0$ as well as $\|v\|_{L^2}^2=1$.  In particular, we have the strong convergence $v_n\to v$ in $L^2$ and $v\neq 0$.  But then in the case of $L_-$, we obtain a contradiction due to the fact that $v\perp Q_a$, while in the case of $L_+$ we obtain a contradiction due to the fact that $\text{ker}L_+=\emptyset$. \end{proof}

Recalling the operators $L$ and $\mathcal{L}$ defined in \eqref{D:L} and \eqref{linearized}, we now turn to a description of the spectrum of the matrix operator $\mathcal{L}$. 

\begin{prop} \label{spectral}
The operator $\mathcal{L}$ defined in \eqref{linearized} generates a $C^0$ group $e^{t\mathcal{L}}$ of bounded linear operators in $H_{\text{rad}}^1(\R^3)\times H_{\text{rad}}^1(\R^3)$ and there is a decomposition 
\begin{align*}
H^1_{\text{rad}}(\R^3) = E^u \oplus E^s \oplus E^c
\end{align*}
into the unstable, center and stable spaces of $\mathcal{L}$ satisfying:
\begin{enumerate}
\item $E^u, E^c, E^s \subset dom(\mathcal{L})$ are invariant under $e^{t \mathcal{L}}$.
\item $dim \, E^u = dim \, E^s = 1$.
\item There exists $e_0>0$ such that 
\begin{align}
&e^{t \mathcal{L}}v = e^{e_0t} v \quad \forall v\in E^u, \\
&e^{t \mathcal{L}}w = e^{-e_0t} w \quad \forall w\in E^c.
\end{align}
\end{enumerate}
\end{prop}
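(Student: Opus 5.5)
We follow the by-now standard argument of \cite{DM, DR} (see also \cite{CRITICAL, yang2026uniqueness}), with Proposition~\ref{coercivity} as the key input. The proof has three steps: generation of the group, existence of the eigenvalue pair $\pm e_0$, and the splitting.

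\textbf{Step 1: the group.} Write $\mathcal{L} = J(1+\H_a) - JV$, where $V=\mathrm{diag}(3Q_a^2,Q_a^2)$. The operator $J(1+\H_a)$ is skew-adjoint with respect to the inner product of $H^1_{\text{rad}}\times H^1_{\text{rad}}$ defined through $\grada$. Hence it generates a unitary group on this space. The perturbation $JV$ is bounded on $H^1_{\text{rad}}$, because $Q_a^2$ and $\nabla(Q_a^2)$ act boundedly by Proposition~\ref{boundsGS}, H\"older, Sobolev embedding and Lemma~\ref{L:EquivNorms}. Bounded perturbation theory then shows that $e^{t\mathcal{L}}$ is a $C^0$ group.

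\textbf{Step 2: the eigenvalues $\pm e_0$.} We look for $\mathcal{Y}_+=(Y_1,Y_2)$ with $\mathcal{L}\mathcal{Y}_+=e_0\mathcal{Y}_+$, i.e.
\[
L_-Y_2=e_0Y_1\qtq{and} -L_+Y_1=e_0Y_2,
\]
or equivalently $L_-L_+Y_1=-e_0^2Y_1$ with $Y_1\perp Q_a$. Restrict to $Y_1\in(\ker L_-)^\perp=Q_a^\perp$. By Proposition~\ref{coercivity}(i), $L_-$ is positive and invertible there, so $P:=L_-^{1/2}$ is a well-defined positive operator on $Q_a^\perp$. The problem reduces to showing that the self-adjoint operator $T:=PL_+P$ on $Q_a^\perp$ has a negative eigenvalue $-e_0^2$.

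\emph{Negativity.} The essential spectrum of $T$ lies in $[1,\infty)$ by the Weyl/compactness argument already used in Proposition~\ref{coercivity}, so it suffices to find $g\perp Q_a$ with $(L_+g,g)<0$ and set $f=P^{-1}g$. We take $g=Q_a+x\cdot\nabla Q_a$, the scaling derivative. It satisfies $(g,Q_a)=0$ by mass scaling and $L_+g=-2Q_a$. Hence
\[
(L_+g,g)=-2(Q_a,g)=0.
\]
Since this is only borderline, we instead perturb: take $g=Q_a+x\cdot\nabla Q_a+\epsilon h$ with $(h,Q_a)=0$, or use $g=Q_a^{\perp}$-projection of $Q_a$ itself. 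Indeed $Q_a-\frac{(Q_a,Q_a)}{\ldots}$ is not available directly, so we choose $g=Q_a-\mu(Q_a+x\cdot\nabla Q_a)$, projected appropriately. A cleaner route is the min-max characterization: $L_+$ has exactly one negative direction, and $\ker L_+=\{0\}$ on radial functions. If $L_+\geq 0$ on $Q_a^\perp$, then $\inf\{(L_+f,f):f\perp Q_a,\|f\|=1\}$ would be attained (by compactness) at some $f_*$ with $L_+f_*=\lambda Q_a$. With $\lambda\ne0$, this gives $f_*=-\tfrac{\lambda}{2}(Q_a+x\cdot\nabla Q_a)$, and then
\[
(L_+f_*,f_*)=\lambda(Q_a,f_*)=0,
\]
which contradicts neither outcome directly. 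This criterion is the standard Vakhitov--Kolokolov / $L^2$-critical-exponent check, and it is the step I expect to be the main obstacle: for the mass-supercritical cubic problem, the correct test direction is $x\cdot\nabla Q_a$ combined with Pohozaev, and $(L_+\cdot,\cdot)<0$ on $Q_a^\perp$ follows from $\frac{d}{d\lambda}\|Q_{a,\lambda}\|_{L^2}^2\neq0$ for the $L^2$-preserving scaling. I would verify this via the function $\lambda^{3/2}Q_a(\lambda x)$, whose energy has a strict maximum at $\lambda=1$. Differentiating twice gives $(L_+g,g)<0$ for
\[
g=\tfrac32Q_a+x\cdot\nabla Q_a\perp Q_a.
\]

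\emph{Conclusion of Step 2.} Take the minimal eigenvalue $-e_0^2<0$ of $T$ and eigenfunction $\psi$. Set $Y_1=P\psi$ and $Y_2=-e_0^{-1}L_+Y_1$. By conjugation symmetry, $\mathcal{Y}_-=(Y_1,-Y_2)$ is an eigenvector with eigenvalue $-e_0$. Elliptic regularity, using the decay of $Q_a$, places both vectors in $\mathrm{dom}(\mathcal{L})$.

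\textbf{Step 3: the splitting.} Set $E^u=\mathrm{span}\,\mathcal{Y}_+$ and $E^s=\mathrm{span}\,\mathcal{Y}_-$, and define $E^c$ as the $B$-orthogonal complement, where $B(v,w)=\tfrac12(Lv,w)$:
\[
E^c=\{v: B(v,\mathcal{Y}_\pm)=0\}.
\]
Since $B(\mathcal{Y}_+,\mathcal{Y}_-)\neq0$ (otherwise $B$ would be nonnegative on a three-dimensional space, contradicting Proposition~\ref{coercivity}), we get the direct sum $H^1_{\text{rad}}=E^u\oplus E^s\oplus E^c$. The identity $B(e^{t\mathcal{L}}v,e^{t\mathcal{L}}w)=B(v,w)$, together with the eigen-relations, gives invariance. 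The exponential formulas on $E^u$ and $E^s$ are immediate from the eigenvector relations.

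Uniqueness of the unstable and stable directions, i.e. that each space is one-dimensional, follows because any eigenvector with a real nonzero eigenvalue yields a negative direction of $T$. The coercivity in Proposition~\ref{coercivity}(ii) allows at most one such direction.
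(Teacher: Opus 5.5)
Your Steps 1 and 2 are sound once the dead ends are removed. The claim $(Q_a+x\cdot\nabla Q_a,Q_a)=0$ is false: it equals $-\tfrac12\|Q_a\|_{L^2}^2$, and in fact $(L_+(Q_a+x\cdot\nabla Q_a),Q_a+x\cdot\nabla Q_a)=\|Q_a\|_{L^2}^2>0$, which is exactly the generalized-kernel computation the paper uses. Your final choice $g=\tfrac32Q_a+x\cdot\nabla Q_a$ does satisfy $g\perp Q_a$ and $(L_+g,g)=-\tfrac34\|Q_a\|_{L^4}^4<0$. So producing $e_0$ from the bottom of $T=PL_+P$ is a legitimate alternative to the paper, which gets existence and uniqueness at once from the index formula $\dim E^u+k_0^{\le0}=n^-(L)=1$ of \cite[Theorem~2.3]{SPECTRAL}.

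The gap is in Step~3. Defining $E^u=\mathrm{span}\,\mathcal{Y}_+$ and $E^c$ as a $B$-orthogonal complement makes (2)--(3) tautological. The actual content is that these are the unstable, center and stable spaces: $\mathcal{L}$ has no other spectrum off $i\R$, and $e_0$ is algebraically simple. Your last paragraph only treats eigenvectors with a \emph{real} eigenvalue, and only their geometric multiplicity. Since $\mathcal{L}$ is not self-adjoint, nothing you wrote excludes a quartet $\pm\lambda,\pm\bar\lambda$ with $\Re\lambda\,\Im\lambda\neq0$. Its eigenvectors would be $B$-orthogonal to $\mathcal{Y}_\pm$, so they would lie in your $E^c$ and grow exponentially there. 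Nor have you excluded a Jordan chain at $\pm e_0$. A smaller slip: the reason $B(\mathcal{Y}_+,\mathcal{Y}_-)\neq0$ is not that $B$ would be nonnegative on a three-dimensional space. Rather, $B$ would vanish on the span of $\mathcal{Y}_\pm$ and $(0,Q_a)$, which meets the codimension-two subspace where $L$ is coercive. More directly, $B(\mathcal{Y}_+,\mathcal{Y}_-)=(L_+Y_1,Y_1)=-e_0^2\|\psi\|_{L^2}^2$.

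Both omissions close with tools you already have.
\begin{itemize}
\item \emph{Non-real eigenvalues.} If $\mathcal{L}Y=\lambda Y$ in the complexification with $\lambda\neq0$, then $Y_1=\lambda^{-1}L_-Y_2\perp Q_a$, $Y_1\neq0$, and $\phi=P^{-1}Y_1$ satisfies $T\phi=-\lambda^2\phi$. Since $T$ is self-adjoint, $\lambda^2\in\R$, so every eigenvalue off $i\R$ is real. By your count $n^-(T)\le n^-(L_+)=1$, it is then $\pm e_0$ with a one-dimensional eigenspace.
\item \emph{Jordan chains.} A vector with $(\mathcal{L}-e_0)w=\mathcal{Y}_+$ is ruled out by conservation of $B$: the quantity $B(e^{t\mathcal{L}}w,e^{t\mathcal{L}}\mathcal{Y}_-)=B(w,\mathcal{Y}_-)+tB(\mathcal{Y}_+,\mathcal{Y}_-)$ would have to be constant, forcing $B(\mathcal{Y}_+,\mathcal{Y}_-)=0$.
\item \emph{Discreteness.} $\sigma(\mathcal{L})$ off $i\R$ is discrete, because $JV$ is relatively compact with respect to the skew-adjoint $J(1+\H_a)$.
\end{itemize}
Together these identify your $E^u,E^s$ with the spectral unstable and stable spaces and show that $\mathcal{L}|_{E^c}$ has no spectrum off $i\R$.

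An equivalent route: the full spectral unstable space is $B$-isotropic (conservation of $B$ plus decay as $t\to-\infty$) and meets $\ker L$ trivially. Hence its dimension is at most $n^-(L)=1$ by Proposition~\ref{coercivity}. This is the information the paper obtains directly from the index theorem of \cite{SPECTRAL}.
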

\begin{proof} Again, we follow the lead of \cite{CRITICAL}. The decomposition of $H^1_{\text{rad}}(\R^3)$ and item $(1)$ follow directly from \cite[Theorem~2.2]{SPECTRAL}. 

Recalling from the proof of the previous proposition that $L$ has exactly one negative direction, \cite[Theorem~2.3]{SPECTRAL} states that 
\[
\dim \, E^u + k^{\leq 0}_0 = 1,
\]
where $k^{\leq 0}_0$ is the number of nonpositive directions of $\langle L \cdot, \cdot\rangle$ on the generalized kernel of $\mathcal{L}$ modulo $\text{ker} L$. In order to prove that $k^{\leq 0}_0 = 0$, we suppose $v$ satisfies $(JL)^2 v =0$ and $Lv\neq 0$ and we must verify that $\langle Lv,v\rangle>0$. Writing $v=[f\ g]^T$, we therefore assume $L_-L_+ f=0$ and $L_+ L_-g=0$. By the analysis above, we must have $L_+f = cQ_a$ and $L_-g=0$, and we may assume $g=0$.  On the other hand, $L_+f = cQ_a$ guarantees that $c\neq 0$, and in fact we must have $f=-\tfrac{c}{2}(Q_a+x\cdot\nabla Q_a)$ (as this is the unique solution to $L_+f = cQ_a$).  The desired positivity then follows from directly verifying that 
\[
(L_+(Q_a+x\cdot\nabla Q_a),Q_a+x\cdot\nabla Q_a)>0.
\]
Thus $\dim E^u =1$, which yields (2). 

Finally, (3) follows from \cite[Theorem~2.1(6)]{SPECTRAL}.  It follows that there exists $e_0 \in \mathbb{C}$ with $\Re(e_0)>0$ and $Y_+ = Y_1 + i Y_2$ in $H^1_{\text{rad}}(\R^3)$ satisfying $L_- Y_2 = e_0 Y_1$ and $L_+Y_1 = -e_0 Y_2$. As $L_{\pm}$ are real valued operators, we conclude that $e_0 \in \R$. \end{proof}

\begin{remark}\label{H1dec} By the preceding proposition, we may define $Y_- = \overline{Y_+}$ so that $\mathcal{L}Y_\pm = \pm e_0 Y_\pm$. The results of \cite{SPECTRAL} then imply the following orthogonal decomposition:
\[
 H^1_{\text{rad}}(\mathbb{R}^3) = \bigoplus_{j=0}^3 X_j
\]
where $X_0 = \text{ker}\,L$, $X_1 = \text{span}\{Y_+\}$, $X_2 = \text{span}\{Y_-\}$, and
\[
X_3 = \{ u \in  (H_{\text{rad}}^1 \times H_{\text{rad}}^1)\backslash \text{ker}\,L: \langle L u, v\rangle = 0 \qtq{for all} v \in X_1 \oplus X_2 \}.
\]
Moreover, we have coercivity $\langle L|_{{X_3}}v, v\rangle \gtrsim \|v\|_{H^1}^2$. We note that the authors of \cite{CRITICAL} also utilized the results of \cite{SPECTRAL} in an analogous way.
\end{remark}

To conclude this section we derive some decay properties related to the eigenfunctions $Y_\pm\in H_a^2$ given above, in the spirit of those obtained in \cite{DM} for the standard NLS.   We note that for $a>0$, Lemma~\ref{L:EquivNorms} yields $Y_\pm\in H_a^{\frac32+}\subset H^{\frac32+}\subset L^\infty$.  In fact, we can obtain arbitrarily good decay away from zero.

\begin{prop}\label{Asymptoticseigen}
If $\phi \in C^\infty_c(\mathbb{R}^3 \backslash \{0\})$ then
\begin{equation}\label{Y-localized-away}
\| \phi(\tfrac{x}{R}) Y_\pm \|_{H^k} \lesssim_{\phi,k,\ell} R^{-\ell} \qtq{for any}\ell\geq 0.
\end{equation}
\end{prop}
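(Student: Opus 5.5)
We will prove \eqref{Y-localized-away} by exploiting the eigenvalue equations away from the origin, where the potential $a|x|^{-2}$ is smooth and bounded, in the spirit of \cite{DM}. Writing $Y_+=Y_1+iY_2$, the relations $L_-Y_2=e_0Y_1$ and $L_+Y_1=-e_0Y_2$ give the elliptic system
\[
(1+\H_a)Y_1 = 3Q_a^2Y_1 - e_0Y_2,\qquad (1+\H_a)Y_2 = Q_a^2Y_2 + e_0Y_1,
\]
and the same holds for $Y_-=\overline{Y_+}$ with $e_0\mapsto -e_0$. The key structural fact is that on the annulus $\{|x|\sim R\}$ the operator $1-\Delta+a|x|^{-2}$ is uniformly elliptic with smooth coefficients whose derivatives are bounded by $R^{-2-j}$. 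Moreover, by Proposition~\ref{Expdecay} (and the smoothness of $Q_a$ away from $0$, which follows from elliptic regularity applied to \eqref{GS_positive_a}), the coupling $Q_a^2$ and all its derivatives are $O(e^{-(2-\epsilon)R})$ there. The only term not decaying automatically is $\pm e_0 Y_{\mp}$, and it has order zero; the large zeroth-order term $1$ in $1-\Delta$ is what will absorb it.

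The plan is an energy-type iteration. Let $\phi\in C_c^\infty(\R^3\setminus\{0\})$ and set $\phi_R=\phi(\cdot/R)$. Pair the system with $\phi_R^2\bar Y$, in the vector sense with a suitable symplectic combination, so that the $e_0$ cross terms cancel: the pairing $\langle L Y, \phi_R^2 Y\rangle$-type identity uses $\mathcal{L}Y=e_0Y$ and the antisymmetry of $J$. This yields
\[
\|\phi_R Y\|_{H^1}^2 \lesssim R^{-1}\|\tilde\phi_R Y\|_{H^1}^2 + e^{-cR}\|Y\|_{L^2}^2,
\]
where $\tilde\phi$ is a slightly fatter cutoff coming from the commutator $[\Delta,\phi_R]$, whose coefficients are $O(R^{-1})$. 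If the cross terms do not cancel cleanly, we will instead use the decay structure directly. Taking real and imaginary parts of the eigen-relation, one finds $L_+L_-Y_2=-e_0^2Y_2$, so
\[
(1-\Delta+a|x|^{-2})^2 Y_2 + e_0^2 Y_2 = \text{(terms with a factor of } Q_a^2\text{ or its derivatives)}.
\]
Since the left-hand operator is positive with a spectral gap, localizing gives the same type of gain. Iterating over a nested family of cutoffs (with supports in $\{c_k R<|x|<C_k R\}$) $\ell$ times yields $\|\phi_R Y\|_{H^1}\lesssim R^{-\ell}$, using only that $Y_\pm\in H^1$ globally as the starting bound.

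Higher regularity then follows by differentiating. Applying the localized elliptic estimate for $1-\Delta$ to $\phi_R Y$, and treating $a|x|^{-2}\phi_RY$ and $[\Delta,\phi_R]Y$ as right-hand sides with smooth, $R$-bounded coefficients, upgrades $H^k$ control on a fatter annulus to $H^{k+2}$ control on a thinner one at the same polynomial rate. Induction on $k$ therefore gives \eqref{Y-localized-away} for every $k$ and $\ell$.

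The main obstacle is the first step: the $e_0$ coupling terms, which have no decay and no sign. Cancelling them requires choosing the pairing correctly, or else passing to the fourth-order scalar equation for $Y_2$ (and similarly for $Y_1$) and verifying that the commutator errors there are still lower order in $R^{-1}$. I would try the fourth-order route first, since positivity of $(1+\H_a)^2+e_0^2$ is transparent there.
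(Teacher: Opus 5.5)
Your proof is correct, but its decay mechanism differs from the paper's sketch. The paper puts $(1-\Delta)$ on the left, leaves $\pm e_0 Y_\mp$ on the right, and invokes elliptic regularity plus induction. In that form the $e_0$ terms give no gain in $R$: bounding them by the triangle inequality would need $e_0<1$, which is not known. So the induction implicitly has to invert the coupled free operator $1-\Delta-ie_0$ acting on $Y_1+iY_2$, whose symbol has modulus at least $1+|\xi|^2$, with the gain coming from $[\Delta,\phi_R]$ and $a|x|^{-2}$.

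You instead use the localized quadratic form. The plain pairing is enough; no symplectic combination is needed. Since $LY=-e_0JY$ and $JY\cdot Y\equiv 0$ pointwise, you get $\langle LY,\phi_R^2Y\rangle=0$ exactly. Combined with the identity $\int\nabla f\cdot\nabla(\phi^2f)=\int|\nabla(\phi f)|^2-\int|\nabla\phi|^2f^2$ for real $f$, and with $a>0$ (which lets you drop the potential term), this gives
\[
\|\phi_RY\|_{H^1}^2\lesssim R^{-2}\|\tilde\phi_RY\|_{L^2}^2+\sup_{|x|\geq cR}Q_a^2\,\|Y\|_{L^2}^2 .
\]
Here $\tilde\phi\equiv 1$ on the support of $\phi$, and that support lies in $\{|x|\geq c\}$. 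This is sharper than what you wrote and uses only $Y\in H^1$. The cancellation is exact, so drop the hedging and the fourth-order fallback. That route would only force you to localize $(1+\H_a)(Q_a^2Y_2)$ and third-order commutators for no benefit. Note also that it is the skew-symmetry, not the size of the ``1'' relative to $e_0$, that disposes of the coupling.

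What each route buys: yours needs nothing but the quadratic form and transfers verbatim to Proposition~\ref{F_reg}. There, for real $\lambda$, the $\lambda$-term is again skew and $\langle Lf,\phi_R^2f\rangle=-\langle JF,\phi_R^2f\rangle$. The resolvent route gains two derivatives per step and yields regularity and decay at once.

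Two bookkeeping points. In your regularity step, the commutator term $2\nabla\phi_R\cdot\nabla Y$ means you must pass from $H^{k+1}$, not $H^k$, on a fatter annulus to $H^{k+2}$ on a thinner one. This is harmless, since for fixed $k,\ell$ only finitely many nested cutoffs are used. And, as in the paper, the estimate is only meaningful for $R\geq 1$.
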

\begin{proof} 
As this part is fairly standard, we omit most of the details.  The basic idea is to write the system satisfied by $Y_1$ and $Y_2$ and utilize an elliptic regularity argument to demonstrate arbitrarily good decay away from zero. Indeed, we have 
\begin{align*}
\begin{cases}
(1- \Delta)Y_1 = -\frac{a}{|x|^2} Y_1 + 3 Q_a^2Y_1 - e_0 Y_2, \\
(1- \Delta)Y_2 = -\frac{a}{|x|^2} Y_2 +  Q_a^2Y_2 + e_0 Y_1.
\end{cases}
\end{align*}
We can therefore apply an inductive argument to obtain arbitrarily fast decay of the $H^k$ norms of $Y_1$ and $Y_2$.
\end{proof}

One additionally needs the following decay estimates in the construction of the heteroclinic orbits below. 

\begin{prop}\label{F_reg}
If $\lambda \in \mathbb{R}\backslash \sigma(\mathcal{L})$ and $F \in L^2(\mathbb{R}^3)$ satisfies estimates of the form \eqref{Y-localized-away}, then the solution $f \in H^2_a\subset H^1$ to $(\mathcal{L}-\lambda) f = F$ satisfies the same estimate. Moreover, $f \in H^{1,\frac{30}{11}}$.    
\end{prop}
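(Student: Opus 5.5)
Write $f=[f_1\ f_2]^t$ and $F=[F_1\ F_2]^t$. Since $\lambda\notin\sigma(\mathcal{L})$, the equation $(\mathcal{L}-\lambda)f=F$ has a unique solution $f=(\mathcal{L}-\lambda)^{-1}F$. Because $\mathcal{L}=JL$ and $J^{-1}=-J$, the equation can be written as the elliptic system
\[
L_+f_1+\lambda f_2=-F_2,\qquad L_-f_2-\lambda f_1=F_1 .
\]
So $(1+\H_a)f_j$ equals $L^2$ data. This gives $f\in H^2_a$, and Lemma~\ref{L:EquivNorms} gives $f\in H^1$. The plan is to mimic the proof of Proposition~\ref{Asymptoticseigen}, with $F$ now playing the role of a source term. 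Concretely, we will show by induction on $k$ that for every $\phi\in C_c^\infty(\R^3\backslash\{0\})$ and every $\ell\geq 0$,
\[
\|\phi(\tfrac{x}{R})f\|_{H^k}\lesssim_{\phi,k,\ell} R^{-\ell}.
\]

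First we establish a zeroth-order bound on an annulus for large $R$. Let $\chi_R=\phi(x/R)$, and write the system as
\[
(1-\Delta)f=-\tfrac{a}{|x|^2}f+\mathcal{V}f+\Lambda f+\tilde F,
\]
where $\mathcal{V}$ collects the $Q_a^2$ terms, $\Lambda$ collects the $\lambda$ coupling terms, and $\tilde F$ is a rearrangement of $F$. On $\operatorname{supp}\chi_R$ we have $|x|\sim R$. There $a|x|^{-2}\lesssim R^{-2}$, and $Q_a^2\lesssim e^{-(2-\epsilon)R}$ by Proposition~\ref{Expdecay}. Commuting $\chi_R$ through the equation produces commutator terms $R^{-1}\nabla\tilde\chi_R f$ and $R^{-2}\tilde\chi_R f$, where $\tilde\chi_R$ is a slightly enlarged cutoff.

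The principal obstacle is the $\lambda$-coupling. The operator $(1-\Delta)$ does not absorb the term $\Lambda$ by itself: it is an $O(1)$ off-diagonal perturbation, not a small one. To handle this, we localize the resolvent identity instead. We apply $(\mathcal{L}-\lambda)$ to $\chi_R f$:
\[
(\mathcal{L}-\lambda)(\chi_Rf)=\chi_RF+J[\chi_R,\H_a]f .
\]
Here $[\chi_R,\H_a]f=2R^{-1}\nabla\phi(\cdot/R)\cdot\nabla f+R^{-2}(\Delta\phi)(\cdot/R)f$. Boundedness of $(\mathcal{L}-\lambda)^{-1}$ from $H^{k}$ to $H^{k+1}$ (at least from $L^2$ to $H^1$, obtained from the $H^2_a$ mapping) then gives
\[
\|\chi_Rf\|_{H^{k+1}}\lesssim\|\chi_RF\|_{H^k}+R^{-1}\|\tilde\chi_Rf\|_{H^{k+1}}.
\]
For $k\geq1$ we use instead the elliptic system above, in which the potentials are smooth on the support. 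Starting from the trivial bound $\|\tilde\chi_Rf\|_{H^1}\lesssim 1$, we iterate over a nested family of cutoffs. Each iteration gains a factor $R^{-1}$, at the cost of one derivative and a slight enlargement of the support. After finitely many steps this gives $R^{-\ell}$ in every $H^k$. The bookkeeping is that of Proposition~\ref{Asymptoticseigen}. The hypothesis on $F$ supplies the bound $\|\tilde\chi_RF\|_{H^k}\lesssim R^{-\ell}$, which keeps the source term under control at every step.

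Finally, we show $f\in H^{1,\frac{30}{11}}$. Away from the origin, the decay estimates above combined with Sobolev embedding give this. Near the origin we use Lemma~\ref{L:EquivNorms} with $s=1$, $p=\tfrac{30}{11}$. The lemma applies because $\sigma<0$ for $a>0$, so $\tfrac{1+\sigma}{3}<\tfrac{11}{30}$. It gives $\|\nabla f\|_{L^{30/11}}\lesssim\|\H_a^{1/2}f\|_{L^{30/11}}$. We then bound the right-hand side by Sobolev embedding for $\H_a$: $\|\H_a^{1/2}f\|_{L^{30/11}}\lesssim\|\H_a^{1/2}\langle\H_a\rangle^{1/5}f\|_{L^2}$, which is finite since $f\in H^2_a$. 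The $L^{30/11}$ norm of $f$ itself follows from $H^1\subset L^{30/11}$.
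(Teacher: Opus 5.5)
Your proof is correct, but you carry out both steps differently from the paper.

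\textbf{Decay away from the origin.} The paper only points back to the elliptic-regularity induction of Proposition~\ref{Asymptoticseigen} on the system $(1-\Delta)f=\cdots$. It leaves implicit why the $O(1)$ coupling $\lambda$ does not block the base step. You settle this by localizing the resolvent identity and using $(\mathcal{L}-\lambda)^{-1}:L^2\to H^1$. The factor $R^{-1}$ then comes from $[\H_a,\chi_R]$ at fixed $H^1$ regularity, and no derivative is lost at that stage. Elliptic regularity is used only afterwards to raise $k$, where the coupling is lower order.

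This uses $\lambda\notin\sigma(\mathcal{L})$ in an essential way and needs no decay of $Q_a$ in the base step. For the same reason, though, it does not reprove Proposition~\ref{Asymptoticseigen}, where $\lambda=\pm e_0$ is an eigenvalue. An elliptic route covering both cases keeps the coupling on the left. The function $w=f_1+if_2$ satisfies $(1-\Delta-i\lambda)w=-a|x|^{-2}w+3Q_a^2f_1+iQ_a^2f_2-F_2+iF_1$, and $\bigl|1+|\xi|^2-i\lambda\bigr|\geq 1+|\xi|^2$. So only terms that are small on $|x|\sim R$ remain on the right.

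Since $\mathcal{L}$ acts on radial functions, you should use radial cutoffs in the localized identity. This costs nothing: for any $\phi\in C_c^\infty(\R^3\backslash\{0\})$ you can pick a radial $\psi\in C_c^\infty(\R^3\backslash\{0\})$ with $\psi\equiv 1$ on the support of $\phi$.

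\textbf{The bound $f\in H^{1,\frac{30}{11}}$.} The paper applies Lemma~\ref{L:EquivNorms} in $L^2$ with $s=\tfrac75$, bounding $\|\H_a^{7/10}f_1\|_{L^2}$ through the equation. It then uses the Euclidean embedding $\dot H^{\frac75}\hookrightarrow\dot H^{1,\frac{30}{11}}$. You instead apply the lemma in $L^{\frac{30}{11}}$ with $s=1$, follow with a Sobolev embedding for $\H_a$ (e.g.\ the lemma again with $s=\tfrac25$, $p=2$), and read off $\H_a^{7/10}f\in L^2$ directly from $f\in H^2_a$. Both routes work. Your separate treatment of the region away from the origin is unnecessary, since the norm-equivalence argument is global.
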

\begin{proof} The first assertion can be established via the argument used to obtain Proposition~\ref{Asymptoticseigen}.  Let us therefore focus on proving $f\in H^{1,\frac{30}{11}}$, and in fact let us simply show the result for the component $f_1$, which satisfies
\[
\H_a f_1 = -f_1 + 3Q_a^2 f_1 + \lambda f_2 + F_2.
\]
Using Lemma~\ref{L:EquivNorms}, self-adjointness of $\H_a$, and the equation above, we obtain  
\begin{align*}
\| |\nabla|^{\frac75} f_1 \|_2^2 & \lesssim ( \H_a^{\frac{7}{10}} f_1,\H_a^{\frac{7}{10}}f_1) \\
& = ( \H_a f_1,\H_a^{\frac 25} f_1) \\
& \lesssim |( f_1,\H_a^{\frac25}f_1)|+|(3Q_a^2 f_1,\H_a^{\frac25} f_1)|+|(\lambda f_2,\H_a^{\frac25}f_1)|+|(F_2,\H_a^{\frac25}f_1)|\\
&\lesssim \|f_1\|_{H^1}^2 + \|Q_a^2\|_{L^3} \|f_1\|_{H^1}^2 + (\lambda \|f_2\|_{L^2} + \|F_2\|_{L^2}) \|f_1\|_{H^1}\lesssim 1.
\end{align*}
To complete the argument, we use the Sobolev embedding $f\in \dot H^{1+\frac25}\hookrightarrow \dot H^{1,\frac{30}{11}}.$\end{proof}

\section{Existence and uniqueness of particular solutions}\label{S:Special}

In this section, we describe the construction of particular solutions $U^A(t)$ (indexed by $A\in\R$) to \eqref{NLSa} that converge exponentially to the ground state solution in $H^1$ as $t\to\infty$. The heteroclinic orbits described in Theorem~\ref{T} ultimately correspond to the choices $A=\pm 1$.  The strategy, adapted from works such as \cite{DM}, is to begin by building a sequence of approximate solutions to the linearized equation and then to apply a fixed point argument.  In fact, given the spectral results of Section~\ref{S:Spectral}, the equivalence of Sobolev spaces, and Strichartz estimates for $e^{-it\mathcal{H}_a}$, most of the arguments parallel those appearing in \cite{DM,CM2023} quite closely.  Thus, in order to streamline our presentation, our approach will be to state the main results we need and refer the reader to \cite{DM,CM2023} for the details of the proofs. 

We begin by recording some basic nonlinear estimates that rely on the bounds on $Q_a$ given in Proposition~\ref{boundsGS}. Recall that the norms $N(I)$ and $Z(I)$ were defined in Section~\ref{S:notation}, and that the notation $K(f)$ and $R(f)$ was introduced in \eqref{K(v)R(v)}.  For further details, we refer the reader to \cite[Lemma~5.6]{DM} (or \cite[Lemma~7.1]{CM2023}).

\begin{lem} \label{nonlinear} If $I \subset \mathbb{R}$ is such that $|I| \leq 1$, then we have the estimates
\begin{enumerate}
\item[(i)] $\|K(f)\|_{N(I)} \lesssim |I|^{\frac12} \|f\|_{Z(I)}$, \\
\item[(ii)] $\| R(f) - R(g)\|_{N(I)} \lesssim \|f-g\|_{Z(I)} (\|f\|_{Z(I)} + \|g\|_{Z(I)} + \|f\|_{Z(I)}^2 + \|g\|_{Z(I)}^2)$.
\end{enumerate}
\end{lem}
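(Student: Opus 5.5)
The proof is a direct application of H\"older's inequality in space and time. It uses only the integrability of $Q_a$ and $\nabla Q_a$ from Proposition~\ref{boundsGS}, together with Sobolev embedding applied to the components of the $Z(I)$ norm. The $N(I)$ norm has time exponent $\tfrac{10}{7}$, so in each term I would pair the Lebesgue exponents so that $\tfrac{7}{10}$ splits as a sum of reciprocals of exponents available in $Z(I)$. For estimate (i) I would leave $\tfrac12$ of the time exponent over for the factor $|I|^{\frac12}$. Throughout I would write $\|F\|_{H^{1,p}}\sim\|F\|_{L^p}+\|\nabla F\|_{L^p}$ and distribute the derivative by the product rule.

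For (i), note $|K(f)|\lesssim Q_a^2|f|$ and $|\nabla K(f)|\lesssim Q_a^2|\nabla f|+Q_a|\nabla Q_a||f|$. In time I would use $\tfrac{7}{10}=\tfrac12+\tfrac15$, so that H\"older gives $\|K(f)\|_{N(I)}\le |I|^{\frac12}\|K(f)\|_{L_t^5H_x^{1,\frac{10}{7}}}$, which can then be controlled by the $L_t^5H_x^{1,\frac{30}{11}}$ component of $Z(I)$. In space, $\tfrac{7}{10}-\tfrac{11}{30}=\tfrac13$, so the terms $Q_a^2 f$ and $Q_a^2\nabla f$ are bounded using $Q_a^2\in L^3$. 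For the term $Q_a\nabla Q_a f$, Sobolev embedding gives $H^{1,\frac{30}{11}}\hookrightarrow L^{30}$, and $\tfrac{7}{10}-\tfrac{1}{30}=\tfrac23$. It therefore suffices to have $Q_a\in L^\infty$ and $\nabla Q_a\in L^{\frac32}$. By Proposition~\ref{boundsGS} the latter holds because $(3-\beta)\tfrac32<6$.

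For (ii), I would expand $R(f)=Q_a^3G(Q_a^{-1}f)$. It is a sum of terms of the form $Q_a\,f_jf_k$ (quadratic) and $f_jf_kf_l$ (cubic), with no negative powers of $Q_a$. Hence $R(f)-R(g)$ is a sum of terms $Q_a(f-g)h$ with $h\in\{f,g\}$, and $(f-g)h_1h_2$. Each derivative falls either on $Q_a$ or on one of the factors.

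The cubic terms are the standard estimate for the cubic NLS. In $L_{t,x}^{\frac{10}{7}}$ I would place the differentiated factor in $L_{t,x}^{\frac{10}{3}}$ and the other two factors in $L_{t,x}^5$, using $\tfrac{7}{10}=\tfrac{3}{10}+\tfrac25$. The $L^5_{t,x}$ bound comes from $L_t^5H_x^{1,\frac{30}{11}}\hookrightarrow L_t^5L_x^5$, since $\tfrac{11}{30}-\tfrac16=\tfrac15$ allows using only half a derivative.

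For the quadratic terms I would use the time split $\tfrac{7}{10}=\tfrac12+\tfrac15$, taking one factor from $L_t^2H_x^{1,6}$ and the other from $L_t^5H_x^{1,\frac{30}{11}}$. In space:
\begin{itemize}
\item For $Q_a\nabla(f-g)\,h$ and the terms where the derivative lands on $h$, the exponents give $\tfrac{21}{30}-\tfrac16-\tfrac{11}{30}=\tfrac16$, so $Q_a\in L^6$ suffices.
\item For $\nabla Q_a\,(f-g)h$, I would put the $H^{1,6}$ factor in $L^\infty$ using $W^{1,6}\hookrightarrow L^\infty$ in three dimensions. This leaves $\nabla Q_a\in L^3$, which requires $(3-\beta)\cdot3<6$, i.e.\ $\beta>1$. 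This holds precisely because $a>0$.
\end{itemize}
Summing all the terms gives the bound (ii), with no power of $|I|$ needed.

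The main obstacle I anticipate is the terms where the derivative falls on $Q_a$. Because $\nabla Q_a\sim r^{-\frac32+\frac\beta2}$ is singular at the origin, the exponents must be chosen so that $\nabla Q_a$ lands in a space allowed by Proposition~\ref{boundsGS}. If the $L^\infty$ embedding were unavailable for some term, I would instead redistribute the exponents, for instance by placing $\nabla Q_a$ in $L^{\frac32+}$ and the $f$ factors in correspondingly larger Lebesgue spaces obtained from Sobolev embedding.
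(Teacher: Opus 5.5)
Your proposal is correct and follows the paper's route. The paper gives no proof and defers to \cite[Lemma~5.6]{DM} and \cite[Lemma~7.1]{CM2023}, noting that the estimates rest on H\"older and Sobolev embedding together with the integrability of $Q_a$ and $\nabla Q_a$ from Proposition~\ref{boundsGS}. Your exponent bookkeeping checks out, including the step where $\beta>1$ (i.e.\ $a>0$) is used to place $\nabla Q_a$ in $L^3$.
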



We turn to the construction of approximate solutions. We recall the linearized operator $\mathcal{L}$ defined in \eqref{linearized}.   The proof of the following proposition is an inductive argument; we refer the reader to \cite[Lemma~6.1]{DM} (or \cite[Proposition~7.2]{CM2023}) for complete details.  We note that the proof makes use of the decay estimates for the eigenfunctions obtained in Proposition~\ref{F_reg}.

\begin{prop} \label{approximatesol}
Let $A \in \mathbb{R}\backslash \{0\}$. There exists a sequence $\{ Z_k\}_{k \geq 1}$ of functions in $L_t^\infty H^2_a \cap L_t^\infty H^{1, {\frac{30}{11}}}$ such that
\[
Z_1 = A Y_+ \qtq{and} V_k = \sum_{j=1}^k e^{-je_0t} Z_j
\]
satisfy
\begin{equation}\label{Vk-error}
\partial_t V_k + \mathcal{L} V_k = iR(V_k) + \mathcal{O}(e^{-(k+1)e_0t}) \qtq{in} H^{1, \frac{10}{7}} \qtq{as} t \rightarrow\infty.
\end{equation}
\end{prop}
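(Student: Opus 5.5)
We construct the $Z_k$ by induction on $k$, following the scheme of \cite[Lemma~6.1]{DM}. For $k=1$, take $Z_1=AY_+$. Since $\mathcal{L}Y_+=e_0Y_+$, the function $V_1=e^{-e_0t}AY_+$ satisfies $\partial_tV_1+\mathcal{L}V_1=0$. Hence the error in \eqref{Vk-error} is $-iR(V_1)$. Because $R$ is at least quadratic in its argument, this error is $\mathcal{O}(e^{-2e_0t})$. To quantify it in $H^{1,\frac{10}{7}}$, we use the explicit form $R(v)=Q_a^3G(Q_a^{-1}v)$: the quadratic terms are $Q_a\cdot(\text{quadratic in }v)$ and the cubic terms are cubic in $v$. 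So we only need $Y_+\in H^2_a\cap H^{1,\frac{30}{11}}\subset L^\infty$ (from the remark before Proposition~\ref{Asymptoticseigen}), together with the bounds on $Q_a,\nabla Q_a$ from Proposition~\ref{boundsGS}. H\"older then puts products like $Q_aY\nabla Y$, $\nabla Q_a\,Y^2$ and $Y^2\nabla Y$ in $L^{\frac{10}{7}}$.

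For the inductive step, suppose $Z_1,\dots,Z_k$ have been built with $Z_j\in H^2_a\cap H^{1,\frac{30}{11}}$ and each satisfying the localized decay bounds \eqref{Y-localized-away} away from the origin. Since $R$ is a polynomial in $(v,\bar v)$ with $Q_a$-dependent coefficients, we expand
\[
iR(V_k)=\sum_{j=2}^{3k} e^{-je_0t}F_j,
\]
where each $F_j$ is a finite sum of products of the $Z_i$ with powers of $Q_a$ (at most cubic in the $Z_i$). Expanding $\partial_tV_k+\mathcal{L}V_k-iR(V_k)$ and using the inductive hypothesis, the terms with $j\le k$ cancel. We write the remaining error as $\epsilon_k=e^{-(k+1)e_0t}F_{k+1}+\mathcal{O}(e^{-(k+2)e_0t})$. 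We then choose $Z_{k+1}$ to solve
\[
(\mathcal{L}-(k+1)e_0)Z_{k+1}=F_{k+1}.
\]
This is solvable because $(k+1)e_0\notin\sigma(\mathcal{L})$: the only real eigenvalues of $\mathcal{L}$ off the imaginary axis are $\pm e_0$ (Proposition~\ref{spectral}, Remark~\ref{H1dec}), and $k+1\ge2$. With this choice, $\partial_t(e^{-(k+1)e_0t}Z_{k+1})+\mathcal{L}(e^{-(k+1)e_0t}Z_{k+1})$ cancels the $e^{-(k+1)e_0t}F_{k+1}$ term. The new cross terms in $R(V_{k+1})-R(V_k)$ carry a factor $e^{-(k+2)e_0t}$ or smaller, so the error improves to $\mathcal{O}(e^{-(k+2)e_0t})$.

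The regularity bookkeeping is handled by Proposition~\ref{F_reg}. Its hypotheses require that $F_{k+1}\in L^2$ and that it satisfy the bounds \eqref{Y-localized-away}. Membership in $L^2$ holds since $Q_a\in L^\infty$ and $Z_i\in L^\infty$. The localized bounds follow from the Leibniz rule, since both $Q_a$ (by its exponential decay, Proposition~\ref{Expdecay}, upgraded by elliptic regularity) and the $Z_i$ satisfy them. Proposition~\ref{F_reg} then gives $Z_{k+1}\in H^2_a\cap H^{1,\frac{30}{11}}$ with the same localized decay, which closes the induction.

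The main obstacle is verifying that the remainder is $\mathcal{O}(e^{-(k+1)e_0t})$ in $H^{1,\frac{10}{7}}$ rather than only in $L^2$. The difficulty is the singular behaviour near the origin: $\nabla Q_a$ is only in $L^r$ for $(3-\beta)r<6$, and $Z_j$ is controlled only in $H^{1,\frac{30}{11}}$ and $L^\infty$. My first attempt would be to place every derivative on a single factor, either $Q_a$ or $Z_i$. I would put that factor in $L^2$ or $L^{\frac{30}{11}}$ and the remaining factors in $L^\infty\cap L^2$ (using exponential decay of $Q_a$ at infinity and integrability of $Z_i$), choosing H\"older exponents compatible with Proposition~\ref{boundsGS}. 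Since $a>0$ gives $\beta>1$, we have $\nabla Q_a\in L^2\cap L^3$, which suffices for this.
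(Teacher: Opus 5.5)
Your proposal is correct and follows the same inductive scheme the paper defers to \cite[Lemma~6.1]{DM}. You expand $iR(V_k)$ in powers $e^{-je_0t}$, cancel the $e^{-(k+1)e_0t}$ coefficient by solving $(\mathcal{L}-(k+1)e_0)Z_{k+1}=F_{k+1}$ (possible since $(k+1)e_0\notin\sigma(\mathcal{L})$), use Proposition~\ref{F_reg} to carry the $H^2_a\cap H^{1,\frac{30}{11}}$ regularity and localized decay through the induction, and get the $H^{1,\frac{10}{7}}$ bound on the remainder from H\"older exactly as you describe. The only blemish is a harmless sign slip: with $F_j$ the coefficients of $iR(V_k)$, the residual $\partial_tV_k+\mathcal{L}V_k-iR(V_k)$ equals $-e^{-(k+1)e_0t}F_{k+1}+\mathcal{O}(e^{-(k+2)e_0t})$, which is precisely what your choice of $Z_{k+1}$ cancels.
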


In the proof, the functions $Z_k$ are constructed inductively.  For example, one first computes that $R(V_1)$ is written as a sum of terms of the form $e^{-2e_0 t}F_1$ and $e^{-3e_0t}F_2$. The function $Z_2$ is then defined as the solution to  $(\mathcal{L}-2e_0)Z_2=-F_1$ in order to cancel the $\mathcal{O}(e^{-2e_0t})$ contribution. Similarly, $Z_k$ is constructed as the solution to an equation of the form $(\mathcal{L}-ke_0)Z_k = -F_k$ in order to cancel terms of order $\mathcal{O}(e^{-ke_0t})$.

With the approximate solutions in hand, we may now apply a fixed point argument to obtain a true solution.  Details may be found in \cite[Proposition~6.3]{DM} (or \cite[Proposition~7.3]{CM2023}). 

\begin{prop} \label{spe}
Let $A \in \mathbb{R} \backslash \{0\}$. There exists $k_0>0$ sufficiently large such that the following holds: for any $k\geq k_0$, there exists $t_k\geq 0$ and a solution $U^A$ to \eqref{NLSa} such that 

\begin{equation} \label{eq:7.9}
\|U^A - e^{it}Q_a - e^{it}V_k^A\|_{Z([t,\infty))} \leq e^{-(k+\frac{1}{2})e_0 t} \qtq{for}t\geq t_k,
\end{equation}
where $V_k^A$ are as in Proposition~\ref{approximatesol}.

Furthermore, $U^A$ is the unique solution to \eqref{NLSa} satisfying \eqref{eq:7.9} for large $t$.

Finally, $U^A$ is independent of $k$ and satisfies
\begin{equation} \label{eq:7.10}
\|U^A(t) - e^{it}Q_a - Ae^{-c_0 t + it} Y_+\|_{H^1} \leq e^{-2e_0 t}\qtq{for large}t.
\end{equation}
\end{prop}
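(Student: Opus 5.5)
We will construct $U^A$ by a fixed point argument for the perturbation $h := e^{-it}U^A - Q_a - V_k^A$, following \cite[Proposition~6.3]{DM}. Writing $v = V_k + h$ in \eqref{linearized} and using \eqref{Vk-error}, $h$ must solve
\[
\partial_t h + \mathcal{L} h = i\bigl[R(V_k+h) - R(V_k)\bigr] - \epsilon_k,\qquad \|\epsilon_k(t)\|_{H^{1,\frac{10}{7}}}\lesssim e^{-(k+1)e_0 t}.
\]
To exploit Strichartz estimates for $e^{-it\H_a}$ rather than for the nonselfadjoint group $e^{-t\mathcal{L}}$, I rewrite this as a Schr\"odinger equation
\[
i\partial_t h - \H_a h - h = -K(h) - \bigl[R(V_k+h)-R(V_k)\bigr] + i\epsilon_k,
\]
and seek $h$ as the fixed point of the Duhamel map integrated from $t=+\infty$:
\[
\Phi(h)(t) = -i\int_t^\infty e^{-i(t-s)(\H_a+1)}\bigl[K(h) + R(V_k+h)-R(V_k) - i\epsilon_k\bigr](s)\,ds.
\]
The space is $\{h : \|h\|_{E} := \sup_{t\ge t_k} e^{(k+\frac12)e_0 t}\|h\|_{Z([t,\infty))}\le 1\}$. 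The Strichartz norms $Z$ and $N$ are equivalent to those built from $\H_a^{1/2}$ by Lemma~\ref{L:EquivNorms}, valid for these exponents when $a>0$ (here $\sigma<0$), which gives $\|\Phi(h)\|_{Z([t,\infty))}\lesssim \|\cdots\|_{N([t,\infty))}$.

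The key step is to estimate the right side on $[t,\infty)$ by splitting into unit intervals $[t+j,t+j+1]$ and summing. By Lemma~\ref{nonlinear}(i), the linear term contributes $\|K(h)\|_{N([t+j,t+j+1])}\lesssim \|h\|_{Z([t+j,\infty))}\lesssim e^{-(k+\frac12)e_0(t+j)}\|h\|_E$, and summing over $j$ gives a factor $C/(1-e^{-(k+\frac12)e_0})$. This is the main obstacle: the constant is not small. As in \cite{DM}, I would handle it by taking $k_0$ large, or better by absorbing $K$ via a short-interval argument. One uses that $\sum_j e^{-(k+\frac12)e_0 j}$ is close to $1$, so one must work on intervals of length $\tau$ small with $|I|^{1/2}$ from Lemma~\ref{nonlinear}(i). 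The sum is then $\sum_j C\tau^{1/2}e^{-(k+\frac12)e_0 j\tau}\approx C\tau^{1/2}/(1-e^{-(k+\frac12)e_0\tau})\approx C/((k+\tfrac12)e_0\tau^{1/2})$. Choosing $\tau=1$ and $k\ge k_0$ large makes this at most $\tfrac14$.

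The remaining terms are easier. By Lemma~\ref{nonlinear}(ii), with $\|V_k\|_{Z([t,\infty))}\lesssim e^{-e_0 t}$ (from $Z_j\in L^\infty H^2_a\cap H^{1,\frac{30}{11}}$), the difference $R(V_k+h)-R(V_k)$ gains an extra factor $e^{-e_0t}$ and is small for $t\ge t_k$. The error $\epsilon_k$ gives $e^{-(k+1)e_0t}\le \tfrac14 e^{-(k+\frac12)e_0 t}$ for $t_k$ large. Hence $\Phi$ maps the ball to itself and is a contraction, which yields $U^A$ satisfying \eqref{eq:7.9}.

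Uniqueness follows from the same contraction estimate. Any solution satisfying \eqref{eq:7.9} for $t\ge t'$ yields a fixed point in the ball (with $t_k$ enlarged), so it coincides with the one constructed. For independence of $k$: if $k'>k$, then $U^A_{k'}$ satisfies \eqref{eq:7.9} at level $k$ as well, since $V_{k'}-V_k=\mathcal{O}(e^{-(k+1)e_0t})$ in $Z$. Uniqueness at level $k$ then gives $U^A_{k'}=U^A_k$. Finally, \eqref{eq:7.10} follows from \eqref{eq:7.9} by taking the $L^\infty H^1$ part of $Z$ and noting $V_k - e^{-e_0t}AY_+ = \sum_{j\ge2}e^{-je_0t}Z_j=\mathcal{O}(e^{-2e_0t})$. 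The implicit constant is absorbed for large $t$, using a slightly stronger rate from $k\ge 2$ (here $c_0=e_0$).
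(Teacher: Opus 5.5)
Your overall scheme is the argument of \cite[Proposition~6.3]{DM} that the paper cites: Duhamel from $t=+\infty$ for $h=e^{-it}U^A-Q_a-V_k^A$ in the weighted ball $\sup_{t\ge t_k}e^{(k+\frac12)e_0t}\|h\|_{Z([t,\infty))}\le 1$, with uniqueness and $k$-independence read off from the contraction. However, the step you yourself flag as the main obstacle is resolved incorrectly, so as written the contraction does not close. You name the right ingredient (short intervals and the $|I|^{1/2}$ gain) but then choose $\tau=1$.

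With $\tau=1$, the first interval alone contributes $C'e^{-(k+\frac12)e_0t}\|h\|_E$ to $\|\Phi(h)\|_{Z([t,\infty))}$. Here $C'$ is the product of the Strichartz constant and the constant in Lemma~\ref{nonlinear}(i), and nothing makes it small. The summed factor $C'/(1-e^{-(k+\frac12)e_0})$ tends to $C'$, not $0$, as $k\to\infty$. Your approximation $C\tau^{1/2}/(1-e^{-(k+\frac12)e_0\tau})\approx C/((k+\frac12)e_0\tau^{1/2})$ uses $1-e^{-x}\approx x$. That is valid only when $x=(k+\frac12)e_0\tau$ is small, the opposite of the regime $\tau=1$, $k$ large.

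The parameters must be chosen in the other order. First fix $\tau_0\in(0,1]$ with $C'\tau_0^{1/2}\le\frac18$; this is where the factor $|I|^{1/2}$ is indispensable. Only then take $k_0$ with $e^{-(k_0+\frac12)e_0\tau_0}\le\frac12$. Summing over the intervals $[t+j\tau_0,t+(j+1)\tau_0]$, the $K$-contribution is then bounded, for every $k\ge k_0$, by
\[
C'\sum_{j\ge 0}\tau_0^{\frac12}e^{-(k+\frac12)e_0(t+j\tau_0)}\|h\|_E=\frac{C'\tau_0^{\frac12}}{1-e^{-(k+\frac12)e_0\tau_0}}\,e^{-(k+\frac12)e_0t}\|h\|_E\le\tfrac14 e^{-(k+\frac12)e_0t}\|h\|_E .
\]
This dependence of $k_0$ on $\tau_0$ is exactly why the proposition requires $k\ge k_0$.

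With that repair, the rest of your outline matches the cited argument: the $R$-difference and $\epsilon_k$ terms, uniqueness via the fixed point on $[t'',\infty)$, and independence of $k$. There are two minor inaccuracies.
\begin{enumerate}
\item The term $e^{-2e_0t}Z_2$ has size exactly $e^{-2e_0t}\|Z_2\|_{H^1}$. So \eqref{eq:7.9} gives \eqref{eq:7.10} only in the form $\lesssim e^{-2e_0t}$, not by absorbing constants; that form is all that is used later.
\item Lemma~\ref{L:EquivNorms} with $s=1$, $p=6$ requires $\frac{1+\sigma}{3}<\frac16$, i.e.\ $a>\frac34$. So it does not cover every exponent in $Z$ for all $a>0$, as you assert. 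The paper passes over this point as well, so it is not where you depart from it.
\end{enumerate}
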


We next consider uniqueness for solutions converging to the ground state solution. {The key ingredient is the following proposition.}  For details of the proof, we refer the reader to \cite[Proposition~5.9]{DM} (or \cite[Proposition~8.1]{CM2023}). 

\begin{prop}\label{aux_prop} Suppose $h$ is a solution to $\partial_t h + \mathcal{L} h = \ep$ satisfying
\[
\| h(t) \|_{H^1} \lesssim e^{-c_1 t}\qtq{and} \| \ep \|_{N([t, \infty))} \lesssim e^{-c_2 t}
\]
for some $0 < c_1 < c_2$. Then there exists $A \in \mathbb{R}$ such that
\begin{equation}\label{auxprop}
\| h(t) - A e^{-e_0 t} Y_+ \|_{H^1} \lesssim e^{-\frac{c_1 + c_2}{2} t}
\end{equation}
for all $t > 0$. Moreover, if $c_1 > e_0$ or $c_2 \le e_0$, then we may choose $A = 0$.
\end{prop}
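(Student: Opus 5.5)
We decompose $h(t)$ along the splitting of Remark~\ref{H1dec}, in the spirit of \cite[Proposition~5.9]{DM}. Write
\[
h(t)=\alpha_+(t)Y_++\alpha_-(t)Y_-+\beta(t)Q_a^{i}+h_3(t),\qquad h_3(t)\in X_3,
\]
where $Q_a^i:=[0\ Q_a]^t$ spans $\ker L$ in the radial setting (Proposition~\ref{coercivity}). The coefficients are obtained by pairing with suitable dual vectors: $\alpha_\pm=\langle Lh,Y_\mp\rangle/\langle LY_\pm,Y_\mp\rangle$, using that $\langle LY_+,Y_-\rangle\neq0$ while $\langle LY_\pm,Y_\pm\rangle=0$. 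These identities follow from $\mathcal{L}Y_\pm=\pm e_0Y_\pm$ and the skew-symmetry relation $\langle L\mathcal{L}u,v\rangle=-\langle Lu,\mathcal{L}v\rangle$.

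Next, differentiate the equation $\partial_th+\mathcal{L}h=\ep$. This gives scalar ODEs
\[
\alpha_\pm'\pm e_0\alpha_\pm=\langle L\ep,Y_\mp\rangle/c_\pm,\qquad \beta'=\langle \ep,\cdot\rangle\text{-type term}.
\]
For $h_3$ we have the energy identity $\frac{d}{dt}\langle Lh_3,h_3\rangle=2\langle L\ep_3,h_3\rangle$, where $\ep_3$ denotes the $X_3$-component of $\ep$; this uses the skew-symmetry again. Here $\ep$ is only controlled in $N([t,\infty))$, not pointwise in $H^1$. So every pairing must be integrated in time over $[t,\infty)$ and estimated by duality against a Strichartz-type norm of $h$. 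For the scalar terms this is harmless, since $Y_\pm$ and $Q_a$ lie in $H^{1,10/3}$ (Propositions~\ref{boundsGS}, \ref{F_reg}), and hence
\[
\Bigl|\int_t^{t+1}\langle L\ep,Y_\mp\rangle\Bigr|\lesssim\|\ep\|_{N([t,t+1])}.
\]
Summing over unit intervals then gives bounds of size $e^{-c_2t}$.

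The main obstacle is the $h_3$ estimate. There $\langle L\ep_3,h_3\rangle$ must be controlled by $\|\ep\|_{N}\|h\|_{Z}$, which requires a Strichartz bound for $h$ itself. I would get this first from Duhamel for $e^{-it\H_a}$, treating the potential terms $Q_a^2h$ perturbatively on unit intervals via Lemma~\ref{nonlinear}(i). This gives $\|h\|_{Z([t,t+1])}\lesssim e^{-c_1t}+e^{-c_2t}$. Integrating the energy identity from $t$ to $\infty$, where $h\to0$, and using coercivity on $X_3$ yields
\[
\|h_3(t)\|_{H^1}^2\lesssim e^{-(c_1+c_2)t}.
\]
For $\beta$, integrating from $\infty$ gives $|\beta(t)|\lesssim e^{-c_2t}$. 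For $\alpha_-$, the stable mode, Duhamel from $\infty$ gives $|\alpha_-(t)|\lesssim e^{-c_2t}$ when $c_2\neq e_0$; one could equally integrate from $0$, and in any case the bound is at most $e^{-\frac{c_1+c_2}{2}t}$ after adjusting.

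For $\alpha_+$, the mode decaying like $e^{-e_0t}$, set $\alpha_+(t)=e^{-e_0t}\bigl(\alpha_+(0)+\int_0^te^{e_0s}g(s)\,ds\bigr)$ with $|g|\lesssim e^{-c_2s}$. There are three cases. If $c_2>e_0$, the integral converges. We then put $A=\alpha_+(0)+\int_0^\infty e^{e_0s}g\,ds$, and the remainder satisfies $|\alpha_+(t)-Ae^{-e_0t}|\lesssim e^{-c_2t}$. If $c_2\le e_0$, we write instead $\alpha_+=-\int_t^\infty e^{-e_0(t-s)}g$, which is legitimate because $|\alpha_+|\lesssim e^{-c_1t}$ forces this representation, and take $A=0$. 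If $c_1>e_0$, the bound $|\alpha_+|\lesssim e^{-c_1t}$ directly forces $A=0$.

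Collecting the four components gives \eqref{auxprop} with rate $\min(c_2,\tfrac{c_1+c_2}{2})=\tfrac{c_1+c_2}{2}$. If the dual pairing for $h_3$ gives a slightly worse rate, one bootstraps: replace $c_1$ by $\tfrac{c_1+c_2}{2}$ and iterate finitely many times.
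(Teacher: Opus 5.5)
Your route is the one behind \cite[Proposition~5.9]{DM}, which the paper invokes. It has the same ingredients: Strichartz control of $h$ on unit intervals, the splitting along $Y_\pm$, $\ker L$ and the $L$-coercive part $X_3$, scalar ODEs for the coefficients, and the identity $\frac{d}{dt}\langle Lh_3,h_3\rangle=2\langle Lh_3,\varepsilon\rangle$. That identity is correct (it survives because $L(iQ_a)=0$) and gives $\|h_3(t)\|_{H^1}^2\lesssim e^{-(c_1+c_2)t}$. However, two steps fail as written.

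\emph{The $\beta$ step.} $X_3$ is not $\mathcal{L}$-invariant, because the generalized kernel of $\mathcal{L}$ is two-dimensional. Set $\Lambda Q_a:=Q_a+x\cdot\nabla Q_a$ and consider $[\Lambda Q_a\ 0]^t$. It satisfies $\langle L\Lambda Q_a,Y_\pm\rangle=-2(Q_a,Y_{\pm,1})=0$, since $L_-Y_{\pm,2}=\pm e_0Y_{\pm,1}$ and $L_-Q_a=0$. So it lies in $X_3$, yet $\mathcal{L}\Lambda Q_a=2iQ_a\in\ker L$. Normalize $X_3$ by $(h_{3,2},Q_a)=0$, where $h_3=h_{3,1}+ih_{3,2}$, and write your ODEs as $\alpha_\pm'\pm e_0\alpha_\pm=g_\pm$. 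Using $L_+Q_a=-2Q_a^3$, the $\alpha_\pm$ contributions cancel and
\[
\|Q_a\|_{L^2}^2\,\beta'=-2(h_{3,1},Q_a^3)+(\varepsilon_2,Q_a)-g_+(Y_{+,2},Q_a)-g_-(Y_{-,2},Q_a).
\]
The first term is of size $\|h_3\|_{H^1}$: at an instant where $h=\Lambda Q_a$ and $\varepsilon=0$, \eqref{pohozaev} gives $\beta'=-2$. So $|\beta(t)|\lesssim e^{-c_2t}$ is false in general. The fix is to bound $h_3$ first and then integrate $\beta'$ from $\infty$. This gives $|\beta(t)|\lesssim e^{-\frac{c_1+c_2}{2}t}$, which still suffices.

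\emph{The $\alpha_+$ step when $c_2\le e_0$.} Your representation $-\int_t^\infty e^{e_0(s-t)}g_+(s)\,ds$ diverges, since the integrand has size $e^{(e_0-c_2)s-e_0t}$. So the decay of $\alpha_+$ cannot force it. Use the forward formula $\alpha_+(t)=e^{-e_0t}\alpha_+(0)+\int_0^te^{-e_0(t-s)}g_+(s)\,ds$ instead. It gives $|\alpha_+(t)|\lesssim e^{-c_2t}$ if $c_2<e_0$ and $\lesssim(1+t)e^{-e_0t}$ if $c_2=e_0$. Both are $\lesssim e^{-\frac{c_1+c_2}{2}t}$ because $\frac{c_1+c_2}{2}<c_2\le e_0$, so $A=0$ is admissible.

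Two smaller points. First, $\alpha_-$ is the mode that grows under $e^{-t\mathcal{L}}$, not the stable one. The formula from $\infty$ gives $|\alpha_-(t)|\lesssim e^{-c_2t}$ for every $c_2$, whereas integrating from $0$ would not work. Second, the scalar pairings need no derivatives on $Y_\pm$ or $Q_a$, since $\langle L\varepsilon,Y_\mp\rangle=\langle\varepsilon,LY_\mp\rangle$ with $LY_\mp=\mp e_0J^{-1}Y_\mp$. This matters because Proposition~\ref{boundsGS} gives $\nabla Q_a\in L^{\frac{10}{3}}$ only when $\sqrt{1+4a}>\tfrac65$.

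With these corrections the four components combine to \eqref{auxprop} directly, and no bootstrap is needed.
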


Following the proof of \cite[Proposition~8.2]{CM2023}, we can now obtain the following uniqueness result.

\begin{prop} \label{uniqui} Suppose $u$ is a solution to \eqref{NLSa} satisfying
\[
\| u(t) - e^{it}Q_a\|_{H^1} \lesssim e^{-ct}
\]
for some $c>0$ and all large $t$. Then there exists $A \in \mathbb{R}$ such that $u = U^A$. 
\end{prop}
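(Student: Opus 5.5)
We follow the argument of \cite[Proposition~8.2]{CM2023} (itself modeled on \cite{DM}). Set $h(t):=e^{-it}u(t)-Q_a$. Then $h$ solves \eqref{linearized}, i.e. $\partial_t h+\mathcal{L}h=\ep$ with $\ep:=iR(h)$, and by hypothesis $\|h(t)\|_{H^1}\lesssim e^{-ct}$. The plan has three steps. First, upgrade the decay of $h$ to Strichartz norms. Second, bootstrap with Proposition~\ref{aux_prop} until we reach the rate $e^{-e_0t}$ and identify the parameter $A$. Third, compare $u$ with $U^A$ and use the uniqueness statement in Proposition~\ref{spe}.

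For the first step, apply Strichartz estimates for $e^{-it\H_a}$ (with Lemma~\ref{L:EquivNorms} to handle derivatives) on unit intervals $[t,t+1]$. Lemma~\ref{nonlinear}(i) lets us absorb the linear potential term $K(h)$ once $|I|$ is small, and Lemma~\ref{nonlinear}(ii) controls $R(h)$ for large $t$. This gives $\|h\|_{Z([t,t+1])}\lesssim e^{-ct}$, and summing over unit intervals gives $\|h\|_{Z([t,\infty))}\lesssim e^{-ct}$. Since $R$ is at least quadratic, Lemma~\ref{nonlinear}(ii) with $g=0$ then yields $\|\ep\|_{N([t,\infty))}\lesssim e^{-2ct}$.

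For the second step, Proposition~\ref{aux_prop} with $c_1=c$, $c_2=2c$ gives decay at rate $\tfrac{3}{2}c$ (taking $A=0$) as long as $2c\le e_0$. We iterate this, each time recomputing $\ep$ from the improved rate, until the rate exceeds $e_0$ in the $c_2$ slot. At that stage we obtain some $A\in\R$ with $\|h(t)-Ae^{-e_0t}Y_+\|_{H^1}\lesssim e^{-c't}$ for some $c'>e_0$. To keep the relevant inequalities strict, we may need to shrink $c$ slightly at each step so that no rate lands exactly on $e_0$.

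For the third step, if $A\neq 0$ we compare with $U^A$, and if $A=0$ we compare with $e^{it}Q_a$, which plays the role of $U^0$. Set $w(t):=e^{-it}(u-U^A)(t)$. By \eqref{eq:7.10}, $\|w(t)\|_{H^1}\lesssim e^{-c't}$ with $c'>e_0$. The function $w$ satisfies $\partial_t w+\mathcal{L}w=i[R(h)-R(h^A)]$, where $h^A:=e^{-it}U^A-Q_a$. Lemma~\ref{nonlinear}(ii) bounds the forcing by $e^{-(c'+\min(c,e_0))t}$, since both $h$ and $h^A$ decay at least like $e^{-\min(c,e_0)t}$. Because $c'>e_0$, Proposition~\ref{aux_prop} allows $A=0$ and improves the decay rate of $w$ by a fixed positive amount each time. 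Iterating, we get $\|w(t)\|_{H^1}\lesssim e^{-Mt}$ for every $M$, and $\|w\|_{Z([t,\infty))}\lesssim e^{-Mt}$ likewise. Now take $M>(k_0+\tfrac12)e_0$. Then $u$ satisfies \eqref{eq:7.9} for large $t$, and the uniqueness part of Proposition~\ref{spe} gives $u=U^A$.

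The main obstacle is the bookkeeping in the bootstrap: we must make sure every step improves the exponent by a uniform amount and never lands exactly on $e_0$. We also need to convert $H^1$ decay into $Z$-norm decay at each stage, and this uses the smallness of $K$ on short intervals from Lemma~\ref{nonlinear}(i).
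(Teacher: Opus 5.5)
Your proposal is correct and follows the same route the paper takes, since the paper simply defers to \cite[Proposition~8.2]{CM2023} (modeled on \cite{DM}): upgrade the $H^1$ decay of $h$ to $Z$-norm decay, bootstrap with Proposition~\ref{aux_prop} to extract $A$, then show that $e^{-it}(u-U^A)$ decays faster than any exponential and invoke the uniqueness part of Proposition~\ref{spe}. One bookkeeping slip: when $c_2=2c_1$ first exceeds $e_0$, the output rate $\tfrac32 c_1$ may still be $\le e_0$, so at that step you do not yet have $c'>e_0$; however, Proposition~\ref{aux_prop} still gives $\|h(t)\|_{H^1}\lesssim e^{-\frac32 c_1 t}$, and one further iteration then produces the rate $c'>e_0$.
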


We now record two useful corollaries. 

\begin{cor}\label{C:YouAreSpecial} If $u$ is a solution to \eqref{NLSa} satisfying
\[
\| u(t) - U^A(t)\|_{H^1} \lesssim e^{-ct}\qtq{for large}t
\]
for some $c > e_0$ and $A \in \mathbb{R}$, then $u = U^A$.
\end{cor}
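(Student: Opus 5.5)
The plan is to reduce Corollary~\ref{C:YouAreSpecial} to Proposition~\ref{uniqui} and then use the uniqueness clause of Proposition~\ref{spe}, with Proposition~\ref{aux_prop} doing the main work.

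\textbf{Step 1: identify $u$ as some $U^{A'}$.} By \eqref{eq:7.10} (or trivially if $A=0$, where $U^0=e^{it}Q_a$), we have $\|U^A(t)-e^{it}Q_a\|_{H^1}\lesssim e^{-e_0 t}$ for large $t$. Combined with the hypothesis and the triangle inequality, this gives $\|u(t)-e^{it}Q_a\|_{H^1}\lesssim e^{-\min(c,e_0)t}$. Proposition~\ref{uniqui} then yields $A'\in\R$ with $u=U^{A'}$. It remains to show $A'=A$.

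\textbf{Step 2: show $A'=A$.} Apply \eqref{eq:7.10} to both $U^{A'}$ and $U^A$ and subtract:
\[
\|U^{A'}(t)-U^A(t)-(A'-A)e^{-e_0t+it}Y_+\|_{H^1}\lesssim e^{-2e_0t}.
\]
The hypothesis bounds $\|U^{A'}(t)-U^A(t)\|_{H^1}$ by $e^{-ct}$ with $c>e_0$. Hence $|A'-A|\,e^{-e_0t}\|Y_+\|_{H^1}\lesssim e^{-ct}+e^{-2e_0t}$. Since $Y_+\neq 0$, letting $t\to\infty$ forces $A'=A$, so $u=U^A$.

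\textbf{Main obstacle.} The only delicate point is the case $A=0$, together with the convention that $U^0$ denotes $e^{it}Q_a$. With that convention, \eqref{eq:7.10} holds trivially for $A=0$, and Proposition~\ref{uniqui} must be read to include the value $A=0$.

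\textbf{Fallback route.} If one prefers not to rely on \eqref{eq:7.10} for $A'$, there is a more robust argument. Set $h=e^{-it}(u-U^A)$. It solves $\partial_th+\mathcal{L}h=i[R(v_u)-R(v_A)]$, where $v_u=e^{-it}u-Q_a$ and $v_A=e^{-it}U^A-Q_a$. Lemma~\ref{nonlinear}(ii), applied on unit intervals and summed, gives $\|\epsilon\|_{N([t,\infty))}\lesssim e^{-(c+\delta)t}$ for some $\delta>0$. Here one uses that both $v_u$ and $v_A$ decay exponentially in $Z$, which follows from $H^1$ decay via Strichartz on unit intervals. The conclusion is then immediate from Proposition~\ref{aux_prop}: since $c_1=c>e_0$, we may take the coefficient to be zero. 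This gives $\|h(t)\|_{H^1}\lesssim e^{-(c+\delta/2)t}$, and iterating yields arbitrarily fast decay. Consequently $u$ satisfies \eqref{eq:7.9} for large $k$, and the uniqueness statement in Proposition~\ref{spe} gives $u=U^A$.
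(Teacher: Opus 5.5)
Your main argument is correct and is essentially the paper's proof. The triangle inequality, combined with the $e^{-e_0t}$ decay of $U^A-e^{it}Q_a$, brings $u$ under Proposition~\ref{uniqui}, giving $u=U^{A'}$; comparing the leading terms $A'e^{-e_0t+it}Y_+$ and $Ae^{-e_0t+it}Y_+$ from \eqref{eq:7.10} with the hypothesis $c>e_0$ then forces $A'=A$. Subtracting \eqref{eq:7.10} directly is slightly cleaner than the paper's written identity $\|U^A-U^{A_0}\|_{H^1}=\|V^A-V^{A_0}\|_{H^1}$, and the fallback route through Proposition~\ref{aux_prop} is not needed.
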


\begin{proof} 
Observe that
\begin{align*}
\|u(t) - e^{it}Q \|_{H^1} &\lesssim \| u(t) - U^A(t) \|_{H^1} + \| V^A(t)\|_{H^1} \\
&\lesssim e^{-ct} + e^{-e_0 t}.
\end{align*}
Therefore, by Proposition \ref{uniqui}, there exists $A_0$ such that $u = U^{A_0}$. Moreover,
\[
\|U^A(t) - U^{A_0}(t)\|_{H^1} = \| V^A(t) - V^{A_0}(t) \|_{H^1} \sim |A - A_0| e^{-e_0 t} + \mathcal{O}(e^{-2e_0 t}).
\]
As $c > e_0$, we conclude that $A = A_0$.
\end{proof}

\begin{cor}\label{C:YouAreMoreSpecial}
For any $A>0$, there exists $T_A$ such that 
\[
U^A(t) = e^{-iT_A} U^{+1}(t + T_A).
\]
For $A<0$ the same statement holds with $U^{-1}$.
\end{cor}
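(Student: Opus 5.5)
The plan is to compare $U^A$ with a time-translated and phase-rotated copy of $U^{+1}$ using the expansion \eqref{eq:7.10}, and then invoke Corollary~\ref{C:YouAreSpecial}. Fix $A>0$. Since $e_0>0$, we can choose $T_A\in\R$ with $e^{-e_0T_A}=A$, that is, $T_A=-\tfrac{1}{e_0}\ln A$. Define
\[
u(t):=e^{-iT_A}U^{+1}(t+T_A).
\]
By time-translation and phase invariance of \eqref{NLSa}, $u$ is again a solution of \eqref{NLSa}.

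The key step is to expand $u$ using \eqref{eq:7.10} with $A=1$, evaluated at time $t+T_A$ (which is large whenever $t$ is large):
\[
\bigl\|U^{+1}(t+T_A)-e^{i(t+T_A)}Q_a-e^{-e_0(t+T_A)+i(t+T_A)}Y_+\bigr\|_{H^1}\lesssim e^{-2e_0(t+T_A)}.
\]
Multiplying by $e^{-iT_A}$ does not change the $H^1$ norm. Using $e^{-e_0T_A}=A$, this gives
\[
\bigl\|u(t)-e^{it}Q_a-Ae^{-e_0t+it}Y_+\bigr\|_{H^1}\lesssim_A e^{-2e_0t}.
\]
Combining this with \eqref{eq:7.10} for $U^A$ via the triangle inequality yields
\[
\|u(t)-U^A(t)\|_{H^1}\lesssim_A e^{-2e_0t}\qquad\text{for large }t.
\]
Here we read the exponent $c_0$ in \eqref{eq:7.10} as $e_0$, consistent with Proposition~\ref{approximatesol}, where $V_1=e^{-e_0t}AY_+$.

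Since $2e_0>e_0$, Corollary~\ref{C:YouAreSpecial} applies with $c=2e_0$ and gives $u=U^A$. This is exactly the claimed identity $U^A(t)=e^{-iT_A}U^{+1}(t+T_A)$. For $A<0$ the argument is identical with $U^{-1}$ in place of $U^{+1}$, choosing $T_A$ so that $e^{-e_0T_A}=-A=|A|$; then $-e^{-e_0T_A}Y_+=AY_+$, and the leading terms again match.

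The only delicate point is bookkeeping, and none of it is a genuine obstacle. First, the phase factor must be $e^{-iT_A}$: it is what cancels the extra $e^{iT_A}$ picked up by $e^{i(t+T_A)}Q_a$, so that the ground-state parts of $u$ and $U^A$ coincide exactly. Second, the bound \eqref{eq:7.10} holds only for large times, and this is harmless because $t+T_A\to\infty$ as $t\to\infty$. The substance of the result is already contained in Corollary~\ref{C:YouAreSpecial}, which itself rests on the uniqueness statement of Proposition~\ref{uniqui}.
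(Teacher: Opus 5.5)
Your proposal is correct and follows the paper's proof: the paper also takes $T_A=-\tfrac{\ln A}{e_0}$ and uses \eqref{eq:7.10} at times $t$ and $t+T_A$ to show $\|e^{iT_A}U^A(t)-U^{+1}(t+T_A)\|_{H^1}=\mathcal{O}(e^{-2e_0t})$. It then concludes via Corollary~\ref{C:YouAreSpecial}, with the case $A<0$ handled analogously using $U^{-1}$. Your reading of $c_0$ in \eqref{eq:7.10} as $e_0$ is the intended one.
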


\begin{proof} We estimate
\begin{align*}
\| e^{iT_A} U^A(t) - U^{+1}(t+ T_A) \|_{H^1}
&= \| e^{iT_A} U^A(t) - e^{i(t+ T_A)} \big( Q_a + e^{-e_0(t+T_A)} Y_+ \big) \|_{H^1}  \\ &
\quad\quad  + \mathcal{O}(e^{-2e_0 t}) \\
&= \| U^A(t) - e^{it}(Q_a + A e^{-e_0 t} Y_+) \|_{H^1} + \mathcal{O}(e^{-2e_0 t}) \\
&= \mathcal{O}(e^{-2e_0 t}),
\end{align*}
where we choose $T_A = -\tfrac{\ln A}{e_0}.$ The previous corollary then yields the result. Arguing analogously yields the result for $A<0$ as well. \end{proof}

In Section~\ref{S:Proof}, we will show that $Q_a^\pm:= U^{\pm 1}$ are the heteroclinic orbits described in Theorem~\ref{T}.

\section{Modulation Analysis}\label{S:Modulation}

The goal of this section is to analyze solutions when they are close to the orbit of the ground state, as quantified by the functional
\[
d(u(t)) \coloneq \bigl|\, \| u(t) \|_{\dot{H}^1_a}^2 - \| Q_a \|_{\dot{H}^1_a}^2 \bigr|.
\]
Most of the analysis in this section closely parallels arguments appearing in works such as \cite{DM, CM2023}.  Thus we will omit details for several of the results, referring the reader instead to the corresponding results in these works.

We begin with the following lemma. 

\begin{lem}\label{aux_mod} For any $\ep>0$, there exists $\delta>0$ such that if $v\in H^1_{\text{rad}}$ satisfies 
\[
M(v)=M(Q_a),\quad E_a(v)=E_a(Q_a), \qtq{and}d(v)<\delta,
\]
then
\[
\inf_{\theta\in\R} \|v-e^{i\theta}Q_a\|_{H_a^1}<\ep. 
\]
\end{lem}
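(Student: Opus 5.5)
We argue by contradiction combined with a compactness argument built on the sharp Gagliardo--Nirenberg inequality (Theorem~\ref{T:GN}) and the uniqueness of the radial ground state (Theorem~\ref{T:UniqGS}). Suppose the claim fails. Then there exist $\ep>0$ and a sequence $v_n\in H^1_{\text{rad}}$ with
\[
M(v_n)=M(Q_a),\quad E_a(v_n)=E_a(Q_a),\quad d(v_n)\to 0,
\]
but
\[
\inf_\theta\|v_n-e^{i\theta}Q_a\|_{H^1_a}\geq\ep \quad\text{for all } n.
\]

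\textbf{Step 1: the sequence is optimizing.} Since $d(v_n)\to0$, we have $\|v_n\|_{\dot H^1_a}\to\|Q_a\|_{\dot H^1_a}$. Energy conservation then gives
\[
\|v_n\|_{L^4}^4 = 2\|v_n\|_{\dot H^1_a}^2-4E_a(Q_a)\to \|Q_a\|_{L^4}^4,
\]
using $E_a(Q_a)=\tfrac12\|Q_a\|_{\dot H^1_a}^2-\tfrac14\|Q_a\|_{L^4}^4$. Combined with $M(v_n)=M(Q_a)$, this shows that $v_n$ is a bounded radial maximizing sequence for the ratio defining $C_a$, with all three norms converging to those of $Q_a$.

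\textbf{Step 2: compactness.} Pass to a subsequence with $v_n\rightharpoonup v$ weakly in $H^1$. To obtain strong $L^4$ convergence, we split space:
\begin{itemize}
\item On $|x|>R$, Lemma~\ref{L:RadialGNI} gives $\|v_n\|_{L^4(|x|>R)}^4\lesssim R^{-2}$ uniformly in $n$.
\item On $|x|\leq R$, Rellich--Kondrachov gives strong $L^4$ convergence.
\end{itemize}
Hence $\|v\|_{L^4}^4=\|Q_a\|_{L^4}^4$. By weak lower semicontinuity, $\|v\|_{L^2}\le\|Q_a\|_{L^2}$ and $\|v\|_{\dot H^1_a}\le\|Q_a\|_{\dot H^1_a}$. (Here $\|\cdot\|_{\dot H^1_a}^2$ is a nonnegative quadratic form equivalent to the $\dot H^1$ norm for $a>0$, so it is weakly lsc.) Then
\[
C_a\ge \frac{\|v\|_{L^4}^4}{\|v\|_{L^2}\|v\|_{\dot H^1_a}^3}\ge \frac{\|Q_a\|_{L^4}^4}{\|Q_a\|_{L^2}\|Q_a\|_{\dot H^1_a}^3}=C_a.
\]
This forces equality in both lower-semicontinuity bounds. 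Therefore $v_n\to v$ strongly in $H^1_a$, and $v$ is a radial optimizer with the same mass and kinetic energy as $Q_a$.

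\textbf{Step 3: identification of $v$.} This is the main obstacle, since the optimizer is complex-valued a priori. Because $|v|$ is also radial and $\|\nabla|v|\|_{L^2}\le\|\nabla v\|_{L^2}$ (the potential term is unchanged), $|v|$ is again an optimizer. Hence equality holds in the diamagnetic inequality, and the standard argument gives $v=e^{i\theta_0}|v|$ for a constant $\theta_0$.

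Next, the Euler--Lagrange equation for the Weinstein functional shows that $|v|$ solves
\[
-\H_a w-\alpha w+\mu w^3=0
\]
for some $\alpha,\mu>0$. Since $v$ has the same mass, kinetic energy, and $L^4$ norm as $Q_a$, the Pohozaev relations \eqref{pohozaev} fix the scaling and amplitude parameters, so $\alpha=\mu=1$. Thus $|v|$ is a nonnegative radial $H^1$ solution of \eqref{GroundStateEq}, and Theorem~\ref{T:UniqGS} gives $|v|=Q_a$.

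It follows that $v_n\to e^{i\theta_0}Q_a$ in $H^1_a$, which contradicts the assumption that $\inf_\theta\|v_n-e^{i\theta}Q_a\|_{H^1_a}\ge\ep$. This completes the argument.
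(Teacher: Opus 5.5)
Your proposal is correct and follows the paper's argument. It argues by contradiction, gets strong $L^4$ convergence from radial compactness, shows the weak limit is a Gagliardo--Nirenberg optimizer, and uses norm convergence to upgrade weak to strong $H^1$ convergence. You make explicit what the paper leaves implicit: the tail bound via Lemma~\ref{L:RadialGNI} plus Rellich--Kondrachov, and the identification $v=e^{i\theta_0}Q_a$ via the diamagnetic inequality, the Euler--Lagrange equation with \eqref{pohozaev}, and Theorem~\ref{T:UniqGS}. The constant-phase step should be run after $|v|=Q_a$ is established, since it needs $|v|>0$ on the connected set $\R^3\setminus\{0\}$.
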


\begin{proof} Suppose the statement were false. Then there exists $\ep_0>0$ and $\{f_n\}\subset H_{\text{rad}}^1$ satisfying $M(f_n)=M(Q_a)$, $E_a(f_n)=E_a(Q_a)$, $d(f_n)\to 0$, and
\begin{equation}\label{nope}
\inf_{\theta\in\R}\|f_n-e^{i\theta}Q_a\|_{H_a^1}\geq\ep_0. 
\end{equation}
In particular, it follows that
\begin{equation}\label{fnQa}
\|f_n\|_{L^4} \to \|Q_a\|_{L^4}. 
\end{equation}

As $f_n$ is bounded in $H^1$, we may pass to a subsequence to obtain a weak limit $\phi\in H^1$.  By the compactness of the embedding $H^1_{\text{rad}}\hookrightarrow L^4$, it follows that $f_n\to\phi$ strongly in $L^4$. By weak lower-semicontinuity followed by the mass and energy constraints and \eqref{fnQa}, we can derive that
\begin{align*}
C_a \|\phi\|_{L^2}\|\phi\|_{\dot H^1_a}^3 \leq C_a \|Q_a\|_{L^2}\|Q_a\|_{\dot H_a^1}^3 = \|Q_a\|_{L^4}^4 = \|\phi\|_{L^4}^4.
\end{align*}
It follows that $\phi$ is an optimizer for the Gagliardo--Nirenberg inequality (whence $\phi=e^{i\theta}Q_a$), and moreover that the weak $H^1$ convergence may be upgraded to strong convergence.  In particular, $f_n\to e^{i\theta}Q_a$ in $H^1$, contradicting \eqref{nope}.\end{proof}

We turn to the main result of this section.

\begin{prop} \label{modulationprop}
Let $u: I \times \mathbb{R}^3 \rightarrow \mathbb{C}$ be a solution to \eqref{NLSa} with $M(u_0) = M(Q_a)$ and $E_a(u_0) = E_a(Q_a)$, and define $I_0 = \{t \in I : d(u(t)) < \delta_0\}$. If $\delta_0$ is sufficiently small, then there exist functions $\alpha , \theta : I_0 \rightarrow \mathbb{R}$ and $h : I_0 \rightarrow H^1_a$ such that
\begin{equation}\label{uuuqqqhhh}
u(t) = e^{i \theta(t)}[(1+ \alpha(t))Q_a + h(t)]
\end{equation}
with
\[
|\alpha(t)| \sim \|h(t)\|_{H^1_a} \sim d(u(t))\qtq{and} | \alpha'(t)| + | \theta'(t) -1 | \lesssim d(u(t)).
\]
\end{prop}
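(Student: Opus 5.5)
We follow the standard modulation argument of \cite{DM, CM2023}, adapted to $\H_a$ in the radial setting. The argument has four steps: choose the phase, choose the amplitude, compare the three quantities $\alpha$, $h$, $d$, and finally compute the time derivatives of the parameters.

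\emph{Phase.} By Lemma~\ref{aux_mod}, if $\delta_0$ is small then for $t\in I_0$ the function $u(t)$ lies within $\ep$ of the circle $\{e^{i\theta}Q_a\}$ in $H^1_a$. We fix the phase $\theta(t)$ by imposing the orthogonality condition
\[
\Im\int e^{-i\theta(t)}u(t)\,Q_a\,dx=0 .
\]
The existence and uniqueness of such a $\theta$ near the near-minimizing phase follows from the implicit function theorem. Indeed, the $\theta$-derivative of this functional at $u=e^{i\theta}Q_a$ is $-\|Q_a\|_{L^2}^2\neq0$. The same argument shows that $\theta$ is $C^1$ in time, once one approximates by smooth $H^2_a$ data.

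\emph{Amplitude.} Write $e^{-i\theta}u=(1+\alpha)Q_a+h$, where $\alpha$ is chosen so that $h_1=\Re h$ satisfies the condition of Proposition~\ref{coercivity}(ii):
\[
(1+\alpha)=\frac{(\Re e^{-i\theta}u,\H_aQ_a)}{\|Q_a\|_{\dot H^1_a}^2},\qquad (h_1,\H_aQ_a)=0 .
\]
The phase condition gives $(h_2,Q_a)=0$. With these choices, Proposition~\ref{coercivity} applies to both components and yields
\[
(L_+h_1,h_1)+(L_-h_2,h_2)\gtrsim\|h\|_{H^1}^2 .
\]

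\emph{Comparing $\alpha$, $h$ and $d$.} Expand the conservation laws $M(u)=M(Q_a)$ and $E_a(u)=E_a(Q_a)$ around $Q_a$. For the energy we use that $E_a'(Q_a)=-M'(Q_a)/2$ in the appropriate sense, coming from \eqref{GroundStateEq}.
\begin{itemize}
\item The mass identity gives $2\alpha\|Q_a\|_2^2+2(h_1,Q_a)=O(\alpha^2+\|h\|_{L^2}^2)$.
\item Combining it with the energy identity, the linear terms cancel, and the quadratic form of the second variation of $E_a+\tfrac12M$ gives $\tfrac12\langle Lh,h\rangle=O(|\alpha|^2+|\alpha|\|h\|+\|h\|^3)$.
\item By coercivity, $\|h\|_{H^1}^2\lesssim|\alpha|\|h\|+\alpha^2+\|h\|^3$, so $\|h\|_{H^1}\lesssim|\alpha|$.
\item Next, $\|u\|_{\dot H^1_a}^2-\|Q_a\|_{\dot H^1_a}^2=2\alpha\|Q_a\|_{\dot H^1_a}^2+O(\alpha^2+\|h\|^2)$, because the cross term $(h_1,\H_aQ_a)$ vanishes. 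Hence $d(u)\sim|\alpha|$.
\item Finally, from the mass expansion, $|\alpha|\lesssim|(h_1,Q_a)|+\alpha^2+\|h\|^2\lesssim\|h\|$, so $|\alpha|\lesssim\|h\|$.
\end{itemize}
Together these give $|\alpha|\sim\|h\|_{H^1_a}\sim d(u)$.

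\emph{Derivatives.} Insert the decomposition into the equation, in the form of \eqref{lin1} with the phase $e^{i\theta}$ replacing $e^{it}$:
\[
\partial_th+\mathcal Lh=-\alpha'Q_a-i(\theta'-1)(1+\alpha)Q_a-i(\theta'-1)h+iK\text{-terms}+iR(\alpha Q_a+h).
\]
Differentiate the two orthogonality conditions in time. Pairing with $Q_a$ and with $\H_aQ_a$ moves $\mathcal L$ onto the test functions, which is legitimate because $L_-Q_a=0$ and $L_+Q_a$ is explicit. This produces a $2\times2$ linear system for $(\alpha',\theta'-1)$ whose matrix is close to a diagonal one with entries $\|Q_a\|_{\dot H^1_a}^2$ and $\|Q_a\|_2^2$. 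The right-hand side is $O(\|h\|_{H^1}+|\alpha|)$, and in fact $O(d)$, since the linear-in-$h$ terms are controlled by $\|h\|$. Inverting the system gives $|\alpha'|+|\theta'-1|\lesssim d(u)$.

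The main obstacle is justifying these pairings for $H^1$ solutions. Terms such as $(\H_ah,\H_aQ_a)$ require $\H_aQ_a\in H^1$, that is, $Q_a\in H^3_a$-type regularity near the origin, where $Q_a\sim r^{(\beta-1)/2}$. I would handle this by pairing in the weak form, using self-adjointness: for example $\langle h,\H_a^2Q_a\rangle$ is rewritten via the equation as $\H_a^2Q_a=\H_a(Q_a^3-Q_a)$, which is paired with $h$ in $H^1$. Proposition~\ref{boundsGS} provides the integrability needed for these pairings. Where this is not enough, I would regularize the data and pass to the limit.
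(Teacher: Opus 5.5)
Your proposal is correct and follows essentially the same route as the paper, which only sketches this argument by reference to \cite{DM, CM2023}. There, as in your proposal, the phase is fixed by $\Im(u(t),e^{i\theta(t)}Q_a)=0$ and $\alpha$ by pairing with $\H_aQ_a$, so that $(\Im h,Q_a)=(\Re h,\H_aQ_a)=0$; the coercivity of $L_\pm$ from Proposition~\ref{coercivity} then yields both $|\alpha|\sim\|h\|_{H^1_a}\sim d(u)$ and the bounds on $\alpha'$ and $\theta'-1$, and your mass/energy expansions and your justification of the pairings via $\H_aQ_a=Q_a^3-Q_a\in H^1$ supply details the paper leaves to those references.
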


The proof of this proposition largely follows the arguments of \cite[Lemma~3.6]{DM} and \cite[Lemma~3.7]{DM} (or \cite[Proposition~4.1]{CM2023}). The basic idea is to obtain an initial decomposition as in Lemma~\ref{aux_mod} and then use the Implicit Function Theorem to obtain modulation parameters that impose desired orthogonality conditions.  In particular, one defines $\theta(t)$ to impose $\Im(u(t),e^{i\theta(t)}Q_a)\equiv 0$ and then defines
\[
\alpha(t)=\frac{(\Re[e^{-i\theta(t)}u(t)-Q_a],\H_a Q_a)}{(Q_a,\H_a Q_a)}.
\]
The function $h(t)$ is then defined by \eqref{uuuqqqhhh} and satisfies $(\Im h,Q_a)=(\Re h,\H_a Q_a)=0$. These conditions imply coercivity for the operators $L_\pm$ (cf. Proposition~\ref{coercivity} above), which play a key role in obtaining the bounds appearing in the proposition. 

We turn to the following corollary, which follows from the arguments as in \cite[Corollary~4.3]{DM} or \cite[Proposition~3.1]{CM2023}. 

\begin{cor} \label{convofdistance}
Let $u$ be a global forward-in-time solution to \eqref{NLSa} with $M(u) = M(Q_a)$ and $E(u) = E(Q_a)$. If \begin{align}\label{intdistance}
\int_0^\infty d(u(s))\, ds < \infty,
\end{align}
then $\lim_{t\to\infty } d(u(t)) = 0$, and there exists $\theta_0 \in \mathbb{R}$ such that 
\[
\|u(t) - e^{i(t+ \theta_0)} Q_a\|_{H^1_a} \lesssim \int_t^\infty d(u(s))\, ds
\]
for all $t$ sufficiently large.
\end{cor}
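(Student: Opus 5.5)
The plan follows the argument of \cite[Corollary~4.3]{DM}: first show that $d(u(t))\to 0$, and then integrate the modulation equations from Proposition~\ref{modulationprop}.

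\textbf{Step 1: $d(u(t))\to 0$.} Because $\int_0^\infty d(u(s))\,ds<\infty$, there is a sequence $t_n\to\infty$ with $d(u(t_n))\to 0$. Suppose, for contradiction, that $\limsup_{t\to\infty} d(u(t))>0$. Fix a small $\epsilon\in(0,\delta_0)$. Then there are intervals $[s_n,s_n']$ with $s_n\to\infty$ on which $d$ rises from $\epsilon/2$ to $\epsilon$ while remaining in $[\epsilon/2,\epsilon]$. These intervals lie in $I_0$. There Proposition~\ref{modulationprop} gives $|\alpha'|\lesssim d$ and $|\alpha|\sim d$, so
\[
\epsilon\lesssim |\alpha(s_n')-\alpha(s_n)| \lesssim \int_{s_n}^{s_n'} d(u(s))\,ds .
\]
The right-hand side tends to $0$ as $n\to\infty$ by integrability, which is a contradiction. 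This argument uses only the local Lipschitz control of $\alpha$ and the equivalence $|\alpha|\sim d$. A cleaner variant is to show directly that $|d(u(t))-d(u(s))|\lesssim \int_s^t d$ whenever $[s,t]\subset I_0$. Continuity of $t\mapsto d(u(t))$ on the full interval follows from $u\in C_tH^1$.

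\textbf{Step 2: convergence of $\theta(t)-t$.} For $t\geq T$ we are inside $I_0$, so the decomposition \eqref{uuuqqqhhh} holds with $|\theta'(t)-1|\lesssim d(u(t))$. Since $d$ is integrable, $\theta(t)-t$ converges to some limit $\theta_0$, and
\[
|\theta(t)-t-\theta_0|\lesssim \int_t^\infty d(u(s))\,ds .
\]
The same reasoning, using $|\alpha'|\lesssim d$ and $\alpha\to0$, gives $|\alpha(t)|\lesssim\int_t^\infty d$.

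\textbf{Step 3: conclusion.} Using the decomposition,
\[
\|u(t)-e^{i(t+\theta_0)}Q_a\|_{H_a^1}\leq |e^{i\theta(t)}-e^{i(t+\theta_0)}|\,\|Q_a\|_{H_a^1}+|\alpha(t)|\,\|Q_a\|_{H_a^1}+\|h(t)\|_{H_a^1}.
\]
The first two terms are bounded by $\int_t^\infty d$ by Step 2.

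The main obstacle is the last term, $\|h(t)\|_{H_a^1}\sim d(u(t))$, which is a pointwise quantity rather than a tail integral. To control it, I would integrate $\alpha'$ from $t$ to $\infty$. Since $\alpha\to 0$,
\[
d(u(t))\sim|\alpha(t)|\leq\int_t^\infty|\alpha'(s)|\,ds\lesssim\int_t^\infty d(u(s))\,ds .
\]
This bounds $\|h(t)\|_{H_a^1}$ by the tail integral and completes the estimate.
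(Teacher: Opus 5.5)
Your argument is the one in \cite[Corollary~4.3]{DM}, and the paper simply defers to that argument. You run a crossing argument with $|\alpha'|\lesssim d$ and $|\alpha|\sim d$ to force $d(u(t))\to0$. You then integrate the modulation equations on $[t,\infty)$, and you handle the pointwise term via $\|h(t)\|_{H^1_a}\sim|\alpha(t)|\lesssim\int_t^\infty d(u(s))\,ds$. Steps~2 and~3 are correct as written.

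Step~1 has a slip. On your intervals $d$ only rises from $\epsilon/2$ to $\epsilon$. From $c\,d\leq|\alpha|\leq C\,d$ you therefore get only $|\alpha(s_n')-\alpha(s_n)|\geq(c-\tfrac{C}{2})\epsilon$, which need not be positive. So $\epsilon\lesssim|\alpha(s_n')-\alpha(s_n)|$ does not follow from the equivalence with unspecified constants, and your remark that the argument uses only this equivalence is wrong for these levels.

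The fix is to drop the lower level:
\begin{itemize}
\item Fix $\epsilon<\min\{\delta_0,\limsup_{t\to\infty}d(u(t))\}$.
\item Take $t_n\to\infty$ with $d(u(t_n))\to0$.
\item Let $s_n'>t_n$ be the first time with $d(u(s_n'))=\epsilon$.
\end{itemize}
Then $[t_n,s_n']\subset I_0$, $|\alpha(t_n)|\lesssim d(u(t_n))\to0$ and $|\alpha(s_n')|\gtrsim\epsilon$. Hence $\epsilon\lesssim\int_{t_n}^{s_n'}d(u(s))\,ds\to0$, a contradiction.

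Alternatively, you can keep your factor-$2$ window for small $\epsilon$, at the cost of extra input. The orthogonality $(\Re h,\H_aQ_a)=0$ gives $\|u\|_{\dot H^1_a}^2-\|Q_a\|_{\dot H^1_a}^2=(2\alpha+\alpha^2)\|Q_a\|_{\dot H^1_a}^2+\|h\|_{\dot H^1_a}^2$, so $d/|\alpha|\to2\|Q_a\|_{\dot H^1_a}^2$.

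Finally, your ``cleaner variant'' $|d(u(t))-d(u(s))|\lesssim\int_s^t d$ is not supplied by Proposition~\ref{modulationprop}. That proposition gives no control on the time derivative of $\|h\|_{\dot H^1_a}^2$. You do not need the variant, so drop it.
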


\section{Convergence for nonscattering constrained solutions}\label{S:Constrained}

In this section we seek to prove that any constrained solution which fails to scatter must converge exponentially to the ground state solution. The first step in proving this goal is a compactness result for non-scattering constrained solutions.

\begin{prop}\label{P:Precompactness} Let $a>0$.
Suppose $u_0\in H_{\text{rad}}^1(\R^3)$ satisfies $M(u_0)=M(Q_a)$, $E(u_0)=E(Q_a)$, and $\|u_0\|_{\dot{H}_a^1}<\|Q_a\|_{\dot{H}_a^1}$. Let $u:\R\times\R^3\to \mathbb{C}$ be the global solution to \eqref{NLSa}. If
\begin{equation}\label{P:PSInfnorm}
\|u\|_{L_{t,x}^5([0,\infty))}=\infty,
\end{equation}
then $\{u(t):t\in [0,\infty)\}$ is pre-compact in $H^1$.  A similar statement holds backwards in time.
\end{prop}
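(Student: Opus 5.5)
We argue by contradiction in the standard concentration-compactness style (as in \cite{DR, CM2023}). Fix a sequence $t_n\in[0,\infty)$; we must show that $\{u(t_n)\}$ has a subsequence converging in $H^1$. If $t_n$ is bounded this is immediate from continuity of the flow, so we assume $t_n\to\infty$. By Proposition~\ref{P:Coercivity}(i), $u(t_n)$ is bounded in $H^1_{\text{rad}}$, so we apply Proposition~\ref{P:LPD} to $f_n=u(t_n)$, obtaining profiles $\phi^j$, times $t_n^j$ and remainders $r_n^J$.

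\textbf{Step 1: at most one profile, with full mass-energy.} By the mass and $\dot H^1_a$ decouplings, each profile satisfies $M(\phi^j)\le M(Q_a)$ and $\|\phi_n^j\|_{\dot H^1_a}\le\|Q_a\|_{\dot H^1_a}+o(1)$. Since $\|\phi_n^j\|_{L^4}^4\le C_a\|\phi_n^j\|_{L^2}\|\phi_n^j\|_{\dot H^1_a}^3$, the energies of the profiles (and of the remainder) are nonnegative and decouple. Hence, if two or more profiles were nonzero (or if the remainder carried nontrivial mass or energy), each nonlinear profile would satisfy $M(\phi^j)E_a(\phi^j)<\mathcal{E}_a$ and $\|\phi^j\|_{L^2}\|\phi^j\|_{\dot H^1_a}<\mathcal{K}_a$. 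Theorem~\ref{T:KMVZ} (radial version for $a>0$) then gives global scattering nonlinear profiles $v^j$, including those with $t_n^j\to\pm\infty$, which are built from wave operators. Using orthogonality \eqref{tnjorthogonal}, \eqref{rnjvanish}, and Proposition~\ref{P:Stab} with the approximate solution $\sum_j v^j(t+t_n^j)+e^{-it\H_a}r_n^J$, we would obtain $\|u\|_{L^5_{t,x}([t_n,\infty))}\lesssim1$ uniformly in $n$, contradicting \eqref{P:PSInfnorm}. The same contradiction arises if there are no profiles at all, by \eqref{rnjvanish} and small-data theory. So $J^*=1$, $M(\phi^1)=M(Q_a)$, $E_a(\phi^1)=E_a(Q_a)$, and $r_n^1\to0$ in $H^1$; the last point follows because $\dot H^1_a$ and $L^2$ of the remainder must vanish for equality to hold in the decouplings.

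\textbf{Step 2: rule out $t_n^1\to\pm\infty$.} If $t_n^1\to+\infty$, then $\|e^{-it\H_a}u(t_n)\|_{L^5_{t,x}([0,\infty))}\le\|e^{-it\H_a}\phi^1\|_{L^5([t_n^1,\infty))}+o(1)\to0$, so by small-data theory and stability $u$ scatters forward, which is a contradiction. If $t_n^1\to-\infty$, the analogous estimate on $(-\infty,0]$ shows that $\|u\|_{L^5((-\infty,t_n])}\to0$. Since $t_n\to\infty$, this forces $u\equiv0$, which is impossible because $M(u)=M(Q_a)$. Hence $t_n^1\equiv0$ and $u(t_n)\to\phi^1$ in $H^1$, which proves precompactness.

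\textbf{Main obstacle.} The delicate point is Step 1. We must justify that every sub-threshold profile satisfies the \emph{kinetic} constraint, and also handle the dichotomy in which the energy is split while a single profile has $M(\phi^1)E_a(\phi^1)<\mathcal{E}_a$. We would treat this with the sharp Gagliardo--Nirenberg inequality (Theorem~\ref{T:GN}), following \cite{DR, KVMZ}: the strict inequality $\|u\|_{\dot H^1_a}<\|Q_a\|_{\dot H^1_a}$ is preserved and passes to the profiles, giving energy positivity. Radiality ensures that no translation parameters are present, which simplifies the decoupling compared with \cite{DR}. The backward-in-time statement follows by time reversal.
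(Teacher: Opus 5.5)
Your proposal is correct and follows essentially the same route as the paper: you apply the radial profile decomposition to $u(t_n)$, use the sharp radial Gagliardo--Nirenberg inequality to get nonnegative profile energies, and use nonlinear profiles plus stability to rule out any splitting of the mass-energy (the paper phrases this as the dichotomy $\sup_j\limsup_n M(\phi_n^j)E_a(\phi_n^j)=\mathcal{E}_a$ versus $\leq\mathcal{E}_a-3\delta$). You then exclude $t_n^1\to\pm\infty$ by comparison with the free evolution; under the paper's convention $\phi_n^j=e^{it_n^j\H_a}\phi^j$ your two cases in Step~2 are interchanged (the free evolution is small on $[0,\infty)$ when $t_n^1\to-\infty$), which is harmless since you treat both.
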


\begin{proof} The arguments needed to prove Proposition~\ref{P:Precompactness} are by now fairly standard.  This is especially true in the radial setting, in which no translation parameters appear in the linear profile decomposition (and hence the additional challenges due to the broken translation symmetry of \eqref{NLSa} vanish completely). Therefore, we will proceed by giving a fairly brief summary of the arguments.

Let $\{\tau_n\}\subset [0,\infty)$ be a sequence of times. By continuity of the flow, we may assume that $\tau_n\to \infty$. Applying the linear profile decomposition, Proposition~\ref{P:LPD}, we write for any finite $J\leq J^*$
\[
u(\tau_n)=\sum_{j=1}^J\phi_n^j+r_n^J,\qtq{with}\phi_n^j=e^{it_n^j\H_a}\phi^j(x)
\]
along a subsequence with all the properties detailed in Proposition~\ref{P:LPD}. We will show that $J^*=1$, $t_n^1\equiv 0$, and that the remainder $r_n^J$ vanishes strongly in $H^1$ as $n\to\infty$, which will yield the result. To this end, we will show that all other scenarios contradict \eqref{P:PSInfnorm}.

We can quickly rule out the case $J^*=0$, for in that case an application of the stability theory with the approximate solutions $e^{-it\H_a}u(t_n)$ implies space-time bounds for $u$ on $[t_n,\infty)$ for all $n$ large, contradicting \eqref{P:PSInfnorm}.


It follows that $J^*\geq 1$. Now, the decoupling statement in the linear profile decomposition and Proposition~\ref{P:Coercivity} imply
\[
\limsup_{n\to\infty}\|\phi_n^j\|_{L_x^2}\|\phi_n^j\|_{\dot{H}_a^1}\leq\mathcal{K}_a.
\]
Thus the sharp Gagliardo--Nirenberg inequality implies that $\liminf_n E_a(\phi_n^j)\geq 0$ for all $j$. Similar considerations show that $\liminf_{n} E_a(r_n^J)\geq 0$ for any finite $J$. Thus there are two possible cases:
\begin{align}
&\text{Case 1:}\quad\sup_j\limsup_n M(\phi_n^j)E_a(\phi_n^j)=\mathcal{E}_a,\label{E:PSJ*=1}\\
&\text{Case 2:}\quad \sup_j\limsup_n M(\phi_n^j)E_a(\phi_n^j)\leq \mathcal{E}_a-3\delta\qtq{for some} \delta>0.\label{E:PSJ*G1}
\end{align}

We now show that \eqref{E:PSJ*G1} is inconsistent with \eqref{P:PSInfnorm}. Arguing as above, we see that for any $j$ we have that 
\[
\|\phi_n^j\|_{L_x^2}\|\phi_n^j\|_{\dot{H}_a^1}< (1-\delta')\mathcal{K}_a,
\]
for $n$ sufficiently large. We associate to each $\phi^j$ a corresponding scattering solution to \eqref{NLSa}. If $t_n^j\equiv0$ we define $v^j=v_n^j$ as the global scattering solution to \eqref{NLSa} with initial data $\phi^j$. If instead $t_n^j\to \pm\infty$ we define $v^j$ as the global solution to \eqref{NLSa} that scatters to $\phi^j$ as $t\to\pm\infty$, and then define $v_n^j(t,x)=v^j(t+t_n^j,x)$. 

We then define an approximate solution to \eqref{NLSa} by
\[
u_n^J=\sum_{j=1}^Jv_n^j+e^{it\H_a}r_n^J.
\]
By construction we have $u_n^J(0)-u(t_n)\to 0$ in $H^1$ as $n\to\infty$. We claim that 
\begin{align}
&\limsup_{J\to J^*}\limsup_{n\to\infty}\{\|u_n^J(0)\|_{H_a^1}+\|u_n^J\|_{L_{t,x}^5}\}\lesssim 1,\label{E:PS_unJ_goodbounds}\\
&\limsup_{J\to J^*}\limsup_{n\to\infty}\||\nabla|^\frac{1}{2}e_n^J\|_{L_{t,x}^\frac{10}{7}}=0, \label{E:PS_unJ_errorbounds}
\end{align}
where the error $e_n^J$ is defined by
\[
e_n^J=(i\partial_t-\H_a)u_n^J+|u_n^J|^2u_n^J.
\]
In this case, we can apply Proposition~\ref{P:Stab} and obtain a contradiction to \eqref{P:PSInfnorm}.

Both \eqref{E:PS_unJ_goodbounds} and \eqref{E:PS_unJ_errorbounds} rely on the following orthogonality result, which follows from \eqref{tnjorthogonal} and approximation by $C_c^\infty$ functions. 
\begin{lemma}[Orthogonality, \cite{KVMZ} Lemma 7.4]\label{L:PSOrthog}
For any $j\not=k$ we have
\[
\|v_n^jv_n^k\|_{L_{t,x}^\frac{5}{2}}+\|v_n^jv_n^k\|_{L_{t,x}^\frac{5}{3}}+\|v_n^jv_n^k\|_{L_{t}^\frac{15}{7}L_x^\frac{45}{31}}+\|(\H_a)^\frac{31}{120}v_n^j(\H_a)^\frac{31}{120}v_n^k\|_{L_t^\frac{15}{7}L_x^\frac{45}{31}}\to 0
\]
as $n\to\infty.$
\end{lemma}

%
%
%
%

%
Using Lemma~\ref{L:PSOrthog} and the space-time bounds obeyed by each individual solution $v_n^j$, we can firstly obtain \eqref{E:PS_unJ_goodbounds}.  To estimate the error and obtain \eqref{E:PS_unJ_errorbounds}, we write $F(z)=-|z|^2z$ and decompose $e_n^J$ as follows
\begin{align}
e_n^J&=\sum_{j=1}^JF(v_n^j)-F\bigl(\sum_{j=1}^Jv_n^j\bigr)\label{E:PS_error_term1}\\
&\quad+F(u_n^J-e^{-it\H_a}r_n^J)-F(u_n^J)\label{E:PS_error_term2}
\end{align}
For \eqref{E:PS_error_term1}, we can use pointwise bounds and Lemma~\ref{L:PSOrthog} to obtain vanishing in $L_{t,x}^{\frac{10}{7}}$.  Interpolating this with boundedness in the presence of slightly more than $\tfrac12$ derivatives (which follows from the global space-time bounds of the individual solutions $v_n^j$), we obtain the desired estimate.  For \eqref{E:PS_error_term2}, we similarly use interpolation, this time relying on \eqref{rnjvanish} to obtain vanishing with no derivatives.

At this point, we have shown that \eqref{E:PSJ*=1} holds. In particular, using 
\[
\liminf_{n}E_a(\phi_n^j)\geq 0\qtq{and}\liminf_{n}E_a(r_n^J)\geq 0,
\]
we derive that $J^*=1$, and so
\[
u(\tau_n)=\phi_n^1+r_n^1,\qtq{with}r_n^1\to 0\qtq{strongly in}H^1.
\]
To conclude the argument, it remains to see that $t_n^1\equiv 0$.  Indeed, if instead $t_n^1\to\pm\infty$, then approximation by free solutions contradicts \eqref{P:PSInfnorm}. \end{proof}

We now establish that forward non-scattering constrained solutions must converge exponentially to the ground state solution as $t\to\infty$. 

\begin{prop} \label{compact_constrained} Let $a>0$. Suppose $u$ is a global radial solution to \eqref{NLSa} satisfying 
\[
M(u) = M(Q_a), \qquad E_a(u) = E_a(Q_a), \qquad \|u(0)\|_{\dot{H}^1_a} < \|Q_a\|_{\dot{H}^1_a},
\]
and
\[
\|u\|_{L_{t,x}^5([0,\infty)\times\R^3)}=\infty.
\]

Then there exist constants $C, c > 0$ and $\theta_0 \in \mathbb{R}$ such that 
\[
\|u(t) - e^{i(t+\theta_0)}Q_a\|_{H^1} \leq C e^{-ct}\qtq{for all}t\geq 0.
\]
\end{prop}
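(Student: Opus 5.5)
The plan follows the Duyckaerts--Merle scheme. First I will show $\int_0^\infty d(u(t))\,dt<\infty$ using a localized virial argument. Then Corollary~\ref{convofdistance} gives convergence to $e^{i(t+\theta_0)}Q_a$, and a self-improving tail bound upgrades this to exponential convergence.

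\textbf{Localized virial identity.} Take a radial cutoff weight $w_R(x)=R^2\phi(x/R)$ with $\phi(x)=|x|^2$ for $|x|\le1$, and set $V_R(t)=2\Im\int \bar u\,\nabla w_R\cdot\nabla u\,dx$. Since $a>0$, the potential contributes a term $\int \tfrac{a}{|x|^4}x\cdot\nabla w_R|u|^2\,dx$ with a favorable structure. The identity reads
\[
V_R'(t)=8\Bigl[\|u\|_{\dot H^1_a}^2-\tfrac34\|u\|_{L^4}^4\Bigr]+\mathcal{O}\Bigl(\int_{|x|>R}|\nabla u|^2+\tfrac{|u|^2}{|x|^2}+|u|^4\,dx\Bigr).
\]
At the threshold mass-energy, $\|u\|_{\dot H^1_a}^2-\tfrac34\|u\|_{L^4}^4=-2\|u\|_{\dot H^1_a}^2+6E_a(Q_a)=2\bigl(\|Q_a\|_{\dot H^1_a}^2-\|u\|_{\dot H^1_a}^2\bigr)=2d(u)$, using the Pohozaev identities \eqref{pohozaev} and $\|u\|_{\dot H^1_a}<\|Q_a\|_{\dot H^1_a}$ from Proposition~\ref{P:Coercivity}. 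The main term is therefore $16\,d(u(t))$. Also $|V_R(t)|\lesssim R\sup_t\|u\|_{H^1}^2\lesssim R$.

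\textbf{Controlling the error term (main obstacle).} I need the error to be at most $\tfrac18 d(u(t))$ uniformly in $t$ for a single large $R$. I split into two regimes.
\begin{itemize}
\item When $d(u(t))<\delta_0$, I use the modulation decomposition from Proposition~\ref{modulationprop}, $u=e^{i\theta}[(1+\alpha)Q_a+h]$ with $|\alpha|+\|h\|_{H^1}\sim d$. The error is really a difference of exterior contributions: the exact identity at $Q_a$ yields exterior terms that are linear and quadratic in $(\alpha,h)$ times tails of $Q_a$. These are at most $\delta(R)\,d(u)$, where $\delta(R)$ comes from the decay of $Q_a$ (Proposition~\ref{Expdecay}), plus $\mathcal{O}(d^2)$.
\item When $d(u(t))\ge\delta_0$, I use precompactness from Proposition~\ref{P:Precompactness}. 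Since the orbit is precompact and radial (no translation parameter is needed), the exterior $H^1$ and $L^4$ mass is uniformly small for large $R$, so the error is at most $\epsilon\delta_0\le\epsilon\, d$. Here Lemma~\ref{L:RadialGNI} also handles the $L^4$ tail directly.
\end{itemize}
Together these give $V_R'(t)\ge 8\,d(u(t))$ for all $t\ge0$.

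\textbf{From integrability to exponential convergence.} Integrating over $[t,T]$ gives $\int_t^T d(u(s))\,ds\lesssim R$, so $\int_0^\infty d<\infty$. For the tail, I would like a bound of the form $|V_R(t)|\lesssim d(u(t))$, so that $\int_t^\infty d\lesssim \sup_{s\ge t} d(u(s))$.
\begin{itemize}
\item Precompactness together with $\int_0^\infty d<\infty$ forces $d(u(t))\to0$; alternatively this follows from Corollary~\ref{convofdistance}.
\item So for large $t$ the modulation decomposition applies. Writing $V_R$ in terms of it, the $Q_a$ part vanishes because $Q_a$ is real, the phase factor drops out, and we get $|V_R(t)|\lesssim \|h\|_{H^1}+|\alpha|\lesssim d(u(t))$.
\item Hence $\int_t^\infty d(u(s))\,ds\lesssim \limsup_{T\to\infty}|V_R(T)|+|V_R(t)|\lesssim d(u(t))$.
\end{itemize}
By a Gronwall-type argument, $\int_t^\infty d\le Ce^{-ct}$. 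Then Corollary~\ref{convofdistance} yields
\[
\|u(t)-e^{i(t+\theta_0)}Q_a\|_{H^1}\lesssim e^{-ct},
\]
and for bounded $t$ the estimate holds trivially after enlarging $C$.
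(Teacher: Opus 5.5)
Your proposal is correct and follows essentially the same route as the paper. Both use a localized virial estimate whose error is controlled by the modulation decomposition near $e^{i\theta}Q_a$ and by precompactness away from it, then $|P_R[u(t)]|\lesssim R\,d(u(t))$ to get $\int_t^\infty d\lesssim d(u(t))$, then Gronwall and Corollary~\ref{convofdistance}. One minor slip: at threshold the main term is $8\bigl[\|u\|_{\dot H^1_a}^2-\tfrac34\|u\|_{L^4}^4\bigr]=4\,d(u(t))$, not $16\,d(u(t))$, which only changes constants.
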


In order to prove this result, we begin with a virial estimate that incorporates the modulation analysis of Section 6.

\begin{lemma}\label{virialscat} Let $u$ be as in Proposition~\ref{compact_constrained}.  There exists a constant $C>0$ such that for any $[t_1, t_2] \subset [0, \infty)$ the following estimate holds:
\[
\int_{t_1}^{t_2} d(u(t))\, dt \leq C\bigl(d(u(t_1)) + d(u(t_2))\bigr).
\]
\end{lemma}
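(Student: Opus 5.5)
We would prove the estimate with a localized virial identity, using the modulation analysis of Proposition~\ref{modulationprop} near the ground state and the compactness of Proposition~\ref{P:Precompactness} away from it. Fix a smooth radial weight $\phi_R(x)=R^2\phi(x/R)$ with $\phi(x)=|x|^2$ for $|x|\le 1$ and $\phi$ constant for $|x|\ge 2$, and set
\[
V_R(t)=2\Im\int \nabla\phi_R\cdot\nabla u\,\bar u\,dx.
\]
The first step is to compute $V_R'(t)$. Since $\nabla\phi_R=2x$ on $|x|\le R$ and the potential term $a|x|^{-2}$ is homogeneous of degree $-2$, the localized virial identity for $\H_a$ takes the form
\[
V_R'(t)=8\|u\|_{\dot H_a^1}^2-6\|u\|_{L^4}^4+A_R(u(t)),
\]
where $A_R$ collects errors supported in $|x|\ge R$. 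By the mass and energy constraints together with \eqref{pohozaev}, the main term equals
\[
8\|u\|_{\dot H^1_a}^2-6\|u\|_{L^4}^4=-4\|u\|_{\dot H_a^1}^2+24E_a(Q_a)=4\bigl(\|Q_a\|_{\dot H_a^1}^2-\|u\|_{\dot H_a^1}^2\bigr)=4d(u(t)),
\]
which uses that $\|u(t)\|_{\dot H_a^1}<\|Q_a\|_{\dot H^1_a}$ for all $t$ (Proposition~\ref{P:Coercivity}). The term $A_R$ is bounded by
\[
\int_{|x|>R}\bigl(|\nabla u|^2+|x|^{-2}|u|^2+|u|^4\bigr)\,dx.
\]
The sign of the potential-term error must be checked, but it is at worst bounded by this quantity.

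The main step is the bound $|A_R(u(t))|\le \tfrac12 d(u(t))$ for all $t\ge0$, with $R$ fixed large. We split according to whether $d(u(t))<\delta_0$.
\begin{itemize}
\item If $d(u(t))<\delta_0$, we write $u=e^{i\theta}[(1+\alpha)Q_a+h]$ as in Proposition~\ref{modulationprop}. Then $A_R$ vanishes at $Q_a$: the virial functional localized to $|x|\le R$ is exact for $Q_a$ up to its own tail. Expanding, $|A_R(u)|\lesssim |\alpha|\,o_R(1)+\|h\|_{H^1}\,o_R(1)+\|h\|_{H^1}^2$, where $o_R(1)$ comes from the exponential decay of $Q_a$ (Proposition~\ref{Expdecay}). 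Since $|\alpha|\sim\|h\|_{H^1}\sim d(u)$, this is at most $\tfrac12 d(u)$ once $\delta_0$ is small and $R$ is large.
\item If $d(u(t))\ge\delta_0$, we use pre-compactness: the tails $\int_{|x|>R}(|\nabla u|^2+|u|^2|x|^{-2}+|u|^4)$ are uniformly small over $t\ge0$ for $R$ large. Choosing $R$ so that this tail is at most $\tfrac12\delta_0$ gives the bound. The radial Gagliardo--Nirenberg inequality (Lemma~\ref{L:RadialGNI}) also handles the $L^4$ tail directly.
\end{itemize}
Consequently $V_R'(t)\ge 2d(u(t))$.

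The final step is to integrate over $[t_1,t_2]$, which gives $2\int_{t_1}^{t_2}d\le V_R(t_2)-V_R(t_1)$. It then suffices to show $|V_R(t)|\lesssim_R d(u(t))$. For $d(u(t))\ge\delta_0$ this is immediate, since $|V_R|\lesssim R\|u\|_{L^2}\|u\|_{\dot H^1}\lesssim R\lesssim_R d/\delta_0$. For $d(u(t))<\delta_0$, we insert the modulation decomposition. Because $Q_a$ is real, the contribution of $(1+\alpha)Q_a$ to $\Im\int\nabla\phi_R\cdot\nabla u\,\bar u$ vanishes, and the phase drops out of $\Im(\nabla u\,\bar u)$. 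Hence $|V_R|\lesssim R\bigl(\|h\|_{H^1}+\|h\|_{H^1}^2\bigr)\lesssim_R d(u(t))$.

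The main obstacle is the uniform control $|A_R|\le\tfrac12 d$ in the near-ground-state regime. It requires an exact cancellation of the zeroth- and first-order terms in $h$ up to $o_R(1)$ tail factors. For this we must verify that the localized virial functional of $Q_a$ vanishes up to exponentially small tails, and that its linearization at $Q_a$ is small in the same sense, using Proposition~\ref{Expdecay} for $Q_a$ away from the origin.
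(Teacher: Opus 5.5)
Your proposal is correct and follows essentially the paper's argument: a localized virial identity with main term $4d(u)$, the tail error controlled through the modulation decomposition when $d(u)$ is small and through $H^1$ precompactness otherwise, and the endpoint bound $|V_R(t)|\lesssim_R d(u(t))$. Your pointwise absorption $|A_R|\le\tfrac12 d$ is simply the paper's integrated $\tfrac{\epsilon}{\delta_1}\int d$ absorption done before integrating, and near $Q_a$ you use the $(\alpha,h)$ expansion, as in Lemma~\ref{L:UncVir}, instead of tails of $u$.

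The step you flag as the main obstacle is immediate, and the vanishing is exact. Since $Q_a$ is real, $P_R[e^{it}Q_a]\equiv0$, so $F_R[Q_a]=\frac{d}{dt}P_R[e^{it}Q_a]=0$, while $F_\infty[Q_a]=0$ by \eqref{pohozaev}; hence $A_R(e^{i\theta}Q_a)=0$ exactly. Exact vanishing is what your expansion needs: a zeroth-order error that were merely $o_R(1)$ could not be absorbed into $\tfrac12 d(u)$ as $d(u)\to0$.
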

\begin{proof} The proof follows along the lines of previous works such as \cite{DM, DR ,CM2023}, so we will keep our presentation brief.  We rely on the following virial identity (see \cite[Section 2.4]{KVMZ}): Let $R \in [1,\infty]$ and let $\phi$ be a real-valued radial function satisfying
\begin{align*}
\phi(x)=\begin{cases} |x|^2 & |x|\leq 1\\ \text{const.}& |x|>3,
\end{cases}\qtq{with} |\nabla \phi|\leq 2|x|,\quad |\partial_{jk}\phi|\leq 2.
\end{align*}
We then define
\begin{equation}\label{E:VirialQuantity}
P_R[u]=2\,\mathrm{Im}\int \overline{u}\,\nabla u\cdot\nabla w_R\,dx,
\end{equation}
where $w_R(x)=R^2\phi\!\left(\frac{x}{R}\right)$ and $w_\infty(x):=|x|^2$. We have the following identities:
\[
\tfrac{d}{dt} P_R[u(t)] = F_R[u(t)],
\]
where for finite $R$ we have
\[
F_R[u]=\int [(-\Delta^2 w_R)|u|^2
+4\,\mathrm{Re}\,\overline{u}_j u_k\,\partial_{jk}(w_R) +4a|u|^2\tfrac{x}{|x|^4}\cdot\nabla w_R
-|u|^4\Delta w_R]\,dx
\]
and (using the energy constraint and \eqref{pohozaev})
\[
F_\infty[u] = 8\bigl[\|u\|_{\dot H^1_a}^2-\tfrac{3}{4}\|u\|_{L^4}^4\bigr]
=4\,d(u(t)).
\]

We note that $P_R[e^{i\theta}Q_a]=0$ and $F_R[ e^{i \theta}Q_a] = 0$ for all $R \in [1, \infty]$ and $\theta \in \mathbb{R}$ (see e.g. \cite{CM2023, MiaoMurphyZheng2023}).  

Let $R \geq 1$ (which will be chosen large enough below) and $\delta_1 \in (0, \delta_0)$, where $\delta_0$ is as in Proposition~\ref{modulationprop}. We define the characteristic function 
\[
\chi = \chi_J,\quad J = \{ t \in [t_1,t_2] : d(u(t)) < \delta_1 \},
\]
and its complement $\chi^c = 1- \chi$. We write
\begin{align} \label{auxscat}
\tfrac{d}{dt} P_R[u] - F_\infty[u] &=  \chi^c(t) (F_R[u] - F_\infty[u]) \nonumber\\
&\quad + \chi(t) \bigl(F_R[u]-F_\infty[u] - (F_R[e^{i \theta(t)}Q_a]- F_\infty[e^{i\theta(t)}Q_a])\bigr).
\end{align}

We now estimate $P_R[u(t_j)]$ as well as the right-hand side of this equality.  We begin by fixing $j \in \{1,2\}$. If $d(u(t_j)) \geq \delta_1$ then 
\[
|P_R[u(t_j)] | \lesssim R \|u\|^2_{L^\infty_t H^1_x} \lesssim \tfrac{R}{\delta_1} d(u(t_j)).
\]
If instead $d(u(t_j)) < \delta_1$, we use the modulation analysis and estimate as follows: 
\begin{align*}
|P_R[u(t_j)] | &= \biggl| 2\,\Im \int \bigl( \bar{u}\nabla u - e^{-i \theta(t_j)}Q_a \nabla [e^{i\theta(t_j)}Q_a]\bigr) \cdot \nabla w_R \,dx \biggr| \\
&\lesssim R\bigl(\| \bar{u}\nabla (u(t_j)- e^{i\theta(t_j)}Q_a)\|_{L^1} + \|(\bar{u}- e^{-i\theta(t_j)}Q_a) \nabla e^{i\theta(t_j)}Q_a \|_{L^1}\bigr) \\
&\lesssim R\bigl(\|u\|_{L^2} + \| \nabla Q_a\|_{L^{\frac65}}\bigr) d(u(t_j)) \\
&\lesssim R \,d(u(t_j)).
\end{align*}

To estimate the right-hand side we begin with the $\chi^c$ term. As $d(u(t)) \geq \delta_1$ on the support of $\chi^c$, we rely directly on $H^1$ precompactness. We begin by writing
\begin{align*}
F_R[u] - F_\infty[u] = &\int_{|x|>R} (-\Delta^2 w_R)|u|^2 + 4\,\mathrm{Re}\, \overline{u_j} u_k\, \partial_{jk}(w_R) + 4a |u|^2 \tfrac{x}{|x|^4} \cdot\nabla w_R \\
&-|u|^4 \Delta w_R \,dx - 8\|u(t)\|_{\dot{H}^1_a(|x|>R)}^2 + 6 \|u(t)\|^4_{L^4(|x|>R)}.
\end{align*}
Now, given $\ep>0$, we choose $R>1$ large enough such that all the above integrals are bounded by $\ep$ uniformly for $t \in [t_1,t_2]\backslash J$. Thus, we find
\[
\bigl|\chi^c(t)[F_R[u] - F_\infty[u]] \bigr| < \tfrac{\ep}{\delta_1} d(u(t)).
\] 

Next, we write $Q_a(t) = e^{i\theta(t)}Q_a$ and expand the term involving $\chi$ as
\begin{align*}
&-8\bigl[\|u(t)\|_{\dot{H}^1_a(|x|>R)}^2-\|Q_a(t)\|_{\dot{H}^1_a(|x|>R)}^2\bigr] + 6 \bigl[\|u(t)\|^4_{L^4(|x|>R)}-\|Q_a(t)\|^4_{L^4(|x|>R)}\bigr] \\
&\quad + \int_{|x|>R} 4\,\Re\bigl[\overline{u_j}u_k -\overline{Q_a(t)}_jQ_a(t)_k\bigr] \partial_{jk}(w_R) - \bigl[|u|^2-|Q_a(t)|^2\bigr]\Delta^2w_R \,dx \\
&\quad - \int_{|x|>R} \bigl[|u|^4 - |Q_a(t)|^4\bigr] \Delta w_R -4a\bigl[|u|^2-|Q_a(t)|^2\bigr] \tfrac{x}{|x|^4} \cdot \nabla w_R \,dx.
\end{align*}
In each term we can exhibit the difference $u(t)-Q_a(t)$, which we estimate in $H^1$ by $d(u(t))$ (cf. Proposition~\ref{modulationprop}).  The remaining terms contain either $u$ or $Q_a$ norms on $|x|>R$. By the compactness, we can take $R$ large enough such that these norms are all $\mathcal{O}(\ep)$. For example, 
\begin{align*}
\left| \int_{|x|>R} \bigl[|u|^4- |Q_a(t)|^4\bigr] dx \right| &\lesssim \int_{|x|>R} (|u|^3 + |Q_a(t)|^3) |u-Q_a(t)| \,dx \\
&\lesssim\bigl(\|u\|^3_{L^4(|x|>R)} + \|Q_a(t)\|^3_{L^4(|x|>R)}\bigr) \|u-Q_a(t)\|_{L^4} \\
&\lesssim \ep\, d(u(t)).
\end{align*}
In this way we obtain 
\[
\bigl|  \bigl(F_R[u]-F_\infty[u] - (F_R[e^{i \theta(t)}Q_a]- F_\infty[e^{i\theta(t)}Q_a])\bigr) \bigr| \lesssim \ep\, d(u(t))\qtq{for}t\in J.
\]

We now integrate \eqref{auxscat} over $[t_1,t_2]$, using  $F_\infty[u] = 4d(u(t))$, $\delta_1 \ll 1$, and the estimates above to obtain 
\[
\int_{t_1}^{t_2} d(u(t))\, dt  \leq \tfrac{CR}{\delta_1} \bigl[d(u(t_1)) + d(u(t_2))\bigr] + \tfrac{\ep}{\delta_1} \int_{t_1}^{t_2} d(u(t))\, dt.
\]
Choosing $\ep=\ep(\delta_1)$ small enough we obtain the desired estimate.  \end{proof}

Applying the previous lemma on the interval $[0,T]$ and using uniform boundedness of $\delta(u(t))$, we obtain:

\begin{cor}\label{finiteint} Let $u$ be as in Proposition~\ref{compact_constrained}. Then
\[
\int_0^{\infty} d(u(t)) \, dt < \infty.
\]
\end{cor}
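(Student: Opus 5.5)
The plan is to apply Lemma~\ref{virialscat} on $[0,T]$ and let $T\to\infty$. For any $T>0$, taking $t_1=0$ and $t_2=T$ gives
\[
\int_0^T d(u(t))\,dt \leq C\bigl(d(u(0))+d(u(T))\bigr),
\]
where $C$ is independent of $T$. It therefore suffices to bound $d(u(t))$ uniformly in $t\in[0,\infty)$.

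For the uniform bound, I would use the definition $d(u(t))=\bigl|\|u(t)\|_{\dot H^1_a}^2-\|Q_a\|_{\dot H^1_a}^2\bigr|$ together with Proposition~\ref{P:Coercivity}(i). Since $M(u)=M(Q_a)$ and $\|u(0)\|_{\dot H^1_a}<\|Q_a\|_{\dot H^1_a}$, we have $\|u(0)\|_{L^2}\|u(0)\|_{\dot H^1_a}<\mathcal{K}_a$. The proposition then gives that $u$ is global and $\|u(t)\|_{L^2}\|u(t)\|_{\dot H^1_a}<\mathcal{K}_a$ for all $t$. By mass conservation this means $\|u(t)\|_{\dot H^1_a}<\|Q_a\|_{\dot H^1_a}$. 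Hence
\[
0\le d(u(t))=\|Q_a\|_{\dot H^1_a}^2-\|u(t)\|_{\dot H^1_a}^2\le \|Q_a\|_{\dot H^1_a}^2.
\]
(Alternatively, the $H^1$ precompactness from Proposition~\ref{P:Precompactness} gives boundedness directly.)

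Combining the two steps, $\int_0^T d(u(t))\,dt\le 2C\|Q_a\|_{\dot H^1_a}^2$ for every $T>0$. Since $d\ge0$, monotone convergence as $T\to\infty$ gives $\int_0^\infty d(u(t))\,dt<\infty$.

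There is no real obstacle here; all the substance is already contained in Lemma~\ref{virialscat}. The only point to check is that its constant $C$ is independent of the interval $[t_1,t_2]$, which holds because $R$, $\delta_1$, and $\varepsilon$ were chosen using only the precompactness of the full forward orbit.
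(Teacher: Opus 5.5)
Your proof is correct and follows the paper's argument: apply Lemma~\ref{virialscat} on $[0,T]$, bound $d(u(t))$ uniformly in $t$, and let $T\to\infty$. Your justification of the uniform bound via Proposition~\ref{P:Coercivity}(i), giving $d(u(t))\le\|Q_a\|_{\dot H^1_a}^2$, supplies the detail the paper leaves implicit.
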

We now have all the ingredients to prove Proposition \ref{compact_constrained}.

\begin{proof}[Proof of Proposition \ref{compact_constrained}.] Combining Lemma~\ref{virialscat}, Corollary~\ref{finiteint}, and Corollary~\ref{convofdistance}, we obtain 
\[
\int_t^\infty d(u(s)) \, ds \leq C d(u(t))
\]
for all $t>0$. Thus Gronwall's inequality implies 
\[
\int_t^\infty d(u(s)) \, ds \lesssim e^{-ct}
\]
for some $c>0$ and all $t>0$. Finally, we apply Corollary~\ref{convofdistance} to obtain
\[
\|u(t) - e^{i(t+ \theta_0)} Q_a\|_{H^1} \lesssim e^{-ct}
\]
for some $\theta_0\in\R$ and all $t>0$.\end{proof}

\section{Convergence for global unconstrained solutions}\label{S:Unconstrained}

In this section we prove an analogue of Proposition~\ref{compact_constrained} in the case of a forward global \emph{unconstrained} solution.  In this case, we cannot obtain compactness.  Instead we will rely heavily on radiality through the radial Gagliardo--Nirenberg inequality (see Lemma~\ref{L:RadialGNI}). 

\begin{prop}\label{P:UncSolConv}
Suppose $u$ is a radial, forward-global solution to \eqref{NLSa} with 
\[
M(u)=M(Q_a),\quad E_a(u)=E_a(Q_a),\quad \|u_0\|_{\dot{H}_a^1}>\|Q_a\|_{\dot{H}_a^1}.
\]
Then there exists $c>0$ and $\theta_0\in \R$ such that
\[
\|u(t)-e^{i(t+\theta_0)}Q_a\|_{H^1}\lesssim e^{-ct}\qtq{for all}t\geq 0.
\]
\end{prop}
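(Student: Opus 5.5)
We follow the Duyckaerts--Roudenko strategy for unconstrained threshold solutions, replacing the finite-variance virial argument by a localized virial estimate controlled through Lemma~\ref{L:RadialGNI}. The aim is the analogue of Lemma~\ref{virialscat}: for all $0\le t_1<t_2$,
\[
\int_{t_1}^{t_2} d(u(t))\,dt\lesssim d(u(t_1))+d(u(t_2)).
\]
Once this holds, the argument ending the proof of Proposition~\ref{compact_constrained} (Corollary~\ref{finiteint}, Corollary~\ref{convofdistance}, Gronwall) applies verbatim and yields the exponential convergence.

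\emph{Step 1 (sign of the virial and large-$d$ regime).} By Proposition~\ref{P:Coercivity}(iii), $\|u(t)\|_{\dot H^1_a}>\|Q_a\|_{\dot H^1_a}$ for all $t\ge0$. Combining the mass and energy constraints with \eqref{pohozaev}, we get
\[
F_\infty[u]=8\|u\|_{\dot H^1_a}^2-6\|u\|_{L^4}^4=-4\bigl(\|u\|_{\dot H^1_a}^2-\|Q_a\|_{\dot H^1_a}^2\bigr)=-4d(u),
\]
which is strictly negative. For large $\|u\|_{\dot H^1_a}$, the energy identity $\|u\|_{L^4}^4=2\|u\|_{\dot H^1_a}^2-4E_a(Q_a)$ shows in fact that $F_\infty[u]\lesssim -\|u\|_{\dot H^1_a}^2$.

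\emph{Step 2 (localized virial errors).} For finite $R$, the terms of $F_R[u]-F_\infty[u]$ supported in $|x|>R$ are bounded by $R^{-2}\|u\|_{L^2}^2$, by the nonnegative kinetic terms (these have the favorable sign since $\partial_{jk}w_R\le 2\delta_{jk}$ for a suitable choice of $\phi$), and by $\|u\|_{L^4(|x|>R)}^4\lesssim R^{-2}\|u\|_{\dot H^1_a}^3$ from Lemma~\ref{L:RadialGNI}. This gives
\[
F_R[u]\le -4d(u)+C R^{-2}\bigl(1+\|u\|_{\dot H^1_a}^{3}\bigr).
\]
The cubic power is the obstacle. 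I plan to use the more refined radial bound $\|u\|_{L^\infty(|x|>R)}\lesssim R^{-1}\|u\|_{L^2}^{1/2}\|\nabla u\|_{L^2}^{1/2}$, so that $\|u\|_{L^4(|x|>R)}^4\lesssim R^{-2}\|u\|_{L^2}^{3}\|u\|_{\dot H^1_a}$. This makes the error linear in the kinetic energy and hence absorbable into $-4d(u)\sim -\|u\|_{\dot H^1_a}^2$ in the large-$d$ regime.

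\emph{Step 3 (case split in $d$).} Where $d(u(t))\ge\delta_1$, Step 2 gives $F_R\le -2d(u)$ once $R\gg\delta_1^{-1/2}$. Where $d(u(t))<\delta_1$, we use Proposition~\ref{modulationprop} and subtract $F_R[e^{i\theta}Q_a]=F_\infty[e^{i\theta}Q_a]=0$, as in Lemma~\ref{virialscat}. The remaining differences exhibit $u-e^{i\theta}Q_a=O(d)$ in $H^1$ multiplied by tails of $Q_a$ or $u$ on $|x|>R$. Here compactness is unavailable, but in this regime $u$ is $H^1$-close to $e^{i\theta}Q_a$, so the tails of $u$ are controlled by those of $Q_a$ (Proposition~\ref{Expdecay}) plus $O(d)$. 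This gives an error $\lesssim (o_R(1)+\delta_1)d(u)$.

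\emph{Step 4 (integration).} The endpoint bound $|P_R[u(t_j)]|\lesssim R\,d(u(t_j))$ holds in the near regime exactly as in Lemma~\ref{virialscat}, and in the far regime we use $|P_R|\lesssim R\|u\|_{L^2}\|u\|_{\dot H^1_a}\lesssim R(1+d)$, which is $\lesssim R\,d/\delta_1$ there. Integrating $\tfrac{d}{dt}P_R\le -c\,d(u)$ over $[t_1,t_2]$ gives the claimed estimate.

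Finiteness of $\int_0^\infty d$ follows by sending $t_2\to\infty$, and this requires $d(u(t))$ to stay bounded along a sequence $t_n\to\infty$. If instead $d(u(t))\to\infty$, the differential inequality $\tfrac{d}{dt}P_R\le -c\,\|u\|_{\dot H^1_a}^2$ together with $|P_R|\lesssim R\|u\|_{\dot H^1_a}$ forces blowup in finite time, contradicting forward globality. This Glassey-type step, made rigorous with the variable-$R$ virial, is where I expect the most care to be needed.
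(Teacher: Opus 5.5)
Your proposal is correct. Its core matches the paper's Lemma~\ref{L:UncVir}: you derive the same localized virial inequality $\frac{d}{dt}P_R[u]\le -c\,d(u(t))$ from the same far/near split, namely a nonpositive kinetic tail, the $R^{-2}\|u\|_{L^2}^2$ term, and modulation plus the tails of $Q_a$ when $d<\delta_1$. You also use the same Strauss-type bound $\|u\|_{L^4(|x|>R)}^4\lesssim R^{-2}\|u\|_{L^2}^3\|u\|_{\dot H^1_a}$, which is linear in the kinetic energy. That is exactly what the paper's proof uses, even though the displayed statement of Lemma~\ref{L:RadialGNI} has the exponents transposed.

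You genuinely diverge in the endgame. You turn the differential inequality into the two-sided bound $\int_{t_1}^{t_2}d\lesssim d(t_1)+d(t_2)$ via endpoint bounds on $|P_R|$. You then split into two cases: $d$ bounded along a sequence $t_n\to\infty$, or $d\to\infty$, and rule out the second by a Glassey-type argument.

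The paper keeps the one-sided form $\int_{t_1}^{t_2}d\le C(P_R[u(t_1)]-P_R[u(t_2)])$. It proves $P_R[u(t)]\ge 0$ for all $t\ge0$ using $\frac{d}{dt}\int|u|^2w_R\,dx=P_R[u]$: a nonincreasing $P_R$ that dropped below $-\eta$ would make the nonnegative localized variance decrease linearly forever. This gives $\int_0^\infty d\le C\,P_R[u(0)]$ with no case distinction. Then $\lim_{t\to\infty}P_R=0$ together with \eqref{E:UncPRBnd} yields \eqref{E:UncVir2}.

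The paper's route is shorter, and it also produces the positivity \eqref{e:UncPPos}. That positivity is reused in Corollary~\ref{C:FiniteVir} and in the negative-time blowup argument for $Q_a^+$. Your route keeps the structure parallel to Lemma~\ref{virialscat} at the cost of the extra case analysis.

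Your Glassey step is sound and needs no variable-$R$ refinement. With $R$ fixed, $a>0$ gives $|P_R[u]|\lesssim R\|u\|_{L^2}\|\nabla u\|_{L^2}\lesssim R\|u\|_{\dot H^1_a}$. Once $d\ge\delta_1$ (so $\|u\|_{\dot H^1_a}^2\lesssim_{\delta_1} d$) and $P_R<0$, the quantity $y=-P_R$ satisfies $y'\gtrsim R^{-2}y^2$ and blows up in finite time. Alternatively, the paper's variance argument disposes of the case $d\to\infty$ in one line, since there $P_R\to-\infty$.
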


Recalling the notation from Section~\ref{S:Constrained}, we first prove an analogue of Lemma~\ref{virialscat}.
\begin{lemma}\label{L:UncVir} Let $u$ be as in Proposition~\ref{P:UncSolConv}. There exists $R$ sufficiently large such that for $[t_1, t_2] \subset [0, \infty)$,
\begin{equation}\label{E:UncVir}
\int_{t_1}^{t_2} d(u(t))\,dt\leq C\bigl(P_R[u(t_1)]-P_R[u(t_2)]\bigr).
\end{equation}
Furthermore,
\begin{equation}\label{E:UncVir2}
\int_t^\infty d(u(s))\,ds\lesssim d(u(t)).
\end{equation}
\end{lemma}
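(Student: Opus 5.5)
We follow the same virial scheme as in Lemma~\ref{virialscat}, replacing precompactness with the radial Gagliardo--Nirenberg inequality (Lemma~\ref{L:RadialGNI}) to control the tail $|x|>R$. For unconstrained solutions, Proposition~\ref{P:Coercivity}(iii) gives $\|u(t)\|_{\dot H^1_a}>\|Q_a\|_{\dot H^1_a}$ for all $t$. Together with $E_a(u)=E_a(Q_a)$ and \eqref{pohozaev}, this yields
\[
F_\infty[u]=8\bigl[\|u\|_{\dot H^1_a}^2-\tfrac34\|u\|_{L^4}^4\bigr]=-4\,d(u(t))<0 .
\]
The sign is reversed relative to the constrained case, which is why \eqref{E:UncVir} involves $P_R[u(t_1)]-P_R[u(t_2)]$.

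\textbf{Step 1: tail control when $d(u(t))\ge\delta_1$.} On this set we must show $|F_R[u]-F_\infty[u]|\le \tfrac12\cdot 4d(u(t))$ for $R$ large. The error consists of three kinds of terms.
\begin{itemize}
\item Nonnegative derivative terms on $|x|>R$. These come from $\partial_{jk}w_R$ and the potential term, with the sign arising from the convexity and radial structure of $\phi$. Since they can be taken to have a favorable sign, we choose $\phi$ so that $F_R[u]\le 8\|u\|_{\dot H^1_a}^2-6\|u\|_{L^4}^4+\text{errors}$.
\item Mass terms $R^{-2}\|u\|_{L^2}^2$.
\item Quartic tail terms $\|u\|_{L^4(|x|>R)}^4\lesssim R^{-2}\|u\|_{\dot H^1_a}^3$.
\end{itemize}
Since $\|u\|_{\dot H^1_a}^2\le d(u)+\|Q_a\|_{\dot H^1_a}^2\lesssim d(u)/\delta_1$ on this set, all errors are bounded by $CR^{-2}\delta_1^{-1}\,d(u)(1+d(u)^{1/2})$.

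The main obstacle is that $d(u)$ need not be bounded, so the cubic power must be absorbed. To handle this, I would write $\|u\|_{\dot H^1_a}^3\lesssim \|u\|_{\dot H^1_a}^2\cdot\|u\|_{\dot H^1_a}$ and use the good kinetic term: arrange $F_R[u]\le -c\|u\|_{\dot H^1_a(|x|>R)}^2$-type terms plus $-4d(u)$. Alternatively, use the localized Gagliardo--Nirenberg with $\|u\|_{L^2(|x|>R)}\le\|u\|_{L^2}$ and choose $R$ so that $CR^{-2}\|u\|_{\dot H^1_a}^3\le \tfrac12\|u\|_{\dot H^1_a}^2$ whenever $\|u\|_{\dot H^1_a}\lesssim R^2$. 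In the complementary regime, $d(u)\sim\|u\|_{\dot H^1_a}^2$ dominates, and $\|u\|_{L^4}^4$ is controlled through the energy identity $\tfrac14\|u\|_{L^4}^4=\tfrac12\|u\|_{\dot H^1_a}^2-E_a(Q_a)$, so the quartic term is $\lesssim R^{-2}$ times the full $\dot H^1_a$ norm squared.

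\textbf{Step 2: near the ground state, $d(u(t))<\delta_1$.} Use the modulation of Proposition~\ref{modulationprop} exactly as in Lemma~\ref{virialscat}. Subtract $F_R[e^{i\theta}Q_a]=F_\infty[e^{i\theta}Q_a]=0$, exhibit the difference $u-e^{i\theta}Q_a=\mathcal O(d)$ in $H^1$, and bound the remaining tails by the exponential decay of $Q_a$ (Proposition~\ref{Expdecay}). On $|x|>R$, $u$ is within $\mathcal O(\delta_1)$ of $Q_a$, so all tails are $o_R(1)$ uniformly. This gives an error $\le \epsilon\, d(u)$.

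\textbf{Step 3: conclusion.} Combining Steps 1 and 2 gives $\tfrac{d}{dt}P_R[u]\le -2d(u(t))$. Integrating over $[t_1,t_2]$ yields \eqref{E:UncVir}.

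For \eqref{E:UncVir2}, first note that $P_R[u]$ is nonincreasing. Moreover $|P_R[u]|\lesssim R\|u\|_{L^2}\|u\|_{\dot H^1_a}$, and, as in the proof of Lemma~\ref{virialscat}, $|P_R[u(t)]|\lesssim_{R,\delta_1} d(u(t))$ in both regimes (near $Q_a$ via modulation, and away via $\|u\|_{\dot H^1_a}\lesssim d^{1/2}+1\lesssim d/\delta_1$).

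Next we show $P_R[u(t)]\ge 0$ for all $t$. If instead $P_R[u(t_0)]<0$ at some $t_0$, monotonicity gives $P_R[u(t)]\le -\eta<0$ for $t\ge t_0$. Combining this with $|P_R|\lesssim d$ shows $d(u(t))\gtrsim\eta$ for all $t\ge t_0$, and then \eqref{E:UncVir} gives $P_R[u(t)]\to-\infty$ linearly in $t$. But $|P_R|\lesssim R\|u\|_{\dot H^1_a}$, so $\|u\|_{\dot H^1_a}$ grows linearly. This contradicts \eqref{E:UncVir} once more: $\int d\sim\int\|u\|_{\dot H^1_a}^2$ grows quadratically while $|P_R|$ grows only linearly.

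Hence $P_R\ge0$, and letting $t_2\to\infty$ in \eqref{E:UncVir} gives $\int_t^\infty d(u(s))\,ds\le C\,P_R[u(t)]\lesssim d(u(t))$, which is \eqref{E:UncVir2}.
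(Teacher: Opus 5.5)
Your overall architecture (splitting by $\chi_J$, modulation near $e^{i\theta}Q_a$, monotonicity of $P_R$, positivity of $P_R$, then $t_2\to\infty$) is the paper's, but Step~1 has a genuine gap. The ``main obstacle'' you identify is a cubic power of $\|u\|_{\dot H^1_a}$ in the quartic tail while $d(u)$ is unbounded. It comes from taking Lemma~\ref{L:RadialGNI} as printed, where the exponents are swapped (the printed version is not scale invariant). Neither of your workarounds removes it:
\begin{itemize}
\item Using the good kinetic term cannot absorb a cubic power of the kinetic energy into a quadratic good term.
\item Splitting at $\|u\|_{\dot H^1_a}\sim R^2$ fails in the complementary regime. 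There the energy identity only gives $\|u\|_{L^4(|x|>R)}^4\le\|u\|_{L^4}^4=2d(u)+\tfrac43\|Q_a\|_{\dot H^1_a}^2$, with no factor $R^{-2}$. Since $F_R[u]-F_\infty[u]$ contains $+6\int_{|x|>3R}|u|^4\,dx$, the resulting error bound has size $12\,d(u)+O(1)$, which does not beat the good term $-4d(u)$.
\end{itemize}
So as written you do not obtain $\tfrac{d}{dt}P_R[u]\le -c\,d(u)$ when $\|u\|_{\dot H^1_a}$ is large. That is exactly the regime you cannot exclude a priori for a forward-global unconstrained solution.

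The missing input is the correct, scale-invariant radial inequality, which is what the paper's proof actually uses. Strauss' lemma $|f(x)|\lesssim|x|^{-1}\|f\|_{L^2}^{1/2}\|\nabla f\|_{L^2}^{1/2}$ gives
\[
\|u\|_{L^4(|x|>R)}^4\le\|u\|_{L^\infty(|x|>R)}^2\|u\|_{L^2}^2\lesssim R^{-2}\|u\|_{L^2}^3\|u\|_{\dot H^1_a}\lesssim R^{-2}\bigl(1+d(u)\bigr)^{1/2}.
\]
This is $\lesssim R^{-2}\delta_1^{-1}d(u)$ on $\{d\ge\delta_1\}$, and with it Step~1 closes with no case distinction.

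There is a smaller issue in your positivity argument. The final contradiction (``$|P_R|$ grows only linearly'') is unjustified, since nothing bounds $|P_R|$ above by a linear function. You can repair it with $|P_R|\lesssim R\|u\|_{\dot H^1_a}$, which gives $\tfrac{d}{dt}P_R\le -c\,d(u)\le -c'R^{-2}P_R^2+C$. This Riccati inequality forces $P_R$ to blow up in finite time, contradicting forward globality. Alternatively, argue as the paper does: $\tfrac{d}{dt}\int|u|^2w_R\,dx=P_R[u]\le-\eta$ would drive a nonnegative quantity negative. Your final deduction of \eqref{E:UncVir2} from $P_R\ge0$ by letting $t_2\to\infty$ is correct, and slightly more direct than the paper's identification of $\lim_{t\to\infty}P_R[u(t)]=0$.
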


\begin{proof} We use the same notation for the virial quantities used in Lemma~\ref{virialscat}. We fix an interval $[t_1,t_2]$ and $\delta_1\in (0,\delta_0)$, where $\delta_0$ is as in Proposition~\ref{modulationprop}. We define
\[
J=\{t\in [t_1,t_2]:d(t)<\delta_1\},\quad \chi=\chi_J,\quad\chi^c=1-\chi.
\]
As in Lemma~\ref{virialscat}, we have
\begin{align}
\tfrac{d}{dt}P_R[u]-F_\infty[u]&=\chi^c(t)\{F_R[u]-F_\infty[u]\}\label{E:UncVirialMain1}\\
&\quad+\chi(t)\{F_R[u]-F_\infty[u]-(F_R[e^{i\theta(t)}Q_a]-F_\infty[e^{i\theta(t)}Q_a])\}\label{E:UncVirialMain2}.
\end{align}

For \eqref{E:UncVirialMain1}, we write
\begin{align}
F_R[u]-F_\infty[u]&= \int_{|x|>R}4[\Re\bar{u}_j\partial_{jk}w_Ru_k]-8|\nabla u|^2\,dx\label{E:UncVir1.1}\\
&\quad +\mathcal{O}\biggl(\int_{|x|>R}R^{-2}|u|^2+|u|^4\,dx\biggr)\label{E:UncVir1.2}.
\end{align}
The properties of the virial weight $\phi$ guarantee that $\eqref{E:UncVir1.1}\leq 0$. For the first term in \eqref{E:UncVir1.2} we use the support properties of $\chi^c$ to estimate
\[
R^{-2}\|u\|_{L^2}^2\lesssim R^{-2}\tfrac{d(t)}{\delta_1}.
\]
For the second term in \eqref{E:UncVir1.2} we instead use the radial Gagliardo-Nirenberg inequality Lemma~\ref{L:RadialGNI} to obtain
\begin{align*}
\|u\|_{L^4(|x|>R)}^4&\lesssim R^{-2}\|u\|_{\dot{H}_a^1}\|u\|_{L_x^2}^3\\
&\lesssim R^{-2}\sqrt{d(u(t))+1}\lesssim R^{-2}d(u(t))(\delta_1^{-1}+\delta_1^{-2})^{\frac12}
\end{align*}
Thus
\[
|\eqref{E:UncVirialMain1}|\lesssim \tfrac{1}{\delta_1R^2}d(u(t)).
\]
We turn to \eqref{E:UncVirialMain2}.  We first write
\begin{align}
F_R[u]&-F_\infty[u]-(F_R[e^{i\theta(t)}Q_a]-F_\infty[e^{i\theta(t)}Q_a])\nonumber\\
&=\int_{|x|>R}4\text{Re}[\partial_{jk}w_R(\overline{u}_ju_k-\partial_jQ_a\partial_kQ_a)]-8[|\nabla u|^2-|\nabla Q_a|^2]\,dx\label{E:UncVir2.1}\\
&\quad+\int_{|x|>R}4[|u|^2-|Q_a|^2]\tfrac{ax}{|x|^4}\cdot\nabla w_R-8[|u|^2-|Q_a|^2]\tfrac{a}{|x|^2}\,dx\label{E:UncVir2.2}\\
&\quad+\mathcal{O}\biggl(\int_{|x|>R}R^{-2}(|u|^2-|Q_a|^2)+(|u|^4-|Q_a|^4)\,dx\biggr).\label{E:UncVir2.3}
\end{align}

For $t\in J$ we may use Proposition~\ref{modulationprop} to decompose
\[
u(t)=e^{i\theta(t)}[(1+\alpha(t))Q_a+h(t)].
\]
Writing $Q_a(t) = e^{i\theta(t)}Q_a$, we then estimate \eqref{E:UncVir2.1} as follows: 
\begin{align*}
|\eqref{E:UncVir2.1}|&\lesssim\int_{|x|>R}(2|\alpha|+|\alpha|^2)|\nabla Q_a|^2+(1+|\alpha|)|\nabla h|\,|\nabla Q_a(t)|+|\nabla h|^2\,dx\\
&\lesssim d(u(t))\|Q_a\|_{H_x^1(|x|>R)}^2+(1+d(u(t)))d(u(t))\|Q_a\|_{H_x^1(|x|>R)}+[d(u(t))]^2\\
&\lesssim (o_R(1)+\delta_1)d(u(t)),
\end{align*}
where $o_R(1) = \|Q_a\|_{H^1(|x|>R)}\to 0$ as $R\to\infty$. We can similarly obtain
\begin{align*}
|\eqref{E:UncVir2.2} | + | \eqref{E:UncVir2.3}|&\lesssim R^{-2}\int_{|x|>R}(2|\al|+|\al|^2)|Q_a|^2+(1+|\al|)|h|\,|Q_a|+|h|^2\,dx\\
& \quad + \int_{|x|>R} |\alpha| |Q_a|^4 + |h|^4 + |Q_a|^3 |h|\,dx \\
&\lesssim o_R(1) d(u(t))+[d(u(t))]^2 + [d(u(t))]^4 \\
&\lesssim (o_R(1)+\delta_1)d(u(t)),
\end{align*}
where again $o_R(1)$ represents norms of $Q_a$ at large radii that vanish as $R\to\infty$.

Collecting the estimates above, we obtain
\begin{align*}
|\eqref{E:UncVirialMain1}|+|\eqref{E:UncVirialMain2}|\lesssim \bigl(\tfrac{1}{\delta_1R^{2}}+o_R(1)+\delta_1\bigr)d(u(t)).
\end{align*}
As $F_\infty[u]=-4d(u(t))$, we can then choose $\delta_1$ sufficiently small and $R=R(\delta_1,Q_a)$ sufficiently large to guarantee that
\begin{equation}\label{E:UncDerivBnd}
\tfrac{d}{dt}P_R[u(t)]\leq -cd(u(t))
\end{equation}
for some $c>0$. Integrating over $[t_1,t_2]$, we obtain \eqref{E:UncVir}.

It remains to prove \eqref{E:UncVir2}.  We begin by showing that
\begin{equation}\label{e:UncPPos}
P_R[u(t)]\geq 0\qtq{for all}t\geq 0.
\end{equation}
Suppose instead that $P_R[u(t_0)]<-\eta$ for some $\eta>0$ and $t_0\geq 0$, then by \eqref{E:UncDerivBnd} and Proposition~\ref{P:Coercivity} we can derive that $P_R[u(t)]<-\eta$ for all $t>t_0$. Using \eqref{NLSa}, this implies
\[
\int|u(t,x)|^2w_R\,dx\leq \int|u(t_0,x)|^2w_R\,dx-\eta(t-t_0),
\]
yielding a contradiction if $t$ is sufficiently large. 

We next show that
\begin{equation}\label{E:UncPRBnd}
P_R[u(t)]\lesssim Rd(u(t))\qtq{for all}t\geq 0.
\end{equation}
If $d(u(t))>\delta_1$ we simply estimate
\[
P_R[u(t)]\lesssim R\|u\|_{H^1}^2\lesssim R[d(u(t))+\|Q_a\|_{H_a^1}^2]\lesssim R\bigl(1+\tfrac{1}{\delta_1}\|Q_a\|_{H_a^1}^2\bigr)d(u(t)).
\]
If instead $d(u(t))\leq d_1$, then we write
$u(t)=e^{i\theta(t)}[(1+\alpha(t))Q_a+h(t)]$ and use the fact that $P_R[e^{i\theta(t)}Q_a]\equiv 0$ to obtain
\begin{align*}
P_R[u(t)]&=|P_R[u(t)]-P_R[e^{i\theta(t)}Q_a]|\\
&\lesssim R\int|(1+\alpha(t))(Q_a\nabla h(t)+\nabla Q_ah(t))|+|\alpha(t)Q_a\nabla Q_a|\,dx\\
&\lesssim R(\|Q_a\|_{H_a^1}^2 [d(u(t))]^2+d(u(t))\|Q_a\|_{H_a^1}^2)\\
&\lesssim R(\delta_1+1)d(u(t)).
\end{align*}

Now, as $t\mapsto P_R[u(t)]$ is positive and decreasing, we may define $\ell=\displaystyle\lim_{t\to\infty}P_R[u(t)]$. As \eqref{E:UncVir} implies 
\[
\int_0^\infty d(u(t))\,dt<\infty,
\]
we must have $\ell=0$. Combining this fact with \eqref{E:UncVir} and \eqref{E:UncPRBnd} we obtain
\[
\int_t^\infty d(u(s))\,ds\lesssim d(u(t)),
\]
yielding \eqref{E:UncVir2} and completing the proof. \end{proof}

We now have all of the ingredients needed to prove Proposition~\ref{P:UncSolConv}.

\begin{proof}[Proof of Proposition~\ref{P:UncSolConv}]
By Lemma~\ref{L:UncVir} and Gronwall's inequality,
\[
\int_t^\infty d(u(s))\,ds\lesssim e^{-ct},
\]
for some $c>0$ and all $t$ large. Combining this Corollary~\ref{convofdistance}, we obtain
\[
\|u(t)-e^{i(t+\theta_0)}Q_a\|_{H_a^1}\lesssim e^{-ct},
\]
for some $\theta_0$ and $t$ sufficiently large, which implies the result.\end{proof}

\begin{cor}\label{C:FiniteVir}
If $u$ is as in Proposition~\ref{P:UncSolConv}, then in $xu(t)\in L^2$ for all $t\geq 0$.
\end{cor}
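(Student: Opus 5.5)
We need to show $xu(t)\in L^2$ for all $t\ge0$, with no weight assumption on the data. The approach is to use the localized virial quantities from Lemma~\ref{L:UncVir} and send $R\to\infty$. The key observation is that the weighted mass $V_R(t):=\int |u(t,x)|^2 w_R(x)\,dx$ satisfies $\tfrac{d}{dt}V_R = P_R[u(t)]$. This follows directly from \eqref{NLSa}, since the potential term is real and does not contribute to the mass current.

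From the proof of Lemma~\ref{L:UncVir} we have, for all $R$ sufficiently large:
\begin{itemize}
\item[(1)] $P_R[u(t)]\ge 0$ by \eqref{e:UncPPos};
\item[(2)] $t\mapsto P_R[u(t)]$ is nonincreasing by \eqref{E:UncDerivBnd};
\item[(3)] $P_R[u(t)]\to 0$ as $t\to\infty$.
\end{itemize}
Integrating $\tfrac{d}{dt}V_R = P_R$ on $[t,T]$ and using $V_R\ge0$ gives only $V_R(t)\le V_R(T)$, which is the wrong direction. I will therefore work with the exponential convergence from Proposition~\ref{P:UncSolConv}, made uniform in $R$.

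The first step is to bound $P_R$ uniformly in $R$. In \eqref{E:UncPRBnd} the bound $P_R\lesssim R\,d(u(t))$ carries a loss of $R$, because $|\nabla w_R|\lesssim R$. I will instead use $|\nabla w_R(x)|\le 2|x|$, so that
\[
|P_R[u(t)]|\lesssim \|xu(t)\|_{L^2(|x|\le 3R)}\,\|\nabla u(t)\|_{L^2}.
\]
This cannot close by itself, so a better route is needed.

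The better route exploits the fact that $P_R[e^{i\theta}Q_a]=0$ and that $Q_a$ decays exponentially by Proposition~\ref{Expdecay}, so $xQ_a\in L^2$. I write $V_R(t)=\int w_R|u|^2$ and compare it with $\int w_R Q_a^2$. I will differentiate $V_R$ and estimate $\int_t^\infty |P_R[u(s)]|\,ds$. For $s$ large, Proposition~\ref{P:UncSolConv} gives $\|u(s)-e^{i(s+\theta_0)}Q_a\|_{H^1}\lesssim e^{-cs}$. Then
\[
|P_R[u(s)]|=|P_R[u(s)]-P_R[e^{i(s+\theta_0)}Q_a]|\lesssim \int|\nabla w_R|\bigl(|u-Q|\,|\nabla u|+|Q|\,|\nabla(u-Q)|\bigr)\,dx .
\]
The term containing $Q$ is bounded by $\|xQ_a\|_{L^2}e^{-cs}$ uniformly in $R$. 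The term containing $|u-Q|$ is bounded by $\|x(u-Q)\|_{L^2(|x|\le3R)}\|\nabla u\|_{L^2}$. Setting $D_R(s):=\|\sqrt{w_R}\,(u(s)-e^{i(s+\theta_0)}Q_a)\|_{L^2}^2$ and writing $D_R'$ via the equation for the difference, I get an inequality of the form
\[
\bigl|\tfrac{d}{ds}\sqrt{D_R}\bigr|\lesssim e^{-cs}.
\]
This holds because the linear part $JL$ applied to the difference, paired against the weight $w_R$, produces only terms with $\nabla w_R$ times gradients, and those are $O(\|u-Q\|_{H^1})$ after using $|\nabla w_R|^2\lesssim w_R$. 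Since $D_R(s)\le CR^2 e^{-2cs}\to0$ for each fixed $R$, integrating from $s=\infty$ down to $t$ yields $D_R(t)\lesssim e^{-2ct}$ uniformly in $R$. Letting $R\to\infty$ gives $x(u(t)-e^{i(t+\theta_0)}Q_a)\in L^2$, and therefore $xu(t)\in L^2$ for $t$ large.

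The final step transfers this to all $t\ge0$. The standard finite-time virial bound $\tfrac{d}{dt}\|\sqrt{w_R}u\|_2\lesssim\|\nabla u\|_2$, which is uniform in $R$ because $|\nabla w_R|^2\lesssim w_R$, propagates finiteness of the weighted norm backward over any bounded interval. This gives $xu(t)\in L^2$ for all $t\ge0$.

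The main obstacle is the $R$-uniform differential inequality for $D_R$. The nonlinear terms and the $a|x|^{-2}$ potential must contribute only through commutators with $w_R$. This needs checking, though the potential commutes with multiplication by $w_R$, so it should be harmless.
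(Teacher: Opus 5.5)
There is a genuine problem, in two places: you discard the argument that actually proves the statement, and the key estimate in your replacement route is justified by a false claim.

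\textbf{The inequality you reject is the right direction, and it is the paper's whole proof.} By \eqref{e:UncPPos}, $P_R[u(t)]\ge 0$ for $R$ large, so $t\mapsto V_R(t)$ is nondecreasing and $V_R(t)\le V_R(T)$ for every $T\ge t$. For fixed $R$ the weight $w_R$ is bounded. The $L^2$ convergence of $u(T)$ to $e^{i(T+\theta_0)}Q_a$ therefore gives $V_R(T)\to\int w_R Q_a^2\,dx$ as $T\to\infty$. Integrating $|\nabla\phi|\le 2|x|$ radially gives $w_R\le|x|^2$, hence $V_R(t)\le\|xQ_a\|_{L^2}^2$ uniformly in $R$ and in $t\ge0$. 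Fatou's lemma as $R\to\infty$ then finishes the proof. An upper bound on $V_R(t)$ is exactly what monotone increase plus a known limit provides. No rate, no comparison with $Q_a$ inside the weight, and no backward propagation is needed.

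\textbf{Your replacement route does not work as written.} Set $h=e^{-i(t+\theta_0)}u-Q_a=h_1+ih_2$. Only the $a|x|^{-2}$ part contributes nothing, and that part is indeed harmless. The pairing of $w_Rh$ with $\mathcal{L}h$ contains the commutator term $\int\nabla w_R\cdot(h_1\nabla h_2-h_2\nabla h_1)\,dx$, but also $2\int w_RQ_a^2h_1h_2\,dx$. This term survives because the $Q_a^2$ coefficients in $L_+$ and $L_-$ are $3$ and $1$, so they do not cancel. In the pairing of $w_Rh$ with $iR(h)$, the quartic terms cancel, but $\int w_RQ_a h_2|h|^2\,dx$ survives. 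So it is false that these terms enter only through commutators with $w_R$.

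If you bound these terms by $\|Q_a\|_{L^\infty}^2D_R$, backward Gronwall produces a factor $e^{C(T-t)}$. The bound $D_R(T)\lesssim R^2e^{-2cT}$ cannot beat this unless $c>C$, so the $R$-uniform estimate does not close.

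The repair is to note that every such term carries at least one factor of $Q_a$. Since $w_R\le|x|^2$, and $|x|Q_a,\ |x|^2Q_a\in L^\infty$ by Propositions~\ref{Expdecay} and~\ref{boundsGS}, these terms are $O(\|h\|_{H^1}^2)=O(e^{-2cs})$ uniformly in $R$. This yields $|D_R'(s)|\lesssim\sqrt{D_R(s)}\,e^{-cs}+e^{-2cs}$; note this bounds $D_R'$, not $(\sqrt{D_R})'$. A supremum argument starting from $s=\infty$ then gives $D_R(t)\lesssim e^{-2ct}$ uniformly in $R$. Your final backward-propagation step, using $|\nabla w_R|^2\lesssim w_R$, is fine.

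With this fix your route works and yields more than the statement: an exponential rate for $\|x(u(t)-e^{i(t+\theta_0)}Q_a)\|_{L^2}$, without using the sign of $P_R$. It is, however, far more costly than the one-line monotonicity argument.
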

\begin{proof}
Define
\[
V_R[u(t)]=\int|u(t,x)|^2w_R\,dx.
\]
We proved above that for $R$ sufficiently large that
\[
\tfrac{d}{dt}V_R[u(t)]=P_R[u(t)]\geq 0,
\]
so that $t\mapsto V_R[u(t)]$ is nondecreasing. As 
\[
\|u(t)-e^{i(t+\theta)}Q_a\|_{H_a^1}\to 0\qtq{as}t\to\infty,
\]
we have
\[
V_R[u(t)]\leq V_R[Q_a]\leq \int|x Q_a|^2\,dx\qtq{for all}t\geq 0. 
\]
In particular, $V_R[u(0)]$ is uniformly bounded in $R$. Thus we can take the limit as $R\to\infty$ to deduce that $xu(0)\in L^2$, which implies the result. 
\end{proof}

%

\section{Proof of the main result}\label{S:Proof}

We are finally in a position to prove our main result, Theorem~\ref{T}. 

\begin{proof}[Existence of heteroclinic orbits] Recalling Proposition~\ref{spe}, we define $Q_a^\pm=U^{\pm1}$. By construction,
\[
\|Q^\pm\|_{\dot{H}_a^1}^2=\|Q_a\|_{\dot{H}_a^1}^2\pm2e^{-e_0t}\|Y_+\|_{\dot{H}_a^1}^2+\mathcal{O}(e^{-2e_0t}).
\]
In particular $\|Q_a^+(t)\|_{\dot{H}_a^1}>\|Q_a\|_{\dot{H}_a^1}$ and $\|Q_a^-(t)\|_{\dot{H}_a^1}<\|Q_a\|_{\dot{H}_a^1}$ for $t$ large.

Now, if $Q_a^-$ does not scatter backward in time, then Proposition~\ref{P:Precompactness} implies that the orbit $\{Q_a^-(t):t\in\R\}$ is precompact in $H^1$. But then by time reversal symmetry, Lemma~\ref{virialscat}, Corollary~\ref{finiteint}, and Corollary~\ref{convofdistance}, we have
\[
\int_{\R}d(Q_a^-(t))\,dt\lesssim \lim_{t\to\infty}d(Q_a^-(t))+\lim_{t\to-\infty}d(Q_a^-(t))=0.
\]
This implies that $d(Q_a^-(t))\equiv0$, contradicting that $\|Q_a^-(t)\|_{\dot{H}_a^1}<\|Q_a\|_{\dot{H}_a^1}$.

Next, we consider $Q_a^+$. As $Q_a^+$ is radial, Corollary~\ref{C:FiniteVir} implies $xQ_a^+\in L^2$. Moreover, by \eqref{e:UncPPos} we have that
\[
P_\infty[Q_a^+(0)]=2\Im\int Q_a^+(0)[x\cdot\overline{\nabla {Q_a^+}(0)}]\,dx\geq 0.
\]
In fact, since $\frac{d}{dt}P_\infty[Q_a^+(t)]<0$ we have that the quantity above is strictly positive. 

Now, if $Q_a^+$ fails to blow up in negative time, then we may apply the same argument to the time-reversed solution $\overline{Q_a^+}(t)=Q_a^+(-t)$ to obtain
\[
2\Im \int\overline{Q_a^+(0)}[x\cdot\nabla Q_a^+(0)]\,dx>0,
\]
a contradiction. \end{proof}

Finally, we prove the classification statement in Theorem~\ref{T}.

\begin{proof}[Classification of threshold solutions]  Let $u_0\in H_{\text{rad}}^1$ satisfy the mass-energy constraint
\[
M(u_0)E_a(u_0)=M(Q_a)E_a(Q_a)
\]
Using the scaling symmetry, we may in fact assume that
\[
M(u_0)=M(Q_a) \qtq{and} E_a(u_0)=E_a(Q_a). 
\]

First suppose $\|u_0\|_{\dot{H}_a^1}<\|Q_a\|_{\dot{H}_a^1}$.  If the corresponding solution $u$ does not scatter in both time directions, then by Proposition~\ref{P:Precompactness},  Proposition~\ref{compact_constrained}, Corollary~\ref{C:YouAreSpecial}, and Corollary~\ref{C:YouAreMoreSpecial}, we derive that $u=Q_a^-$ up to the symmetries of the equation.

Next suppose $\|u_0\|_{\dot{H}_a^1}>\|Q_a\|_{\dot{H}_a^1}$. If the corresponding solution $u$ does not blow up forward (say) in time, then by Proposition~\ref{P:UncSolConv}, Corollary~\ref{C:YouAreSpecial}, Corollary~\ref{C:YouAreMoreSpecial}, we derive that $u=Q_a^+$ up to the symmetries of the equation.  This completes the proof. \end{proof}

\bibliographystyle{plain}
\bibliography{biblio}
\nocite{*}
\end{document}